\documentclass[11pt]{amsart} 
\usepackage{hyperref}
  \usepackage{amsmath}
    \usepackage{amsthm}
    \usepackage{amssymb}
   \usepackage[dvipsnames]{xcolor}  
   \usepackage{graphicx}
    \usepackage{pdfsync}
    \usepackage{cleveref} 
\def\corON{\textcolor{purple}}
\def\corON{}
\usepackage{tikz}
    \usetikzlibrary{decorations.pathreplacing}
    \usetikzlibrary{decorations.pathmorphing}
\usetikzlibrary{snakes}
\usetikzlibrary{arrows.meta}

\def\corDB{}
\def\corOF{}

\newtheorem{theorem}{Theorem}[section]
\newtheorem{lemma}[theorem]{Lemma}
\newtheorem{corollary}[theorem]{Corollary}
\newtheorem{proposition}[theorem]{Proposition}
 

\setlength{\textwidth}{13cm} 
\newcommand{\eqnsection}{
   \renewcommand{\theequation}{\thesection.\arabic{equation}}
   \makeatletter
   \csname @addtoreset\endcsname{equation}{section} 
   \makeatother}

   

\def \be{\begin{equation}}
\def \ee{\end{equation}}
\def \bt{\begin{theorem}} 
\def \et{\end{theorem}}
\def \bl{\begin{lemma}} 
\def \el{\end{lemma}}
\def \bea{\begin{eqnarray}}
\def \eea{\end{eqnarray}}
\def \bas{\begin{eqnarray*}}
\def \eas{\end{eqnarray*}}


\renewcommand{\eqref}[1]{(\ref{#1})} 

\newcommand{\al}{\alpha} 
\newcommand{\bb}{\beta} 

\newcommand{\ga}{\gamma} 
\newcommand{\ep}{\epsilon} 
\newcommand{\vep}{\varepsilon}

\newcommand{\la}{\lambda}



\newcommand{\bE}{\mathbb{E}}

\newcommand{\bN}{\mathbb{N}}

\newcommand{\bP}{\mathbb{P}}
\newcommand{\bQ}{\mathbb{Q}}
\newcommand{\bR}{\mathbb{R}}

\newcommand{\bZ}{\mathbb{Z}}


\newcommand{\CC}{\mathcal{C}}

\newcommand{\FF}{\mathcal{F}}

\newcommand{\LL}{\mathcal{L}}

\newcommand{\TT}{\mathcal{T}}



\newcommand{\bs}{\mathbf{s}}






\newcommand{\wt}{\widetilde}
\newcommand{\wh}{\widehat}
\def \ff{\infty}






\DeclareMathOperator{\Var}{Var}

\DeclareMathOperator{\Cov}{Cov}





\def\b1{\mathbf 1}

\def \({\left(}
\def \){\right)}

\def \lc{\left\{}
\def \rc{\right\}}

\def \nn{\nonumber}

\def \bc{\begin{center} }
\def \ec{\end{center} }
\def \bs{\begin{slide} }
\def \es{\end{slide} }

\def\square{{\vcenter{\vbox{\hrule height.3pt
        \hbox{\vrule width.3pt height5pt \kern5pt
           \vrule width.3pt}
        \hrule height.3pt}}}}
\def\qed{{\hfill $\square$ \bigskip}}

\newcommand{\abbr}[1]{{\sc\lowercase{#1}}}

\eqnsection

\begin{document}
	
\title[Cover time of binary tree]{Limit law for the cover time\\
of a random walk on a binary tree}
\author[Amir Dembo\;\; Jay Rosen\;\; Ofer Zeitouni]
{Amir Dembo\;\;  Jay Rosen\;\; 
Ofer Zeitouni}

\date{June 16, 2019}

\thanks{Amir Dembo was partially supported by NSF grant DMS-1613091.}
\thanks{Jay Rosen was partially supported by the Simons Foundation.}
\thanks{Ofer Zeitouni was supported by the ERC advanced grant LogCorFields.}
\subjclass[2010]{60J80; 60J85; 60G50}
 \keywords{Cover time. Binary tree. Barrier estimates.}
\maketitle

\begin{abstract}
Let $\TT_n$ denote the binary tree of depth $n$ augmented by an extra edge connected to its root. Let  $\CC_n$ denote the cover time of $\TT_n$ by simple random walk. 
We prove that $\sqrt{ \mathcal{C}_{n}  2^{-(n+1) } } -   m_n$ converges in distribution as $n\to \infty$, where $m_n$ is an explicit constant, and identify the limit.
\end{abstract}

\section{Introduction}
We introduce in this section notation and our main results, provide background, and give a road map for the rest of the paper.

\subsection{Notation and main result}
Let $\TT_n$ denote the binary tree of depth $n$, whose only vertex of degree $2$  is  attached to an extra vertex 
$\rho$, called the  \abbr{root}.
The \abbr{cover time} $\CC_n$ of $\TT_n$ is
the number of steps of a (discrete time)  simple random walk started at $\rho$, till visiting 
all vertices of $\TT_n$. We write  \abbr{srw} for such a random walk. 
The main result of this paper is the following theorem, which gives convergence in law of a (normalized) version of the cover time $\CC_n$.
\begin{theorem}
\label{theo-covertime}
Let  $\CC'_n := 2^{-(n+1)} \CC_n$, and set
\begin{equation}
m_{n} : = \rho_{n} n\,, \quad
\rho_{n}: =  c_\ast-\frac{\log n}{c_\ast n}\,, \quad
 \quad c_\ast=\sqrt{2\log 2}\,.
\label{28.00}
\end{equation}
There exist a random variable $X'_\infty>0$ 
and $\alpha_{\ast}>0$ finite, so that, for any fixed $y\in \bR$,
\begin{align}
\lim_{n\to\ff} \bP \big(\sqrt{\CC'_n} -   m_n  \le y  \big)  =  \bE \(\exp\{-\alpha_{\ast} X'_\infty \mathrm{e}^{-c_{\ast}y}\} \) :=  \bP(Y'_\infty\leq y) \,.
\label{ct2.25b}
\end{align}
\end{theorem}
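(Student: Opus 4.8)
The plan is to pass from the cover time $\CC_n$ to the local-time field of the \abbr{srw} through the generalized second Ray--Knight isomorphism theorem, then exploit the branching structure of $\TT_n$ to reduce matters to a sharp ballot-type (barrier) estimate along a single ray, and finally to run a conditional first/second moment argument. \emph{Step 1 (from cover time to local times).} Let $\theta_u$ denote the inverse local time of the \abbr{srw} at $\rho$; since a vertex $x$ is visited by time $\theta_u$ exactly when $L^x_{\theta_u}>0$, we have $\{\CC_n\le\theta_u\}=\{\min_{x\in\TT_n}L^x_{\theta_u}>0\}$. By Kac's lemma $\theta_u$ concentrates around $2\,|E(\TT_n)|\,u\sim 2^{\,n+2}u$ with fluctuations of order $2^{(n+2)/2}\sqrt{u}$, negligible on the scale of $\sqrt{\CC'_n}-m_n$; hence, after a routine sandwiching comparison, it suffices to evaluate $\bP(\min_x L^x_{\theta_{u_n(y)}}>0)$ for $u_n(y):=\tfrac12(m_n+y)^2$, for which $L^\rho_{\theta_{u_n(y)}}=u_n(y)$ and $\sqrt{2\,L^\rho_{\theta_{u_n(y)}}}=m_n+y$ is the effective height at the root.

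\emph{Step 2 (isomorphism and branching picture).} The Ray--Knight theorem couples $(L^x_{\theta_u})_x$ with an independent Gaussian free field $(h_x)_x$ on $\TT_n$ pinned at $\rho$ via $\{L^x_{\theta_u}+\tfrac12 h_x^2\}\disteq\{\tfrac12(h_x+\sqrt{2u})^2\}$. On the binary tree this GFF is a branching random walk, while the local-time field itself, read along a ray $\rho=x_0,x_1,\dots,x_n$, is a critical branching chain started from $u$ whose square root behaves as a mean-zero random walk with asymptotically Gaussian steps, started at height $\sqrt{2u}=m_n+y$. Thus the event that $\TT_n$ is covered by $\theta_{u_n(y)}$ is the event that a branching process started at height $m_n+y$ at the root stays positive along every one of its $2^n$ leaf-rays; the squared-Bessel-type degeneracy of this process near height $0$ is precisely what replaces the usual $\tfrac{3}{2c_\ast}\log n$ log-correction of the branching-random-walk maximum by the $\tfrac1{c_\ast}\log n$ term present in $m_n$.

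\emph{Step 3 (a conditional Poisson limit).} Let $N$ count the leaves $x$ with $L^x_{\theta_{u_n(y)}}=0$. Fix a cutoff generation $k_0=k_0(n)\to\infty$ with $k_0=o(n)$ and condition on the field $\FF_{k_0}$ at generation $k_0$. For a leaf below a generation-$k_0$ vertex $z$, a sharp barrier estimate for the single-ray walk on the generations $[k_0,n]$ shows that the conditional probability of leaving that leaf uncovered is $\big(1+o(1)\big)\,\alpha_\ast\,W_z\,e^{-c_\ast y}\,2^{-(n-k_0)}$, where $W_z$ is the derivative-martingale weight carried by $z$ and the universal constant $\alpha_\ast$ is read off from the estimate; summing over the $2^{n-k_0}$ descendants of each $z$ and over $|z|=k_0$ gives $\bE[N\mid\FF_{k_0}]\longrightarrow\alpha_\ast X'_\infty\,e^{-c_\ast y}$, with $X'_\infty$ the almost-sure limit of the derivative martingale of the branching random walk attached to the Gaussian free field on $\TT_n$. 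A matching second-moment estimate — after truncating atypically tall excursions, so that pairs of nearby uncovered leaves contribute negligibly — yields, conditionally on $\FF_{k_0}$, asymptotic Poisson behaviour of $N$, so that $\bP(N=0\mid\FF_{k_0})\to\exp\{-\alpha_\ast X'_\infty e^{-c_\ast y}\}$; letting $n\to\infty$ with dominated convergence produces \eqref{ct2.25b} and identifies the law of $Y'_\infty$ through that of $X'_\infty$.

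The heart of the argument, and the main obstacle, is the barrier estimate in Step 3: one needs the probability that the non-homogeneous, squared-Bessel-type (hence non-Gaussian) single-ray walk stays below a curved barrier on $[k_0,n]$ \emph{with its exact multiplicative constant}, uniformly over the admissible starting and terminal heights; this is what pins down $\alpha_\ast$, and it requires a careful entropic-repulsion analysis near both endpoints together with a change of measure to the derivative martingale. Controlling the fluctuations of $\theta_u$ in Step 1 and justifying the truncation that makes the second-moment method work in Step 3 are the principal secondary technicalities.
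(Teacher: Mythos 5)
The decisive gap is in your Step 1. The inverse local time $\theta_u$ at the root does \emph{not} concentrate to the accuracy you claim: the duration of a single excursion from $\rho$ is $2^{n+1}R_n^1$ in the notation \eqref{ct2.9}, and $\Var(R_n^1)=\sigma_n^2$ is of order one (see \eqref{ct2.10c}), because rare deep excursions dominate the variance; hence one excursion has standard deviation of order $2^{n}$, not $2^{(n+1)/2}$, and over $u\asymp n^2$ excursions the fluctuation of $\theta_u$ is of order $2^{n}\sqrt{u}\asymp 2^{n}n$. This is exactly the same order as the spacing $\approx 2^{n+2}m_n\ep$ between the deterministic time thresholds corresponding to $y$ and $y+\ep$, so these fluctuations are \emph{not} negligible on the scale of $\sqrt{\CC'_n}-m_n$: they contribute an order-one Gaussian shift, which is precisely the $\bar g_\infty$ of \Cref{lem-dermartconv} separating $Y_\infty$ from $Y'_\infty$. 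Concretely, conditioning on the root local time is the same as conditioning on the excursion count at $\rho$, and the derivative martingale attached to the GFF/\abbr{brw} on the tree is $X_\infty$, not the recentered $X'_\infty$ you invoke in Step 3; so if your sandwiching were valid, Steps 2--3 would output $\bE\exp\{-\alpha_\ast X_\infty e^{-c_\ast y}\}$, i.e.\ the law of $Y_\infty$ of \Cref{thm1.2}, which differs from the asserted limit \eqref{ct2.25b} by an independent standard Gaussian. Moreover the fluctuation of $\theta_u$ is correlated with the occupation field inside the tree, so it cannot be removed by a naive two-sided comparison at deterministic $u$. Handling exactly this point is the content of \Cref{sec-ctime} (following CLS, Section 9): one replaces the root clock by the $\FF_k$-measurable stopping time $\tau_k(s)$ of \eqref{ct2.11}, built from the aggregate level-$k$ occupation, and the two hypotheses \eqref{ct2.12b} versus \eqref{ct2.12} in \Cref{prop-eta-tau} are what converts $X_\infty$ into $X'_\infty$.

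Apart from this, your Steps 2--3 follow the isomorphism-theorem route of Cortines--Louidor--Saglietti rather than the paper's, which avoids the isomorphism altogether and works directly with excursion counts, comparing them to a $0$-dimensional Bessel process and proving the sharp tail \Cref{prop-limiting-tail-gff} by first/second moment bounds with curved and straight barriers. That alternative route is legitimate in principle, but everything quantitative in it is asserted rather than argued: the sharp single-ray barrier estimate with its exact constant (here Sections 4--6), the second-moment truncation, and also the fact that the covering event $\{\min_x L^x_{\theta_u}>0\}$ cannot be read off directly from the distributional identity of the Ray--Knight theorem, which in CLS requires substantial extra work. The concrete error to repair, however, is Step 1.
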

\noindent 
That is, the normalized cover time $\sqrt{\CC'_n} -   m_n$ converges in distribution to 
$Y'_\infty$, a standard Gumbel random variable
shifted by $\log (\alpha_{\ast} X'_\infty)$ then scaled by $1/c_\ast$.
See  \Cref{lem-dermartconv} for a description of the random variable
$X'_\infty$ in terms of  the limit of a derivative martingale.

As is often the case, most of the work in proving a statement such as Theorem \ref{theo-covertime} 
involves the control of certain excursion counts. We now introduce notation in order to describe these.  
Let  $V_j$ denote the set of vertices of $\TT_n$ at level $j$, with $V_{-1}=\{\rho\}$. 
For each $v\in V_{n}$ let $v(j)$ denote the ancestor of $v$ at level $j \le n$, 
i.e. the unique vertex in $V_j$ on the geodesic connecting 
$v$ and \corDB{$\rho$}. In particular, $v(-1)$ is  the root \corDB{$\rho$}.

Next, for each $v\in V_{n}$ let 
\begin{align}
T_{v,j}^{s} = \#\{ &\mbox{\rm excursions from  $v(j-1)$ to $v(j)$ made by the  \abbr{srw} on $\TT_n$}\nonumber \\
 &\; \mbox{\rm prior to completing its first $s$ excursions from the root $\rho$}\} .
 \label{Tjsdef}
\end{align}
Setting
\begin{equation}
t^{\ast}_{v,n}=\inf  \{s \in \bZ_+\,:\, T_{v,n}^{s} \neq 0\}\,, 
\label{deftast}
\end{equation}
for the number of excursions to the root by the \abbr{srw} 
till reaching $v$, we consider the corresponding excursion 
cover time of $V_{n}$,
\begin{equation}\label{eq-tsar}
t^{\ast}_{n}:=\sup_{v\in V_{n}} \{ t^{\ast}_{v,n} \} \,.
\end{equation}



Our main tool in proving \Cref{theo-covertime} is a generalization (see \Cref{prop-eta-tau} below),
of the following theorem concerning $t_n^*$.
\begin{theorem}
\label{thm1.2}
With notation as in Theorem \ref{theo-covertime}, 
\begin{equation}
\sqrt{2 t^{\ast}_n} - m_n \overset{dist}{\Longrightarrow}  Y_\infty  
\qquad \mbox{ as } \quad n\to\ff \,,\label{new-CT.0}
\end{equation}
where $Y_\ff := Y'_\ff -  \bar g_\infty$ for a standard Gaussian random variable $\bar g_\infty$,
independent of $Y'_\infty$. Alternatively, for some random variable  $X_\infty>0$,
\begin{equation} 
\bP(Y_\infty\leq y) :=  \bE \(\exp\{-\alpha_{\ast} X_\infty \mathrm{e}^{-c_{\ast}y}\} \) \,. \label{ct2.25}
\end{equation}
\end{theorem}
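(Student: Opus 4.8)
\emph{Reduction.} Fix $y\in\bR$. For $v\in V_j$ write $|v|=j$, and put $s=s_n(y):=\flr{(m_n+y)^2/2}$, so that $\sqrt{2s_n(y)}=m_n+y+o(1)$. Since $t^{\ast}_n\in\bZ_+$, we have $\{\sqrt{2t^{\ast}_n}\le m_n+y\}=\{t^{\ast}_n\le s\}$, and by \eqref{deftast}--\eqref{eq-tsar} the latter is precisely the event that every $v\in V_n$ has been reached within the first $s$ excursions from $\rho$, i.e.\ that $N_s:=\#\{v\in V_n:\,T_{v,n}^{s}=0\}=0$. Thus \Cref{thm1.2} reduces to showing $\bP(N_{s_n(y)}=0)\to\bE(\exp\{-\alpha_\ast X_\infty\mathrm{e}^{-c_\ast y}\})$, and the plan is to prove the stronger, mixed-Poisson statement: conditionally on a suitable $\sigma$-field, $N_{s_n(y)}$ has an asymptotic Poisson law with random mean $\Lambda_y:=\alpha_\ast X_\infty\mathrm{e}^{-c_\ast y}$, so that the limit equals $\bE(\mathrm{e}^{-\Lambda_y})=:\bP(Y_\infty\le y)$.

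\emph{The branching structure and its martingales.} Using the excursion (Ray--Knight type) description of $\{T_{v,j}^{s}\}$ on $\TT_n$: conditionally on the counts at levels $<j$, the level-$j$ counts arise by attaching to each vertex two independent offspring counts, each a sum of as many i.i.d.\ Geometric variables as its parent's count; hence $\{T_{v,j}^{s}\}$ has the law of a binary branching process. Setting $\varphi_{v(j)}:=\sqrt{2\,T_{v,j}^{s}}$, the field $\varphi$ is, along each ray and up to a controlled Bessel-type drift ($-\tfrac12\varphi^{-1}$) and a Gaussian coupling error, a mean-zero unit-variance random walk started from $\varphi_{v(0)}=\sqrt{2s}=m_n+y+o(1)$ and absorbed at $0$, splitting into two i.i.d.\ copies at each branch vertex; a leaf $v$ is uncovered precisely when its ancestral ray is absorbed. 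Consequently $\{\varphi_v\}$ carries an additive martingale at the critical parameter $c_\ast=\sqrt{2\log2}$ and an associated derivative martingale $D_k$, and by the standard theory of critical branching random walks (Biggins, Lyons, and others), adapted to this killed, non-Gaussian setting, $D_k$ converges a.s.\ to a finite strictly positive limit $X_\infty$, which is independent of $y$ (changing $y$ merely shifts the starting height).

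\emph{First moment and barrier estimates (the crux).} A direct computation, using the classical formula $\mathrm{e}^{-x/(2t)}$ for the probability that a ($0$-dimensional) squared-Bessel process started at $x$ is absorbed at $0$ by time $t$, gives $\bE[N_s]=2^n\,\bP(\text{the }\varphi\text{-ray from }\sqrt{2s}\text{ is absorbed within }n\text{ levels})\sim n\,\mathrm{e}^{-c_\ast y}$ under the choice \eqref{28.00} of $m_n$; this diverges, so---just as for the maximum of branching random walk---one restricts to leaves whose ancestral path $\{\varphi_{v(j)}\}_{j\le n}$ stays within a thin tube lying slightly above a moving lower barrier roughly parallel to the straight descent from $m_n+y$ to $0$. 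Writing $N_s^{\mathrm{tr}}$ for the resulting truncated count, sharp barrier estimates---two-sided bounds, uniform in the endpoint heights, for the probability that a killed random-walk bridge follows such a tube---yield $\bE[N_s^{\mathrm{tr}}\mid\mathcal F_k]\to\alpha_\ast\mathrm{e}^{-c_\ast y}\sum_{u\in V_k}w_u$, where $\mathcal F_k$ denotes the $\sigma$-field of the counts down to level $k$ and $w_u$ the derivative-martingale weight of the subtree rooted at $u$, while separately $\bP(N_s\neq N_s^{\mathrm{tr}})\to0$. I expect this step to be the principal obstacle: the barrier estimates must be simultaneously sharp enough to pin down $\alpha_\ast$ and the coefficient of $\log n$ in $\rho_n$, robust to the non-Gaussian and killed nature of the excursion-count walk, and uniform enough to be summed over the branching structure. (The extra pull towards $0$ near the bottom of the tree, absent in plain branching random walk, is what makes this coefficient equal to $1$ rather than $\tfrac32$.)

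\emph{Poisson approximation and conclusion.} To turn the conditional first-moment estimate into a distributional limit, work on two scales: reveal $\mathcal F_k$ for a fixed $k$; conditionally, the problem factorizes over the $2^k$ subtrees rooted at $V_k$, and within each---controlling the pair correlations $\bP(T_{v,n}^{s}=0,\,T_{w,n}^{s}=0)$ by applying the barrier estimates on the segment up to the most recent common ancestor of $v$ and $w$, together with conditional independence below it---the uncovered leaves below $u\in V_k$ form an asymptotically $\mathrm{Poisson}(\alpha_\ast w_u\mathrm{e}^{-c_\ast y})$ cluster, the clusters being asymptotically independent across $u\in V_k$ (e.g.\ by a Chen--Stein bound). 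Hence $N_{s_n(y)}$ given $\mathcal F_k$ is asymptotically $\mathrm{Poisson}(\alpha_\ast\mathrm{e}^{-c_\ast y}\sum_{u\in V_k}w_u)$, and letting $k\to\infty$, with $\sum_{u\in V_k}w_u\to X_\infty$ a.s.\ and a routine interchange-of-limits estimate, gives $\bP(N_{s_n(y)}=0)\to\bE(\exp\{-\alpha_\ast X_\infty\mathrm{e}^{-c_\ast y}\})$, which is \eqref{ct2.25}; the same scheme applied to more general excursion-count functionals gives \Cref{prop-eta-tau}. Finally, the identification $Y_\infty=Y'_\infty-\bar g_\infty$ (and the link between $X_\infty$ and $X'_\infty$) is obtained in the course of deducing \Cref{theo-covertime}: conditionally on the excursion counts, $\CC_n$ is a sum of order $n^2$ independent excursion lengths, and a central limit theorem for this sum contributes the independent standard Gaussian $\bar g_\infty$.
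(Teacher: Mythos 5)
Your overall architecture is recognizably the same as the paper's (condition on the level-$k$ counts, derivative martingale, barrier-restricted first and second moments, a Bessel-type analysis to pin down $\alpha_\ast$), but the central step as you formulate it has a genuine gap: you Poissonize the number of uncovered \emph{leaves}. Uncovered leaves cluster massively: if an edge at distance $j$ from the bottom has an atypically low excursion count, a whole depth-$j$ subtree tends to be uncovered; indeed $\bE[N_s]\sim n e^{-c_\ast y}$ while $\bP(N_s\ge 1)=\Theta(1)$, so conditionally on non-covering the number of uncovered leaves diverges with $n$. Consequently $N_s$ (or your tube-truncated $N^{\mathrm{tr}}_s$) is not asymptotically Poisson, its conditional first moment does not converge to $\alpha_\ast e^{-c_\ast y}\sum_u w_u$ with the \emph{same} constant as the void probability, and the pair-correlation/Chen--Stein step fails: for two leaves branching at bounded distance from level $n$, the conditional probability that the second is also uncovered (and in the tube) is bounded below, so $\bE[N^{\mathrm{tr}}_s(N^{\mathrm{tr}}_s-1)\mid \FF_k]$ is comparable to, not negligible with respect to, $\bE[N^{\mathrm{tr}}_s\mid\FF_k]$. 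The missing idea is a coarse-graining at an intermediate level $n'=n-\ell$: the paper counts vertices $u\in V_{n'}$ whose edge counts stay above a straight barrier, whose excess at level $n'$ lies in the window $I_\ell\asymp\sqrt{\ell}$ of \eqref{dfn:I_ell}, and whose depth-$\ell$ subtree is uncovered (the counts $\Lambda_{n,\ell}$). This collapses each cluster to one unit; the window $I_\ell$ is precisely what kills the near-diagonal pairs in \Cref{lem-second-moment} (via \eqref{35.1}), and only then does the first-moment constant match the tail constant of \Cref{prop-limiting-tail-gff}. Note also that no Poisson limit or Chen--Stein bound is needed for the cover event: by the Markov property the conditional cover probability factorizes exactly over the level-$k$ subtrees, see \eqref{dmart.10}, so a sharp tail estimate for a single subtree suffices; likewise the derivative-martingale input can be taken from the classical Gaussian \abbr{brw} via a fixed-$k$ \abbr{clt} (\Cref{lem-wish}), rather than redeveloping the theory for the killed, non-Gaussian field.

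A second, smaller gap concerns the representation $Y_\infty=Y'_\infty-\bar g_\infty$, which is part of the statement. Your mechanism --- ``conditionally on the excursion counts, $\CC_n$ is a sum of independent excursion lengths and a \abbr{clt} contributes an independent Gaussian'' --- is not right: given the edge excursion counts the elapsed time is (up to one excursion, which is negligible at the relevant scale) determined, since the walk makes $2^{n+1}R_n^s=2\sum_u T^s_{u,|u|}$ steps during its first $s$ excursions, cf.\ \eqref{ct2.9}. The Gaussian $\bar g_\infty$ is the limit of the level-$k$ averages $\bar g_k$ of the approximating \abbr{brw}, i.e.\ it is measurable with respect to the counts themselves, and its independence of $X'_\infty$ (hence of $Y'_\infty$) comes from the orthogonal decomposition $g_u=g'_u+\bar g_k$, see \eqref{ct10j} and \Cref{lem-dermartconv}, which gives $X_\infty=X'_\infty e^{-c_\ast\bar g_\infty}$ and hence turns \eqref{ct2.25} into $Y_\infty\stackrel{d}{=}Y'_\infty-\bar g_\infty$. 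Your intuition that the discrepancy between excursion-counted and real-time normalizations is responsible for $\bar g_\infty$ is correct in spirit, but the independence claim does not come for free from a conditional \abbr{clt}; it requires exactly this decomposition of the field.
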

\noindent
As we will see, most of the technical work in the proof of \Cref{thm1.2} (or
 \Cref{prop-eta-tau}), is in obtaining the sharp tail estimates of \Cref{prop-limiting-tail-gff} below.

To describe  $X_\infty$ and $X'_\infty$,
let $\{g_{u},  u \in \TT_\infty \}$ be the standard Gaussian branching 
random walk (\abbr{brw}), on the infinite binary tree $\TT_\infty$. That is, 
placing i.i.d. standard normal weights on the edges of $\TT_\infty$,
we write  $g_{u}$ for the sum of the weights along the geodesic connecting  $0$ to $u$.
We further consider the empirically centered  $g_u':=g_u - \bar g_{|u|}$, 
where 
\begin{equation}\label{barg-k}
\bar g_k = 2^{-k} \sum_{u'  \in V_k} g_{u'} \,, \quad k \in \bZ_+ \,,
\end{equation}
 denotes the average of the 
\abbr{brw} at level $k$, and set
\begin{equation}
X_k = \sum_{u \in V_k}\(   c_\ast k+ g_u \) \,\,\mathrm{e}^{-c_\ast\(   c_\ast k+ g_u \)}
\,, \;
X'_k = \sum_{u \in V_k} \( c_\ast k + g'_u \) \,\, \mathrm{e}^{-c_\ast\( c_\ast k + g'_u \)}\,.
 \label{dmart.1}
\end{equation} 
It is not hard to verify that $\{X_k\}$ is a martingale, referred to as the \abbr{derivative martingale}.
We then have that
\begin{lemma} 
\label{lem-dermartconv} 
$X_k$, $X'_k$ and $\bar g_k$ converge a.s. to positive, finite limits $X_\infty$, 
$X'_\infty$ and a standard Gaussian variable $\bar g_\infty$, independent of $X'_\infty$, such that 
\begin{equation}\label{tilted-dermg}
X_\infty = X'_\infty e^{-c_\ast \bar g_\infty} \,.
\end{equation}
\end{lemma}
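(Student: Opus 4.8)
The plan is to reduce the lemma to two classical facts about branching random walk at criticality, and then to read off the statements concerning $X'_k$, $\bar g_k$ and their joint law by elementary Gaussian arguments. First note that for the Gaussian \abbr{brw} on $\TT_\infty$ one has $\psi(\beta):=\log\bE\big[\sum_{u\in V_1}e^{-\beta g_u}\big]=\log 2+\tfrac12\beta^2$, so $c_\ast=\sqrt{2\log 2}$ is exactly the critical parameter ($c_\ast\psi'(c_\ast)=\psi(c_\ast)$), $\{X_k\}$ from \eqref{dmart.1} is the associated critical derivative martingale, and $W_k:=\sum_{u\in V_k}e^{-c_\ast(c_\ast k+g_u)}$ is the associated critical additive martingale. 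I would then invoke the standard theory of critical \abbr{brw} (Biggins--Kyprianou; A\"{\i}d\'ekon; A\"{\i}d\'ekon--Shi), whose integrability hypotheses are automatic for Gaussian increments, to conclude that $X_k\to X_\infty$ a.s.\ with $0<X_\infty<\infty$ a.s., and that the nonnegative martingale $W_k$ satisfies $W_k\to 0$ a.s. These two inputs are the only nontrivial ingredients; the rest is bookkeeping.

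Next I would analyze $\bar g_k$. Writing $\xi_w$ for the edge weight into $w$ and $m_j:=2^{-j}\sum_{w\in V_j}\xi_w$, a direct computation gives $\bar g_k=\sum_{j=1}^k m_j$, so $\bar g_{k+1}-\bar g_k=m_{k+1}\sim\mathcal{N}(0,2^{-(k+1)})$ and the increments $(m_j)_{j\ge1}$ are independent. Hence $\{\bar g_k\}$ is an $L^2$-bounded martingale converging a.s.\ and in $L^2$ to a limit $\bar g_\infty$, which is standard Gaussian since $\sum_{j\ge1}2^{-j}=1$.

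For the independence of $\bar g_\infty$ from $X'_\infty$ I would use the orthogonal decomposition of the increments level by level. Set $\zeta_w:=\xi_w-m_{|w|}$. For each fixed $j$ the centered Gaussian vector $(\zeta_w)_{w\in V_j}$ is independent of the sample mean $m_j$; since the increment collections at distinct levels are independent, the whole family $(\zeta_w)_w$ is independent of $(m_j)_{j\ge1}$, hence of $\bar g_\infty=\sum_j m_j$. Now $g'_u=g_u-\bar g_{|u|}=\sum_{j=1}^{|u|}(\xi_{u(j)}-m_j)=\sum_{j=1}^{|u|}\zeta_{u(j)}$, so the field $\{g'_u\}$ — and therefore every $X'_k$ and its a.s.\ limit — is a measurable function of $(\zeta_w)_w$, and thus independent of $\bar g_\infty$.

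Finally I would link the two derivative martingales. Substituting $g_u=g'_u+\bar g_k$ into \eqref{dmart.1} and pulling out the factor $e^{-c_\ast\bar g_k}$ common to all $u\in V_k$ gives
\[
X_k=e^{-c_\ast\bar g_k}\Big(X'_k+\bar g_k\sum_{u\in V_k}e^{-c_\ast(c_\ast k+g'_u)}\Big)=e^{-c_\ast\bar g_k}X'_k+\bar g_k W_k,
\]
using $\sum_{u\in V_k}e^{-c_\ast(c_\ast k+g'_u)}=e^{c_\ast\bar g_k}W_k$; equivalently $X'_k=e^{c_\ast\bar g_k}(X_k-\bar g_k W_k)$. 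Letting $k\to\infty$ and using $\bar g_k\to\bar g_\infty$, $X_k\to X_\infty$ and $W_k\to 0$ a.s., the right-hand side converges a.s.\ to $e^{c_\ast\bar g_\infty}X_\infty\in(0,\infty)$; hence $X'_k$ converges a.s.\ to $X'_\infty:=e^{c_\ast\bar g_\infty}X_\infty$, which is positive and finite and satisfies $X_\infty=X'_\infty e^{-c_\ast\bar g_\infty}$, i.e.\ \eqref{tilted-dermg}. The one genuinely hard point is the quoted a.s.\ convergence of the critical derivative martingale to a strictly positive limit, together with the vanishing of the critical additive martingale; granted these, everything above is routine.
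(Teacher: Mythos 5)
Your proposal is correct and follows essentially the same route as the paper: the same algebraic identity $X'_k=(X_k-\bar g_k\widetilde X_k)e^{c_\ast\bar g_k}$ (your $W_k$ is the paper's $\widetilde X_k$), the same appeal to A\"{\i}d\'ekon for $X_k\to X_\infty\in(0,\infty)$, the same Gaussian computation for $\bar g_k$, and independence of $X'_\infty$ from $\bar g_\infty$ via Gaussian orthogonality of the centered field. The only cosmetic difference is that you quote the classical vanishing of the critical additive martingale to get $\widetilde X_k\to0$, whereas the paper derives this from the Hu--Shi almost-sure lower bound on $\min_u(c_\ast k+g_u)$ (the event $A_k$) together with the finiteness of $X_\infty$; both justifications are valid.
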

The convergence of $X_k$ to $X_\infty$ is well known, see e.g. \cite{Aidekon} (and, for 
its first occurrence in terms of limits in branching processes, \cite{LalleySelke}), and building 
on it, we  easily deduce the corresponding convergence for $X'_k$.

\subsection{Background and related results}
Theorem \ref{theo-covertime} is closely related to the recent paper \cite{CLS}, which deals with continuous time \abbr{SRW}, and we wish to acknowledge priority
to their work. The proofs however are different - while \cite{CLS} builds heavily on the isomorphism theorem of 
\cite{EKMRS} to relate directly the occupation time 
on the tree to the Gaussian free field on the tree which is nothing but the \abbr{BRW}  described above
Lemma \ref{lem-dermartconv}, our proof  is a refinement of \cite[Theorem 1.3]{BRZ}, where 
the tightness of the \abbr{lhs} of \eqref{new-CT.0} is proved. Our proof, which is based
 on the strategy for proving convergence in law of the maximum of branching random walk described in \cite{BDZconvlaw},
  was obtained independently of \cite{CLS}, except that in proving that Theorem \ref{thm1.2} implies Theorem \ref{theo-covertime}, we do borrow  some ideas from \cite{CLS}.
  As motivation to our work, we note that estimates from \cite{BRZ} were instrumental in obtaining 
 the tightness of  the (centered) cover time of the two dimensional sphere by $\epsilon$-blowup of Brownian motion, see \cite{BRZ17}. We expect that
 the ideas in the current work will play an important role in improving the tightness result of 
 \cite{BRZ17} to convergence in law. We defer this to forthcoming work.
 
 We next put our work in context. The study of the cover time of graphs by \abbr{SRW} has a long history.
Early bounds appear in \cite{Matthews}, and a general result showing that the cover time is concentrated as soon as it
is much longer than the maximal hitting time appears in \cite{Aldousgeneral}.
A modern general perspective linking the cover time of graphs to Gaussian processes appears in \cite{DLP},
and was refined to sharp concentration  in \cite{ding} (for many graphs including trees) 
and \cite{zhai} (for general graphs). See also \cite{lehec} for a different perspective on \cite{DLP}. For the cover time of trees, 
an exact first order asymptotic appears in \cite{Aldous}. The tightness of $\sqrt{ \CC'_{n} }$ around an implicit constant was derived by analytic methods in 
\cite{BZ}, and, following  the identification of the logarithmic correction in $m_n$  \cite{DingZeitouni-ASharpEstimateForCoverTimesOnBinaryTrees}, its $O(1)$ identification
appears in \cite{BRZ}.  

We note that the evaluation of the cover time is but one of many natural questions concerning the 
process of points with a-typical (local) occupation time, and quite a bit of work has been devoted to this topic. 
We do not elaborate here and refer the reader to \cite{DPRZ1,Sznitman,AbeBiskup}. 
Particularly relevant to this paper is the recent 
\cite{Abe}.

It has been recognized for quite some time that the study of the cover time of two dimensional manifolds by Brownian motion (and of the
cover time of two dimensional lattices by \abbr{SRW}) 
 is related to a hierarchical structure similar to that appearing in the study of the cover time for trees,
 see e.g. \cite{DemboPeresEtAl-CoverTimesforBMandRWin2D} and, for a recent perspective, \cite{Schmidt}.  A similar 
hierarchical structure also appears in the study of extremes of the critical  Gaussian free field, and in other logarithmically correlated fields appearing e.g. in the study of random matrices. 
We do not discuss that literature and refer instead to  recent surveys offering different perspectives
\cite{Arguin,Biskup,Bovier,Kistler,Zeitouni}.

\subsection{Structure of the paper}
In contrast with \cite{CLS} the key to our proof of \Cref{thm1.2} is the following 
sharp right tail for the excursion cover times.
\begin{theorem}\label{prop-limiting-tail-gff}
There exists a finite $\alpha_\ast>0$ such that  
\begin{equation}
\label{righttail.1}
\lim_{z\to \infty}\limsup_{n\to \infty}|z^{-1} \mathrm{e}^{c_\ast z} 
\bP (\sqrt{2t^{\ast}_n} -m_n >z) - \alpha_\ast| =0\,.
\end{equation}
\end{theorem}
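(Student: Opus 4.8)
The plan is to reduce the sharp right-tail asymptotics of $\sqrt{2t_n^\ast}-m_n$ to the analogous, and better-understood, right tail for the maximum of the branching random walk $\{g_u\}$, and then extract the constant $\alpha_\ast$ from the known Bramson–Ding–Zeitouni type first-order correction. The starting point is that an excursion from the root, decomposed along the tree, is governed by a critical Galton–Watson-type branching structure: conditionally on $s$ root excursions, the counts $T_{v,j}^s$ along a ray behave like a time-inhomogeneous branching process whose ``height'' after $n$ levels has fluctuations of order $\sqrt{s}$, and $t_n^\ast$ is the least $s$ for which every leaf has been hit, i.e. for which the associated ``0-level set'' of the excursion-count tree is empty. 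The key heuristic, already implicit in \cite{BRZ}, is the second Ray–Knight / Dynkin-type correspondence: the field $\{\sqrt{2T_{v,n}^s}\}_{v\in V_n}$ is, to leading order and after centering by $\sqrt{2s}$, comparable to the branching random walk $\{g_v\}_{v\in V_n}$, so that $\{\sqrt{2t_n^\ast}\}$ plays the role of $\sqrt{2s}$ at the value of $s$ where $\max_v(\sqrt{2s}-\sqrt{2T_{v,n}^s})$ first reaches $\sqrt{2s}$ — equivalently where the BRW-like minimum of $\sqrt{2T_{v,n}^s}$ hits $0$. Concretely, I would first establish a coupling/comparison lemma: there is a version of $\{T_{v,n}^s\}$ and of $\{g_v\}$ on a common space with $\sqrt{2T_{v,n}^s} = \sqrt{2s} + g_v + (\text{error})$ uniformly over $v\in V_n$, where the error is $o(1)$ in the relevant regime $s \asymp t_n^\ast$. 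This is the kind of estimate that underlies the tightness proof in \cite{BRZ}; here it must be pushed to control the tail event $\{\sqrt{2t_n^\ast}-m_n>z\}$ uniformly in large $z$.

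Granting such a comparison, the event $\{t_n^\ast > s\}$ becomes (approximately) $\{\min_{v\in V_n}(\sqrt{2s}+g_v) < c\}$ for an appropriate threshold, i.e. $\{\min_{v} g_v < -\sqrt{2s}+c\}$, so that writing $\sqrt{2s} = m_n + z$ we are asking for the probability that the BRW minimum drops below $-m_n - z + c = -c_\ast n + \frac{\log n}{c_\ast} - z + O(1)$. But $-c_\ast n + \frac{\log n}{c_\ast}$ is exactly $-m_n'$, the centering for $\min_{v\in V_n} g_v$ (equivalently $m_n$ for $\max$), and the sharp right tail of the recentered BRW extremum is classical: $\bP(\min_v g_v < -m_n' - z) \sim \alpha_\ast\, z\, e^{-c_\ast z}$ as first $n\to\infty$ then $z\to\infty$, with $\alpha_\ast$ the Bramson–Hu–Shi / Aïdékon constant, and with the linear prefactor $z$ coming precisely from the derivative-martingale normalization (this is why $X_\infty$ and the derivative martingale enter in \Cref{thm1.2}). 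Thus I would: (i) prove the upper bound $\limsup_n \bP(\sqrt{2t_n^\ast}-m_n>z) \le (\alpha_\ast+o_z(1))\, z\, e^{-c_\ast z}$ by a first-moment/barrier argument on the excursion-count tree, bounding the number of leaves not yet hit after $s = \frac{1}{2}(m_n+z)^2$ excursions via the truncated first moment with a barrier at height $\sim c_\ast j$ along each ray (the ``barrier estimates'' advertised in the keywords), and (ii) prove the matching lower bound by a second-moment (Paley–Zygmund) argument restricted to the same barrier event, again transported through the comparison with BRW where the second-moment computation is standard.

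The main obstacle, I expect, is step (i)–(ii) made uniform in $z$: the comparison between $\sqrt{2T_{v,n}^s}$ and the Gaussian field $\sqrt{2s}+g_v$ is only good when $T_{v,n}^s$ is not too small, whereas the event we must control is exactly the one where some $T_{v,n}^s$ is anomalously small (indeed zero). One must therefore run the barrier argument directly on the excursion counts — controlling, for each leaf $v$, the probability that the branching-process count along the ray to $v$ stays below a moving barrier and vanishes at level $n$, together with the requisite truncation to make the first moment finite and the second moment comparable — rather than on the Gaussian proxy, and only use the Gaussian comparison to identify the \emph{constant} $\alpha_\ast$ (via the derivative martingale $X_k$ of \eqref{dmart.1} and \Cref{lem-dermartconv}) once the correct order $z e^{-c_\ast z}$ is in hand. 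Matching the constant across the two models — i.e. checking that the $\alpha_\ast$ produced by the excursion-count barrier computation equals the BRW constant appearing in \Cref{thm1.2} — is where the delicate bookkeeping of the $O(1)$ terms in $m_n$ and in the Ray–Knight coupling will have to be done carefully; this is presumably the bulk of the ``technical work'' the authors allude to just after the statement of \Cref{thm1.2}.
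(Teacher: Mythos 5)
Your overall architecture (truncated first moment along a barrier, Paley--Zygmund on the same barrier event, and the correct realization that the argument must be run directly on the excursion counts because the Gaussian comparison degenerates exactly where some $T^s_{v,n}$ vanishes) matches the paper's strategy in outline. But there is a genuine gap at the point where the constant is supposed to come from. You propose to ``only use the Gaussian comparison to identify the constant $\alpha_\ast$'' and to match it with the Bramson--Hu--Shi/A\"{\i}d\'ekon constant for the BRW extremum, via a coupling $\sqrt{2T^s_{v,n}}=\sqrt{2s}+g_v+o(1)$ uniform over the $2^n$ leaves. No such coupling is available (the CLT comparison of \Cref{lem-wish} is only for a fixed top part $\TT_k$ as $s\to\infty$), and more importantly the tail constant is \emph{not} inherited from the Gaussian model: it is sensitive to the local, non-Gaussian behavior of the excursion-count chain near the barrier and to the non-covering mechanism at the bottom $\ell$ levels. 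The paper never identifies $\alpha_\ast$ with a BRW constant. Instead it decomposes the non-covering event at level $n'=n-\ell$, restricts the endpoint value of $\eta_v(n')-\bar\varphi_n(n')$ to a window $I_\ell=\sqrt{\ell}\,[r_\ell^{-1},r_\ell]$, and obtains $\bE_{s_{n,z}}[\Lambda_{n,\ell}]\sim\alpha_\ell\, z\,e^{-c_\ast z}$ with $\alpha_\ell$ expressed through the non-covering probabilities $\wt\ga_\ell$ of depth-$\ell$ subtrees (see \eqref{3a.54m}); the existence of $\alpha_\ast=\lim_\ell\alpha_\ell$ then follows from the sandwich $\bP_{s_{n,z}}(\eta^\sharp_n=0)/\bE_{s_{n,z}}[\Lambda_{n,\ell}]\to1$ in both directions, since the left side does not depend on $\ell$ --- not from matching with any externally known constant.

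The second, related gap is that you do not supply the mechanism for the sharp $(1+o(1))$ evaluation of the truncated first moment, which is the technical heart of the theorem. Bounding the barrier probabilities to the right order $z e^{-c_\ast z}$ (as in \cite{BRZ}) is not enough; one needs asymptotics with the correct multiplicative constant, uniformly in the regime $s=s_{n,z}$, for the non-Gaussian chain $j\mapsto\eta_v(j)$. The paper achieves this by exploiting the exact Markov structure relating that chain to a $0$-dimensional Bessel process, then passing to Brownian motion via Girsanov (the change-of-measure factor being controlled on the barrier event), and finally invoking Bramson-type Brownian-bridge ballot estimates to compare curved and straight barriers up to $1\pm\epsilon$ (\Cref{theo-sbarrierd}, \Cref{theo-sbarrierbmd}, \Cref{lem-evening1}). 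Your proposal replaces this by ``the second-moment computation is standard ... transported through the comparison with BRW,'' which both understates the issue (the comparison you would transport through does not hold at the required precision) and omits the curved-versus-straight barrier bookkeeping ($\Gamma_{n,\ell}$ vs.\ $\Lambda_{n,\ell}$, and the event $G_{n,\ell}$) needed to make the upper and lower bounds meet. In short: right skeleton, but the step that actually produces a convergent constant $\alpha_\ast$ --- and hence the statement \eqref{righttail.1} itself --- is missing.
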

After quickly dispensing of \Cref{lem-dermartconv}, in Section \ref{sec-cl} 
we obtain \Cref{thm1.2} out of  \Cref{prop-limiting-tail-gff}, by adapting the 
approach of \cite{BDZconvlaw} for the convergence in law 
of the maximum of \abbr{brw}. The main difference is that here we have 
a more general Markov chain (and not merely a sum of i.i.d.-s). 

In the short \Cref{sec-ctime}, which is the only part of this work that parallels
the derivation of \cite{CLS}, we deduce  \Cref{theo-covertime} out of \Cref{thm1.2} 

The bulk of this paper is devoted to the proof of \Cref{prop-limiting-tail-gff}, which 
we establish in \Cref{sec-rt} by a refinement of the approach used in 
deriving \cite[Theorem 1.3]{BRZ}. In doing so, we defer the a-priori 
bounds we need on certain barrier events,
which might be of some independent interest, to \Cref{sec-re},
where we derive these bounds by refining estimates from \cite{BRZ}.
The proof of the main contribution to the tail estimate of 
\Cref{prop-limiting-tail-gff}, as stated in \Cref{prop-asymptotic-first-moment}, is further 
deferred to   \Cref{sec-sharpbar}. There, utilizing 
the close relation between our Markov 
chain and the $0$-dimensional Bessel process, we get 
 sharper barrier estimates, now up to $(1+o(1))$ factor
of the relevant probabilities.

\section{From tail to limit:  \Cref{lem-dermartconv} and  \Cref{thm1.2}}\label{sec-cl}
\label{sec-maintheorem}

We start by proving the elementary \Cref{lem-dermartconv}, denoting throughout 
the last common ancestor of $u,u' \in \TT_\infty$ by $w = u \wedge u'$.
Namely, $w=u(|w|)$ for $|w|=\max \{ j \ge 0,u(j)=u'(j)\}$. 
\begin{proof}[Proof of \Cref{lem-dermartconv}]
The \abbr{brw} $\{g_{u};\,\, u \in \TT_k  \}$ of
\Cref{lem-dermartconv} is the centered Gaussian random vector having 
\begin{equation}\label{cov-brw}
\Cov(g_u,g_{u'}) = |u \wedge u'| \,.
\end{equation}
Further, the average of the \abbr{brw} weights on the edges of $\TT_\infty$ 
between levels $(k-1)$ and $k$, is precisely $\Delta g_k := \bar g_k - \bar g_{k-1}$ for $\bar g_k$ of \eqref{barg-k}.
With $(\Delta g_k, k \ge 1)$ independent centered Gaussian random 
variables with $\Var(\Delta g_k) = 2^{-k}$, we have that $\bar g_k$ 
converges a.s. to the standard Gaussian $\bar g_\infty := \sum_k \Delta g_k$.  
Next, recall the existence of  $w_k \to \infty$ such that, a.s.,
\begin{equation}\label{Ak-def}
A_k := \{ c_\ast k + g_u  \in w_k + (0,2 c_\ast k)  \,, \;\; \forall u \in V_k \}\;  \mbox{\rm occurs for all $k$ large,}
\end{equation}
see e.g. \cite[(1.8)]{AS09}. For $X_k$ of \eqref{dmart.1}, it follows from \cite{Aidekon} that
$X_k \stackrel{a.s.}{\to} X_\infty \in (0,\infty)$,
while 
\begin{equation}\label{tildeXk}
X_k > w_k \widetilde{X}_k   \quad \mbox{on}  \;\; A_k, \qquad \mbox{for} \qquad 
\widetilde X_k := \sum_{u \in V_k} e^{-c_\ast (c_\ast k + g_u)} \,.
\end{equation}
Thus,  $\widetilde X_k \stackrel{a.s.}{\to} 0$ as $k \to \infty$. From the two 
expressions in \eqref{dmart.1} we have  that 
\[
X'_k = (X_k - \bar g_k \widetilde X_k) e^{c_\ast \bar g_k} 
\]
which thereby converges a.s. to $X'_\infty = X_\infty e^{c_\ast \bar g_\infty}$ as claimed 
in \eqref{tilted-dermg}. Finally, from \eqref{cov-brw} we deduce that for any $u \in V_k$, $k \ge 0$,
 \begin{align}
\Cov(g_u, \bar g_k)&
 =2^{-k}  \sum_{u' \in V_k} |u \wedge u'| 
 = \sum_{j=1}^{k}  (j-1) 2^{-j} + k 2^{-k} = 1-2^{-k}.
 \label{ct10j}
 \end{align}
This covariance is constant over $u \in V_k$, hence
$\Cov(g'_u,\bar g_{|u|}) = 0$ for $g'_u := g_u - \bar g_{|u|}$, implying 
the independence of $\bar g_k$ and $\{ g'_u, u \in V_k \}$. The latter 
variables are further independent of the  \abbr{brw} edge weights outside $\TT_k$, 
hence of $\bar g_\infty$. Thus, the random variable $X'_\infty$, which is measurable on
$\sigma(g'_u, u \in \TT_\infty)$, must also be independent
of $\bar g_\infty$. 
\end{proof}
\noindent
We next normalize the counts $T^s_{u,j}$ of \eqref{Tjsdef} and define
\begin{equation}\label{std-occ}
\wh T_{u}(s) := \frac{T_{u,|u|}^{s} - s}{\sqrt{2s}} \,, \qquad \wh{S}_k(s) := 2^{-k} \sum_{u \in V_k} \wh{T}_u(s)\,,
\end{equation}
and get from the \abbr{clt}  for sums of i.i.d.
the following relation with the \abbr{brw}.
\bl \label{lem-wish} 
For fixed $k$ and the \abbr{brw} $\{g_{u};\,\, u \in \TT_k  \}$ of \Cref{lem-dermartconv}, we have    
\begin{align}\label{clt-brw}
\lc \wh{T}_{u}(s) ,\, u\in \TT_{k}\rc & \overset{dist}{\underset{s\to \ff}{\Longrightarrow}}
\lc g_u \,,\, u \in \TT_k \rc, \\
\lc \wh{T}_{u}(s) - \wh{S}_k (s) ,\, u\in V_{k}\rc & \overset{dist}{\underset{s\to \ff}{\Longrightarrow}}
\lc g'_u \,,\, u \in V_k \rc.
\label{clt-mbrw}
\end{align}
\el 
\begin{proof}  The consecutive excursions to $\rho$ by the \abbr{srw} on $\TT_n$ are i.i.d. 
Hence, $s \mapsto \{T^s_{u,|u|},\; u \in \TT_k\}$ is an $\bR^d$-valued random walk (with $d
$ 
the finite size of $\TT_k$). Further, projecting the \abbr{srw} on $\TT_n$ to the geodesic from $u$  
to $\rho$, yields a symmetric \abbr{srw} on $\{-1,0,\ldots,|u|\}$. Thus, denoting by 
$T_j$ the number of excursions from $u(j-1)$ to $u \in V_j$ during a single excursion 
to $\rho$, 
we have that  
$\bP(T_j \ge 1) = {\sf p}_j := 1/(j+1)$ (for reaching $u$ before returning to $\rho$), 
and $T_j$ conditional on $T_j \ge 1$, follows a geometric law 
of success probability $\bP(T_j=1 |T_j \ge 1)={\sf p}_j$.
Consequently, for any $j \in [0,k]$,
 \begin{equation}
 \begin{aligned}
 \bE (T_j)  = 1 \,, \qquad
  \label{ct2.6}
 \Var(T_{j}) &= \bE [ T_j (T_j-1) ] = \frac{2 (1-{\sf p}_j)}{{\sf p}_j} = 2 j \,.
\end{aligned} 
 \end{equation}
Note that $T^1_{u,|u|}$ and $T^1_{u',|u'|}$ are independent, conditionally on $T^1_{w,|w|}$, 
for $w=u \wedge u'$, each having the conditional mean $T^1_{w,|w|}$. We thus see that
for any $u,u' \in \TT_k$, in view of \eqref{ct2.6},  
 \begin{equation}
\Cov(T^1_{u,|u|},T^1_{u',|u'|}) = \Var(T_{|u \wedge u'|}) =2 |u\wedge u'|  \,. \label{ct2.8}
 \end{equation}  
Comparing with \eqref{cov-brw}, the i.i.d. increments of our $\bR^d$-valued 
random walk have the mean vector ${\bf 1}$ and covariance matrix which is twice
that of the \abbr{brw}, with  \eqref{clt-brw} and \eqref{clt-mbrw} as immediate consequences of the  multivariate \abbr{clt}.
\end{proof}

\noindent
Using throughout the notation 
\begin{equation}
\label{eq:defs}
s_{n,y} := (m_n+y)^2/2
\end{equation}  for $m_n$ of \eqref{28.00}, we have 
that  
\[
\{ \sqrt{2 t^\ast_n} - m_n \le y\} = \{ t^\ast_n \le s_{n,y} \}.
\]
In view of \Cref{lem-wish}, we thus see that  \Cref{thm1.2} is an immediate 
consequence (for non-random $\tau_k(s)=s$), of  \eqref{ct2.17} in  the following lemma. (The additional statement 
employing  \eqref{ct2.12} is utilized in the proof of \Cref{theo-covertime}.)
\begin{proposition}\label{prop-eta-tau}
Let $\FF_k$ denote the $\sigma$-algebra of the $\TT_k$-projection of the 
\abbr{srw} on $\TT_n$.  If  $\FF_k$-measurable $\{ \tau_k(s), s \ge 0\}$ are such that  
\begin{equation}
\wh{\tau}_k(s) := \Big(\frac{\tau_k(s) -s}{\sqrt{2s}}\Big) 
\overset{p}{\underset{s\to \ff}{\longrightarrow}} 0,
 \label{ct2.12b}
 \end{equation}
then for any fixed $y \in \bR$,
\begin{equation}
\label{ct2.17}
\lim_{k\rightarrow\infty} \limsup_{n \to \infty}
|\bP( t^{\ast}_{ n}   \le  \tau_{k}(s_{n,y}) ) -  \bP(Y_\infty \le y)| = 0 \,.
\end{equation}
Further, replacing \eqref{ct2.12b} by
 \begin{equation}
\wh{\tau}_k(s) + \wh{S}_k(s) 
\overset{p}{\underset{s\to \ff}{\longrightarrow}} 0 \,,
\label{ct2.12}
\end{equation}
leads to \eqref{ct2.17} holding with $Y'_\infty$ of \eqref{ct2.25b} instead of $Y_\infty$.
\end{proposition}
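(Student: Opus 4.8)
The plan is to follow the strategy of \cite{BDZconvlaw} for convergence in law of the maximum of branching random walk, but adapted to the Markov chain $s \mapsto \{T^s_{u,|u|}\}$ rather than an i.i.d. sum. The key reduction is the following. Fix $y \in \bR$ and write $s = s_{n,y}$. Conditioning on $\FF_k$ and on the values $\tau_k(s)$, the event $\{t^\ast_n \le \tau_k(s)\}$ says that, starting the excursion counts from the level-$k$ configuration $\{T^{\tau_k(s)}_{u,k}, u \in V_k\}$, every vertex of $V_n$ is reached within the allotted budget. By the branching/Markov structure of the excursion process (the sub-counts below distinct $u \in V_k$ are conditionally independent given their level-$k$ values), this conditional probability factorizes over $u \in V_k$ as $\prod_{u \in V_k} q_{n-k}\big(T^{\tau_k(s)}_{u,k}\big)$, where $q_m(\ell)$ is the probability that an excursion process on $\TT_m$ started with $\ell$ excursions at the top covers all of $V_m$. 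First I would record, using the tail estimate \Cref{prop-limiting-tail-gff} (applied at depth $n-k$ and transferred to the started-from-$\ell$ chain via a standard comparison, since $\ell$ excursions from the top of $\TT_{n-k}$ behave like $\ell$ excursions from the root after one step), a sharp asymptotic of the form
\begin{equation}
1 - q_{n-k}(\ell) \;=\; \big(\alpha_\ast + o(1)\big)\, \big(m_{n-k} - \sqrt{2\ell}\big)_+ \, \mathrm{e}^{-c_\ast(m_{n-k} - \sqrt{2\ell})} \,,
\label{proposal-qtail}
\end{equation}
uniformly over the relevant range of $\ell$, with error controlled as $k, n \to \infty$ and then a cutoff parameter $\to \infty$.

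Next I would feed in the CLT input. By \eqref{ct2.12b}, $\tau_k(s) = s + o_p(\sqrt{s})$, and by \Cref{lem-wish} the normalized level-$k$ counts $\wh T_u(\tau_k(s)) = (T^{\tau_k(s)}_{u,k} - \tau_k(s))/\sqrt{2\tau_k(s)}$ converge jointly, as $s \to \infty$, to the BRW $\{g_u, u \in V_k\}$ (the $o_p$ shift in $\tau_k$ being negligible after normalization). A Taylor expansion of $\sqrt{2\ell}$ around $\ell = \tau_k(s)$ gives $\sqrt{2\, T^{\tau_k(s)}_{u,k}} = \sqrt{2\tau_k(s)}\,(1 + \wh T_u/\sqrt{\cdots}\,) + \ldots$, and since $\sqrt{2 s_{n,y}} = m_n + y$ one obtains
\[
m_{n-k} - \sqrt{2\, T^{\tau_k(s)}_{u,k}} \;\longrightarrow\; -\,y \,+\, c_\ast k \,+\, g_u \,+\, o(1)
\]
in distribution (using $m_n - m_{n-k} \to c_\ast k$ up to the slowly varying log-correction, which must be tracked with the precise form of $\rho_n$ in \eqref{28.00}). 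Plugging this into \eqref{proposal-qtail} and using $\prod(1 - a_u) \approx \exp(-\sum a_u)$ when the $a_u$ are small,
\[
\bP\big(t^\ast_n \le \tau_k(s_{n,y}) \,\big|\, \FF_k\big) \;\longrightarrow\; \exp\Big\{ -\alpha_\ast \mathrm{e}^{-c_\ast y} \sum_{u \in V_k} (c_\ast k + g_u)_+ \mathrm{e}^{-c_\ast(c_\ast k + g_u)} \Big\} \,,
\]
and the sum inside is $X_k$ up to the negligible contribution of atypical $u$ (those with $c_\ast k + g_u < 0$, or outside the barrier window $A_k$ of \eqref{Ak-def}). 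Taking expectations, then $n \to \infty$ and then $k \to \infty$, \Cref{lem-dermartconv} gives $X_k \to X_\infty$ and yields \eqref{ct2.17}. For the second assertion, \eqref{ct2.12} replaces $\wh T_u(\tau_k(s))$ by the empirically centered version $\wh T_u - \wh S_k$, whose limit is $g'_u$ by \eqref{clt-mbrw}; the identical computation then produces $X'_k \to X'_\infty$ in place of $X_k$, giving \eqref{ct2.17} with $Y'_\infty$.

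The main obstacle is establishing \eqref{proposal-qtail} \emph{uniformly} over $\ell$ in a window of width $O(\sqrt{s})$ around $s$ — equivalently, over starting heights $\sqrt{2\ell} - m_{n-k}$ ranging over a bounded interval shifted by $c_\ast k$ — with errors that vanish in the iterated limit. \Cref{prop-limiting-tail-gff} as stated is a statement about a fixed starting configuration (one excursion from $\rho$), so one needs a robust transfer principle: an interpolation/monotonicity argument in $\ell$ together with a decomposition of the depth-$n$ chain at level $k$ that reduces covering $\TT_n$ from $\ell$ top-excursions to covering the $2^k$ subtrees $\TT_{n-k}$, each from its own random number of excursions. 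This requires the Markov property for excursion counts and a uniform version of the barrier estimates; I would import these from the a-priori bounds deferred to \Cref{sec-re} and the first-moment asymptotics of \Cref{prop-asymptotic-first-moment}. A secondary technical point is the exchange of the $s \to \infty$ (CLT) limit with the $n \to \infty$ cover-time asymptotics: one handles this by first passing to the CLT limit at fixed $k$ for the bounded functional $\exp\{-\alpha_\ast \mathrm{e}^{-c_\ast y}(\cdot)\}$ of the level-$k$ counts — legitimate since this functional is bounded and continuous — and only afterward letting $k \to \infty$, absorbing all cover-time-specific error into the $\limsup_n$ before the outer $k$-limit, exactly as the nested limits in \eqref{ct2.17} are arranged.
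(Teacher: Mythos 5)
Your proposal follows essentially the same route as the paper's proof: condition on $\FF_k$, factor the covering event over $u\in V_k$ using the conditional independence of the subtrees, feed in the CLT of \Cref{lem-wish} (with the $o_p(\sqrt{s})$ shift of $\tau_k$ absorbed via invariance), plug the sharp tail of \Cref{prop-limiting-tail-gff} into the product, and pass $n\to\infty$ and then $k\to\infty$ to recognize the derivative martingale limit, truncating atypical vertices via \eqref{Ak-def}; the centered case under \eqref{ct2.12} is handled identically with $g'_u$, $X'_k$. Two corrections, though. First, your displayed tail asymptotic and the subsequent limit have the signs reversed: the non-covering probability is $1-q_{n-k}(\ell)=\bP(t^{\ast}_{n-k}>\ell)=\ga_{n-k}\big(\sqrt{2\ell}-m_{n-k}\big)\approx \alpha_\ast\,\big(\sqrt{2\ell}-m_{n-k}\big)\,e^{-c_\ast(\sqrt{2\ell}-m_{n-k})}$, and $\sqrt{2T^{\tau_k(s)}_{u,k}}-m_{n-k}$ converges in distribution to $c_\ast k+g_u+y$ (your two slips cancel, so the final Laplace functional you write is the correct one, but both displays are wrong as stated). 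Second, the ``main obstacle'' you identify is not actually there: the subtree of depth $n-k$ hanging below $u\in V_k$, together with the edge from $u(k-1)$, is an exact copy of $\TT_{n-k}$, and excursions from $u(k-1)$ to $u$ are exactly root excursions of that copy, so $q_{n-k}(\ell)$ is literally $\bP(t^\ast_{n-k}\le \ell)$ — no transfer principle or comparison argument is needed (this is \eqref{dmart.10} in the paper). Nor does one need uniformity over an $O(\sqrt{s})$-wide range of $\ell$: after restricting to the event $A^{(n)}_k$ of \eqref{Ank-def}, whose probability tends to one in the iterated limit, all arguments of $\ga_{n-k}$ lie in the fixed compact window $w_k+y+[0,2c_\ast k]$, and there the two-sided bound $\alpha^{(-)}_k z e^{-c_\ast z}\le \ga_n(z)\le \alpha^{(+)}_k z e^{-c_\ast z}$ with $\alpha^{(\pm)}_k\to\alpha_\ast$ follows from \Cref{prop-limiting-tail-gff} together with the monotonicity of $z\mapsto\ga_n(z)$ alone (this is \eqref{eq:z-tail-bd}); importing the barrier estimates of \Cref{sec-re} or \Cref{prop-asymptotic-first-moment} is unnecessary here — those are inputs to the proof of \Cref{prop-limiting-tail-gff} itself, not to this reduction.
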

\begin{proof} For a possibly random, $\FF_k$-measurable $\tau$,  we set 
\begin{equation}\label{def:T-hat}
\wh{T}_u(\tau;s) := \frac{T_{u,|u|}^{\tau} - s}{\sqrt{2s}} \,,  
\end{equation}
in analogy to $\wh{T}_u(s;s)=\wh{T}(s)$ of  \eqref{std-occ}. In case
 $\frac{\tau_k(s)}{s} \stackrel{p}{\to} 1$ as $s \to \ff$, we get from Donsker's invariance principle that 
\begin{equation}\label{inv-brw}
\wh{T}_u(\tau_k(s);s) - \wh{T}_u(s;s)  - \wh{\tau}_k(s) 
\overset{p}{\underset{s\to \ff}{\longrightarrow}} 0\,,  \qquad \forall u \in \TT_k \,.
\end{equation}
Further,  \corOF{
\begin{equation}\label{eq:f_s}
f_s(x) := \sqrt{ 2(s+\sqrt{2s} x)} - \sqrt{2s} \overset{}{\underset{s\to \ff}{\longrightarrow}} x \,,
\end{equation}}
uniformly over bounded $x$. Hence, setting 
\begin{equation}\label{def:T-tilde}
\wt{T}_u(\tau;s) := \sqrt{2 T_{u,|u|}^{\tau}} - \sqrt{2s} = f_s( \wh T_u (\tau;s) ) \,,
\end{equation}
upon combining  \Cref{lem-wish} and \eqref{inv-brw}, we deduce from  \eqref{ct2.12b} that 
\begin{equation}\label{cnv-sqrt-tau-brw}
\Big\{ \wt{T}_u(\tau_k(s);s)  ,\, u\in V_k \Big\} 
\overset{dist}{\underset{s\to \ff}{\Longrightarrow}}
\lc g_u \,,\, u \in V_k \rc,
\end{equation}
whereas under \eqref{ct2.12} we merely replace $g_u$ by $g'_u$ on the \abbr{rhs}. Proceeding 
under the assumption \eqref{ct2.12b}, 
fix $y \in \bR$ and an integer $k \ge 1$, setting  
\begin{equation}\label{eq:zv-def}
z_u^{(n)} := \sqrt{2 T_{u,k}^{\tau_k(s_{n,y})}} - m_{n-k} \,, \qquad z_u^{(\infty)} := c_\ast k + g_u + y,\,  
\quad \forall u \in V_k \,, 
\end{equation}
with  $c_\ast$ as in \eqref{28.00}. 
For fixed $y$ and $k$, we have, using \eqref{eq:defs}, that
\[
\sqrt{2 s_{n,y}} - m_{n-k} - (c_\ast k + y) =
m_n - m_{n-k} -  c_\ast k = \frac{1}{c_\ast} \log (1-\frac{k}{n})_{ \stackrel{\longrightarrow}{\small{n\to\infty}}} 0\,. 
\]
Hence from \eqref{cnv-sqrt-tau-brw} at $s=s_{n,y}$ it follows that 
\begin{equation}\label{zvn-conv}
\{z_u^{(n)}, \; u \in V_k \}  \overset{dist}{\underset{n \to \ff}{\Longrightarrow}} \{z_u^{(\infty)}, \; u \in V_k \} .
\end{equation}
In particular, for $X_k$ of \eqref{dmart.1} and $\widetilde{X}_k$ of \eqref{tildeXk}
 we have that  
\begin{equation}\label{Xkn-conv}
X_k^{(n,y)} := \sum_{u \in V_k} z_u^{(n)} e^{-c_\ast z_u^{(n)}}  \overset{dist}{\underset{n \to \ff}{\Longrightarrow}}
( X_k + y \widetilde{X}_k ) e^{-c_\ast y} \,. 
\end{equation}
For any fixed $y \in \bR$,  we have by \eqref{Ak-def} and  \eqref{zvn-conv} that 
\begin{equation}\label{Ank-def}
\lim_{k \to \infty} \varliminf_{n \to \ff} \bP(A^{(n)}_k) = 1 \,, \;\; 
A^{(n)}_k = \{ z_u^{(n)}  \in  w_k + y + (0,2 c_\ast k) \,, \; \forall u \in V_k \} \,.
\end{equation}
Recalling that $\widetilde{X}_k \stackrel{a.s.} \to 0$ (see the line following \eqref{tildeXk}), and
the definition of $Y_\infty$ from \eqref{ct2.25}, we have 
in view of \eqref{Xkn-conv} that 
for any $\alpha_k \to \alpha_\ast$
\begin{align}\label{Yinf-conv}
\bP(Y_\infty\leq y) &= \lim_{k \to \ff}
\bE \Big[ {\bf 1}_{A_k}
\exp\{-\alpha_k (X_k + y \widetilde{X}_k) \mathrm{e}^{-c_{\ast}y}\}  \Big] \nn \\
& = \lim_{k \to \ff} 
{\mathop{\underline{\overline{\lim }}}_{n\to\ff}}
\bE \Big[  {\bf 1}_{A^{(n)}_k}  \exp\{-\alpha_k X^{(n,y)}_k \}  \Big]  \nn \\
& = \lim_{k \to \ff} 
{\mathop{\underline{\overline{\lim }}}_{n\to\ff}}
\bE \Big[  {\bf 1}_{A^{(n)}_k} \prod_{u \in V_k} \big(1-\alpha_k z_u^{(n)} e^{-c_\ast z_u^{(n)}} \big)
 \Big] \,,
\end{align}
where
${\mathop{\underline{\overline{\lim }}}_{n\to \ff}}f(n)$ 
stands for bounds given by both $\limsup_{n\rightarrow\infty} f(n)$
and $\liminf_{n\rightarrow\infty} f(n)$, and in the last equality of \eqref{Yinf-conv} we 
relied on having 
\[
\delta_k:= \sup_{n}  {\bf 1}_{A_k^{(n)}} \, \sup_{u \in V_k} \{ \alpha_k z_u^{(n)} e^{-c_\ast z_u^{(n)}} \}_{ \stackrel{\longrightarrow}{\small{k\to\infty}}} 0 \,,
\] 
as well as $e^{-a}\geq 1-a \geq e^{-a(1+\delta)}$ for $a\in [0,\delta \wedge 1/2]$.
For $u \in V_k$ let $V^u_n = \{ v \in V_n : v(k) = u  \}$ denote the leaves of the binary sub-tree of  
$\TT_n$ of depth $n-k$,
emanating from $u$, with $u(k-1)$ acting as its (extra) root. 
The event $\{t_n^\ast \le \tau\}$ of the \abbr{srw} reaching all of $V_n$ within its first $\tau$ excursions 
to $\rho$ is the intersection over $u \in V_k$ of the events of reaching all of $V^u_n$ 
within the first $T^{\tau}_{u,k}$ excursions of the \abbr{srw} between $u(k-1)$ and $u$. 
By the Markov property, for $\FF_k$-measurable $\tau$, conditionally on
$\FF_k$ 
the latter events are mutually independent, of conditional probabilities
$\bar \ga_{n-k}(T_{u,k}^{\tau})$ for $u \in V_k$ and
 $\bar \ga_n(s):=\bP(t^\ast_n \le s)$. 
Consequently, for $\tau=\tau_k(s_{n,y})$ we get that  
\begin{align}
\bP(t^{\ast}_n \le \tau |\,\FF_k) 
&= \prod_{u \in V_k}  \bar \ga_{n-k} (T_{u,k}^{\tau}) 
= \prod_{u \in V_k} \(1-\ga_{n-k}(z^{(n)}_u) \) \,, \quad 
\label{dmart.10}
\end{align}
for $\ga_n(z) := \bP   (\sqrt{2t^{\ast}_{n}} -m_n>z)$ and $z^{(n)}_u$ of \eqref{eq:zv-def}. 
\Cref{prop-limiting-tail-gff} and the monotonicity of 
$z \mapsto \ga_n(z)$
yield that for some $n_k<\infty$ and $\alpha^{(\pm)}_k \to \alpha_\ast$,
\begin{equation}\label{eq:z-tail-bd}
\alpha^{(-)}_k z e^{-c_\ast z}  \le  \ga_n(z) \le \alpha_k^{(+)} z e^{-c_\ast z} 
 \quad \forall n \ge n_k, \; \forall z \in w_k + y + [0,2 c_\ast k] .
\end{equation}
Under the event $A^{(n)}_k$, which is measurable on $\FF_k$, the latter bounds apply
for all $z=z^{(n)}_u$. Hence, we get  from \eqref{dmart.10} that 
\begin{equation*}
\begin{aligned}
& {\mathop{\underline{\lim }}_{n\to\ff}}
\bE \Big[  {\bf 1}_{A^{(n)}_k} \prod_{u \in V_k} \big(1- \alpha^{(+)}_k z_u^{(n)} e^{-c_\ast z_u^{(n)}} \big) \Big] 
 \le
{\mathop{\underline{\lim }}_{n\to\ff}} 
  \bP(t^{\ast}_n \le \tau_k(s_{n,y})\,;\, A^{(n)}_k)  \\
& \le {\mathop{\overline{\lim }}_{n\to\ff}} 
  \bP(t^{\ast}_n \le \tau_k(s_{n,y}); A^{(n)}_k) \le   
  {\mathop{\overline{\lim }}_{n\to\ff}} 
\bE \big[  {\bf 1}_{A^{(n)}_k} \prod_{u \in V_k} \big(1-\alpha^{(-)}_k z_u^{(n)} e^{-c_\ast z_u^{(n)}}\big) \big].
\end{aligned}
\end{equation*}
We now establish \eqref{ct2.17}, by taking $k \to \infty$ while utilizing \eqref{Ank-def} and \eqref{Yinf-conv}.

The same argument applies under \eqref{ct2.12}, now replacing 
$g_u$ by $g'_u$ in \eqref{eq:zv-def}, thereby changing $X_k$, $\wt X_k$ and $Y_\infty$ 
in \eqref{Xkn-conv} and \eqref{Yinf-conv}, to $X'_k$, $\wt X'_k$ and $Y'_\ff$.
\end{proof}

\section{Excursion counts to real time:\\  From \Cref{prop-eta-tau} to 
\Cref{theo-covertime}}
\label{sec-ctime}
 
\Cref{theo-covertime} amounts to showing that for any fixed $y \in \bR$ and $\ep>0$,
 \begin{eqnarray}
 \varlimsup_{n\to\ff} \bP (\CC'_{n}  \le 2 s_{n,y-2\ep})
 \le \bP(Y'_\infty  \le y) \le \varliminf_{n\to\ff} \bP(\CC'_{n} \le 2 s_{n,y+2\ep} ) \,,
\label{ct2.24}
\end{eqnarray} 
where throughout $s_{n,y}:=(m_n+y)^2/2$, as in \eqref{eq:defs}. To this end, let
\begin{equation}
R_n^s  := 2^{-n} \sum_{u \in \TT_n} T_{u,|u|}^s \,.
\label{ct2.9}
\end{equation} 
The \abbr{srw} on $\TT_n$ makes 
$2^{(n+1)} R^s_n$ steps during its first $s$ 
excursions from the root to itself. Thus,  $\{t^{\ast}_n \le \tau\} =\{\CC'_n \le  R_n^{\tau} \}$,
so for any random $t,\tau$,
\begin{equation}
\bP( t^{\ast}_ n   \le  \tau) - \bP( R_n^{\tau}   >  2 t) \le
\bP(\CC'_n  \le  2 t )\leq \bP( t^{\ast}_{n}   \le  \tau ) + \bP( R_{n}^{\tau}  < 2 t) \,.
\label{ct2.20}
\end{equation} 
Considering  \eqref{ct2.20} at $t=s_{n,y \pm 2\ep}$, the insufficient concentration of $R_n^\tau$ 
at the non-random $\tau=s_{n,y}$ rules out establishing \eqref{ct2.24} directly from \Cref{thm1.2}. 
We thus follow the approach of \cite[Section 9]{CLS}, in 
employing instead  \eqref{ct2.20} for $\tau=\tau_k(s_{n,y})$ and the $\FF_k$-measurable
\begin{equation}
\tau_k(s) := \inf \{\ell \in \bZ_+  \,|\, S_k^{\ell} \ge s\}\,, \qquad 
S_k^{\ell} := 2^{-k} \sum_{u \in V_k} T^\ell_{u,k}  \,.
 \label{ct2.11}
\end{equation}
Recall that $\bE [S_k^{1}]=1$ (see \eqref{ct2.6}), while
setting  $\bar \sigma_k^2 :=  \Var (\bar g_k) =  1 - 2^{-k}$ (see \eqref{ct10j}), 
and comparing \eqref{cov-brw} to \eqref{ct2.8}, we arrive at 
$\Var (S_k^{1}) = 2 \bar \sigma_k^2$. Hence, Donsker's invariance principle 
yields a coupling between the piece-wise linear interpolation $t \mapsto \wh S_{s,k} (t)$ 
of $\{ (S_k^t - t)/\sqrt{2s};\; t \in \bZ_+ \}$, and a 
standard Brownian motion $\{ W_\theta \}$, such that 
\begin{equation}\label{ct2.11b}
\sup_{\theta \in [0,2]} \big| \wh S_{s,k} (\theta s)
- \bar \sigma_k W_\theta \big| 
\overset{p}{\underset{s \to \ff}{\longrightarrow}} 0 \,.
\end{equation}
From \eqref{ct2.11} we see that $S_k^{\tau_k(s)} - s \ge 0$ is at most the total number of excursions from  $V_{k-1}$ to $V_k$ made by the  \abbr{srw} started at
some  $v\in V_k$, before hitting the root, plus $1$. The latter has exactly the law of 
 $2^k S_k^1$ given $S_k^1>0$. Thus, 
\begin{equation}\label{eq:ck-def}
c_k := \sup_{s \ge 0} \{ \bE [ S_k^{\tau_k(s)}  - s ] \}  \le 
\frac{\bE [ S_k^{1} ]}{\bP( S_k^1>0)} < \infty \,, 
\end{equation}
and for $\wh \tau_k(s)$ defined as in \eqref{ct2.12b},
one has when $s \to \infty$, that  
\begin{equation}\label{tauk-wlln}
 \wh S_{s,k} (\tau_k(s)) + \wh \tau_k(s) 
= \frac{S_k^{\tau_k(s)}-s}{\sqrt{2s}} 
\overset{p}{\longrightarrow} 0
\,, \qquad
\theta_s : = \frac{\tau_k(s)}{s} \overset{p}{\longrightarrow} 1 \,.
\end{equation}
In particular, considering \eqref{ct2.11b}  at  $\theta_s$, by  the continuity of 
$\theta \mapsto W_\theta$
\[
\wh S_{s,k} (\theta_s s)  
= \bar \sigma_k W_{\theta_s} +o_p(1) = \bar \sigma_k W_1 + o_p(1) =  \wh S_{s,k} (s) + o_p(1)\,.
\]
Since $\wh S_{s,k}(s)= \wh S_k(s)$ of \eqref{std-occ}, we conclude that 
$\{ \tau_k(s), s \ge 0 \}$ of \eqref{ct2.11} satisfy \eqref{ct2.12},
and with $|2 s_{n,y \pm 2 \ep} - 2 s_{n,y}|  \ge 4 \ep \sqrt{s_{n,y}}$
for  $n$ large enough,  we finish the proof of \Cref{theo-covertime} upon 
showing that for $s=s_{n,y}$ and any fixed $\ep >0$,
\begin{equation}
\lim_{k\to\ff} \varlimsup_{n \to\ff}  \bP\big(
|R_n^{\tau_k(s)} - 2 s| \ge 4 \ep \sqrt{s} \big) = 0    \,.
 \label{ct2.22}
\end{equation}
To this end,  recall that in view of \eqref{ct2.6} and \eqref{ct2.9}
 \begin{equation}
 r_j := \bE(R_j^1) = 2^{-j} |\TT_j|  = 2 - 2^{-j}  \,, 
\label{ct2.10}
\end{equation}
and similarly, by  \eqref{ct2.8} and \eqref{ct2.9} we get that 
\begin{align}
\sigma_n^2 :=  \Var(R_n^1) &=
2 \sum_{u, u' \in \TT_n} 2^{-2n} |u\wedge u'| 
\le 4 \,.
 \label{ct2.10c}
 \end{align}
Next, writing in short $\tau=\tau_k(s)$, we have for any $n \ge k$, the representation 
\[
R_n^{\tau} = 2^{k-n} R_k^{\tau} + r_{n-k} S_k^{\tau} + 2^{-k} \Delta_{k,n} (2^k S_k^{\tau}) \,,
\] 
where the random variable $\Delta_{k,n}(\ell)$ is
 the centered and scaled total time spent by the \abbr{srw} on $\TT_n$  below level $k$ during the first  $\ell$ excursions from $V_{k-1}$ to $V_k$.
 For fixed $\ell$,
$\Delta_{k,n}(\ell)
 \stackrel{(d)}{=} R_{n-k}^\ell -\bE(R_{n-k}^\ell)$ and has variance 
$\sigma_{n-k}^2 \ell \le 4 \ell$. Further, $\Delta_{k,n} (2^k S_k^{\tau})$ conditioned on  $\FF_k$ is distributed as
$R_{n-k}^\ell-\bE(R_{n-k}^\ell)$ with $\ell=2^k S_k^{\tau}$.  Recalling that $R_{k-1}^\ell = 2(R_k^\ell-S_k^\ell)$
and  utilizing \eqref{eq:ck-def}, we thus get by Markov's inequality (conditional on $\FF_k$), that  
\[
\delta_{k,n} := \bP(|R_n^{\tau} - 2^{k-n-1} R_{k-1}^{\tau} - 2 S_k^{\tau}| \ge \ep \sqrt{s} ) \le 
\frac{4 2^{-k}}{\ep^2} \frac{ \bE (S_k^\tau)}{s} = 
\frac{4 2^{-k}}{\ep^2} \big( 1 + \frac{c_k}{s} \big)   \,,
\]
goes to zero for $s = s_{n,y} \to \infty$ followed by $k \to \infty$. Also, by the union bound, 
\begin{align*}
\bP\big(
|R_n^{\tau} - 2 s| \ge 4 \ep \sqrt{s} \big)  \le \delta_{k,n} 
 + \bP(S_k^\tau - s \ge  \ep \sqrt{s} ) &+ \bP(\tau \ge 2s)
 \\&
 + \bP(2^{k-n-1} R_{k-1}^{2s} \ge \ep \sqrt{s}) \,.
\end{align*}
Next, employing Markov's inequality, we deduce that the last term goes to zero, since
$2^{k-n} \, s \, r_{k-1}/(\ep \sqrt{s}) \to 0$ for $s=s_{n,y}$ and $n \to \infty$.
By  \eqref{eq:ck-def} and having \abbr{whp}
 $\tau = \tau_k(s) < 2s$ (see  \eqref{tauk-wlln}), we thus 
arrive at \eqref{ct2.22}  and thereby conclude the proof of \Cref{theo-covertime}.

\section{Sharp right tail: auxiliary lemmas and proof of \Cref{prop-limiting-tail-gff}}\label{sec-rt}
\label{sec-limittail}
 
Hereafter we denote by 
$\bP_{s}$ probabilities of events occurring up to the completion of the first
$s$ excursions at the root and let $\eta_v (j):=\sqrt{2T^s_{v,j}}$
for $v \in V_k$, $j \le k$ and $T^s_{v,j}$
of \eqref{Tjsdef},
with the value of $s$ implicit. For $u \in V_{n'}$ where $n':=n-\ell$,
and $V^u_n := \{ v \in V_n : v(n') = u \}$, let
\begin{equation}\label{def:eta-sharp-u}
\eta^{\sharp}_\ell (u) := \min_{v\in V^u_n} \{ \eta_{v}(n) \} \,,
\end{equation}
denote the minimal (normalized) occupation time of
edges entering leaves of the sub-tree of depth $\ell$ rooted at $u$ (during the first $s$ excursions 
from the root),
abbreviating $\eta^\sharp_n$ for $\eta^\sharp_n(0)$.
Since $$\{t^{\ast}_{ n}> s\}= \{\min_{v \in V_n} \{T_{v,n}^{s}\}=0\},$$ \Cref{prop-limiting-tail-gff} 
amounts to  the claim
\begin{equation}
\label{eqforP3.1}
 \alpha_\ast =
\lim_{z\to \infty} z^{-1} \mathrm{e}^{c_\ast z}
{\mathop{\underline{\overline{\lim }}}_{n\to\ff}}
 \bP_{s_{n,z}}(\eta^{\sharp}_n =0),
\end{equation}
for $s_{n,z}$ and $c_\ast$ of \eqref{eq:defs} and
\eqref{28.00}, respectively.
Our proof of \eqref{eqforP3.1} is based on a refinement of the probability estimates 
of \cite[Section 5]{BRZ}, 
intersecting here the event $\{\eta^\sharp_n = 0\}$ with barrier events involving 
the (normalized) edge occupation times $\{ j \mapsto \eta_v(j), v \in \TT_n \}$.  
More precisely, we adapt the strategy of \cite[Section 3]{BDZconvlaw}, by
essentially bounding $\bP_{s_{n,z}} (\eta^\sharp_n=0)$ between the expectations
of counts $\Lambda_{n,\ell} \le \Gamma_{n,\ell}$ for two barrier type events,
which are equivalent at the claimed scale of asymptotic growth in $z$ 
(see \Cref{lem-Gamma-Lambda}). Our curved barrier event for $\Gamma_{n,\ell}$ 
is relaxed enough to deduce that the event $\{\Gamma_{n,\ell} \ge 1\}$ is for large $n$, $\ell$, 
about the same as having $\{\eta_n^\sharp=0\}$ (see \Cref{lem-G-neglig}). 
The straight barrier event for $\Lambda_{n,\ell}$ is strict enough to yield a 
negligible variance (see \Cref{lem-second-moment}), so its expectation serves
to lower bound $\bP_{s_{n,z}} (\eta^\sharp_n=0)$. Our claim 
\eqref{eqforP3.1} then follows from such a limit for $\bE_{s_{n,z}} [\Lambda_{n,\ell} ]$
(which is a consequence of \Cref{prop-asymptotic-first-moment}). Specifically,
for $s=s_{n,z}$  consider the excess
edge occupation times, over the 
barrier
\begin{equation}\label{eq:bar-st}
\bar \varphi_n(j) := \rho_n (n-j) \,, \quad \quad  j \in [0,n'] \,, \quad\quad n'=n-\ell\,.
\end{equation}
 In the sequel we show that the 
main contribution to $\{\eta_n^\sharp=0\}$ is due to not covering a sub-tree 
rooted at some $u \in V_{n'}$
while the edge occupation times along the geodesic to $u$  
exceed the barrier $\bar \varphi_n(\cdot)$ of \eqref{eq:bar-st}, 
with the excess  at the edge into $u$ further restricted to 
\begin{equation}\label{dfn:I_ell}
I_\ell:=\sqrt{\ell} \, [r^{-1}_\ell,r_\ell],   \qquad   r_\ell:=\sqrt{\log \ell}\,.
\end{equation}
To this end, let 
\begin{equation}\label{def:eta-hat}
\hat \eta_v(j) := \eta_v(j) - \bar \varphi_n(j) \,,
\end{equation}
considering for
$u \in V_{n'}$ the events
\begin{align}\label{eq-big-definition-sharp}
E_{n,\ell}(u) &: = 
 \bigcap_{0 \le j \le n'} \{\hat \eta_u(j) > 0 \} \bigcap \{\hat \eta_u (n')  \in  I_\ell,\, 
 \eta^\sharp_\ell (u) = 0 \}  \,,
\end{align}
and the corresponding counts
\begin{equation}\label{def:Lambda-n}
\Lambda_{n,\ell} := \sum_{u \in V_{n'}}{\bf 1}_{E_{n,\ell}(u)}\,.
\end{equation}
\begin{figure}
\begin{center}
\begin{tikzpicture}[scale=0.6]

\begin{scope}[shift={(-5,0)}]
    \draw [solid,->] (0.0,0) -- (14.4,0) node [ right]  {level};
    \draw [solid,->] (0,0) -- (0,9.2) node [below left] {};
  \draw [dotted] (13,0) -- (13,9);
    \draw[-] (0,7.8) .. controls (6,3) .. (13,0.3);
\draw[dashed] (0,8.7) .. controls (2,6.5) and  (4,7.1) .. (6,5.1);
\draw[dashed] (6,5.1) .. controls (8,4) and  (10,5) .. (13,1);
\draw[dotted,thick] (0,8.7) .. controls (2,5) and  (4,6.9) .. (6,5);
\draw[dotted,thick] (6,5) .. controls (8,3.2) and  (10,3) .. (13,1);
\draw[red] (13,1) ..controls (13.5,1.1) .. (14,1.4);
\draw[red] (13,1) ..controls (13.5,1.1) .. (14,0.8);
\draw[red] (13,1) ..controls (13.6,1.1) .. (14,0.4);
\draw[red] (13,1) ..controls (13.5,0.8) .. (14,1.3);
\draw[red] (13,1) ..controls (13.5,0.8) .. (14,0.3);
\draw[red] (13,1) ..controls (13.5,0.8) .. (14,0.7);
\draw[red,thick] (13,1) ..controls (13.4,0.4) .. (13.8,0);
\draw[red] (13,1) ..controls (13.4,0.6) .. (14,0.2);
 \draw[-{>[length=7,width=10]}, thick] (11,7) -- (13,1);
  \end{scope}
   \begin{scope}[shift={(-5,0)}]
\draw[-] (0,8) -- (14,0);
 \node at (14,-0.3) {\small $n$};
 \node at (13,-0.3) {\small $n'$};
 \node at (-0.5,8.1) {\small $m_n$};
  \node at (-1.1,7.7) {\small $m_n-h_\ell$};
   \node at (-1.1,8.7) {\small $m_n+z$};
   \node at (11,7.2) {$\rho_n \ell+r$};  
  \end{scope}


\end{tikzpicture}
\end{center}
\caption{Depiction of the events $E_{n,\ell}(u)$ (dashed line) and $F_{n,\ell}(u)$ (dotted line) for some 
$u\in V_{n'}$. In either case, the red paths emanating from
 level $n'=n-\ell$ denote excursion counts corresponding to different children of $u$. Note the curved vs. straight barrier and the excursion count that reaches $0$.}
\label{fig:EnlandFnl}
\end{figure}
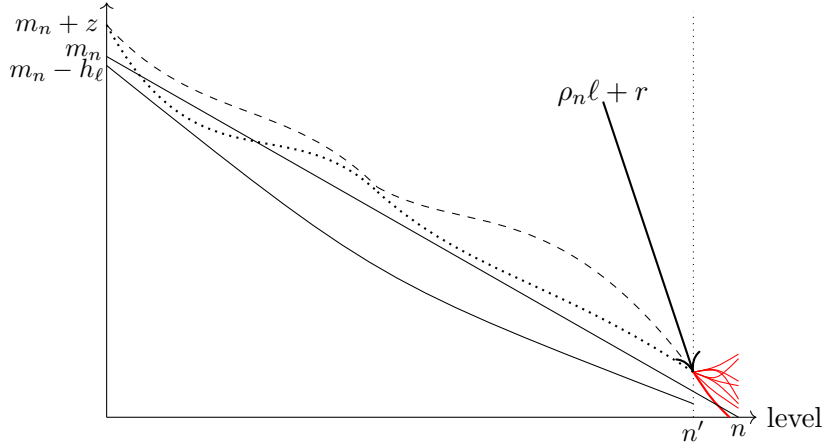
See Figure \ref{fig:EnlandFnl} for a pictorial illustration of the event $E_{n,\ell}(u)$.
As explained before, aiming first to upper bound 
$\bP_{s_{n,z}}(\eta^\sharp_n =0)$, we fix $\delta \in (0,\frac12)$
and for $k \in [1,n]$, $h \in [0,n-k]$, consider the curved, relaxed barriers
\begin{equation}\label{eq:barrier}
\varphi_{n,k,h} (j):=\bar \varphi_n (j) - \psi_{k,h}(j) \,, \quad  j \in [0,k] \,,  
\end{equation}
using hereafter  the notations
\begin{equation}\label{psi-def}
\psi_{k,h}(j):= h + j_k^\delta\,, \quad j_k:= j \wedge (k-j), \qquad j \in [0,k] \,.
\end{equation}
We further use the abbreviated notation 
\begin{equation}\label{dfn:psi-ell}
\psi_{\ell}(\cdot):=\psi_{n',h_\ell}(\cdot),   \quad \mbox{where} \quad h_\ell := \frac{1}{2} \log \ell \,,
\end{equation}
with $n'=n-\ell \ge 1$. 
Replacing the barriers of \eqref{eq:bar-st} by those of
\eqref{eq:barrier}, we then form the larger counts 
\begin{equation}
\Gamma_{n,\ell} = \sum_{u \in V_{n'}}
{\bf 1}_{F_{n,\ell}(u)}\,,
\end{equation}
where in terms of \eqref{def:eta-sharp-u},  \eqref{def:eta-hat} and \eqref{eq:barrier}, 
we define for each $u \in V_{n'}$ 
\begin{align}\label{eq-big-definition}
F_{n,\ell}(u)  &: = \bigcap_{0 \le j \le n'}
\{\hat \eta_{u}(j) + \psi_{\ell}(j) > 0  \}
\bigcap \{ \eta^\sharp_\ell (u) = 0 \} 
\,.
\end{align}
See Figure \ref{fig:EnlandFnl} for a pictorial  illustration of the event $F_{n,\ell}(u)$.
 If $\eta^\sharp_n=0$, then necessarily $\eta^\sharp_\ell(u)=0$ for some $u \in V_{n'}$
and either $F_{n,\ell}(u)$ occurs (so $\Gamma_{n,\ell} \ge 1$), or else the event 
$G_{n,\ell} := G_{n,n'}(h_\ell)$ must occur, where, see   Figure \ref{fig:Gnl},
\begin{align}\label{eq-def-G-N-prelim}
G_{n,k'} (h) &:= \bigcup_{u\in V_{k'}} \bigcup_{0 \leq j \leq k'}\{ \, \hat \eta_{u}(j) \le  -\psi_{k',h} (j)  \} \,.
\end{align}
Hence, for any $\ell$,
\begin{equation}\label{eq:ubd-by-Gamma}
\bE_{s_{n,z}} [ \Gamma_{n,\ell} ] \ge 
 \bP_{s_{n,z}} (\Gamma_{n,\ell} \geq 1)  \ge 
\bP_{s_{n,z}}(\eta^{\sharp}_n =0) - 
\bP_{s_{n,z}}(G_{n,\ell}) \,.
\end{equation}
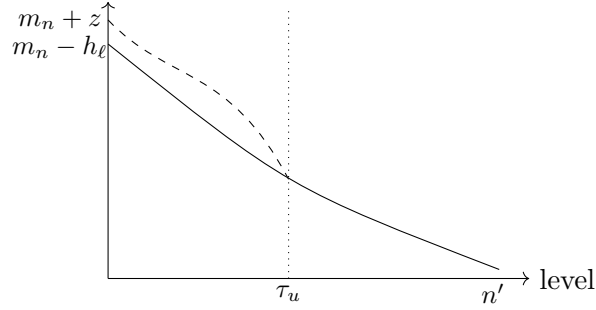
\begin{figure}
\begin{center}
\begin{tikzpicture}[scale=0.4]

\begin{scope}[shift={(-5,0)}]
    \draw [solid,->] (0.0,0) -- (14.0,0) node [ right]  {level};
    \draw [solid,->] (0,0) -- (0,9.2) node [below left] {};
  \draw [dotted] (6,0) -- (6,9);
    \draw[-] (0,7.8) .. controls (6,3) .. (13,0.3);
\draw[dashed] (0,8.6) .. controls (2,6.5) and  (4,7.1) .. (6,3.3);
  \end{scope}
   \begin{scope}[shift={(-5,0)}]
 \node at (6,-0.5) {\small $\tau_u$};
 \node at (12.8,-0.5) {\small $n'$};
  \node at (-1.6,7.7) {\small $m_n-h_\ell$};
   \node at (-1.6,8.6) {\small $m_n+z$};
  \end{scope}


\end{tikzpicture}
\end{center}
\caption{
 Depiction of an event from $G_{n,\ell}$ (dashed line) corresponding to some 
 $u\in V_{n'}$, $n'=n-\ell$. Note the curved barrier.
}
\label{fig:Gnl}
\end{figure}
Recall that by \cite[proof of Corollary 5.4]{BRZ}, 
for some $c'>0$ and all $z \ge 1$,
\begin{equation}\label{eq2.7}
\varliminf_{n \to \infty} \bP_{s_{n,z}}(\eta^{\sharp}_n =0) \ge c' z \mathrm{e}^{-c_\ast z} \,,
\end{equation}
so our next lemma, which is an immediate consequence of  \Cref{lem-a-priori2} below,
shows that the 
right-most term in \eqref{eq:ubd-by-Gamma} is negligible.
\begin{lemma}\label{lem-G-neglig}
We have that 
\begin{equation}\label{gee-final}
\lim_{\ell \to \infty} \sup_{z \ge 1} \{ z^{-1} e^{c_\ast z} \varlimsup_{n \to \infty} \bP_{s_{n,z}} (G_{n,\ell}) \} = 0 \,. 
\end{equation} 
\end{lemma}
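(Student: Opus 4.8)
The plan is to reduce \eqref{gee-final} to the forthcoming a-priori estimate \Cref{lem-a-priori2}, so the real content here is organizational: the lemma states that the curved-barrier crossing event $G_{n,\ell}=G_{n,n'}(h_\ell)$ of \eqref{eq-def-G-N-prelim} contributes negligibly to $\bP_{s_{n,z}}(\eta^\sharp_n=0)$ at the scale $z\mathrm{e}^{-c_\ast z}$, once $\ell$ is large. First I would unwind the definition: $G_{n,k'}(h)$ is a union over $u\in V_{k'}$ and levels $j\le k'$ of the events $\{\hat\eta_u(j)\le -\psi_{k',h}(j)\}$, i.e.\ the normalized occupation time $\eta_u(j)=\sqrt{2T^{s}_{u,j}}$ dips a distance at least $\psi_{k',h}(j)=h+j_{k'}^\delta$ below the straight barrier $\bar\varphi_n(j)=\rho_n(n-j)$ somewhere along the geodesic to $u$. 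Since $h_\ell=\tfrac12\log\ell\to\infty$ and the extra slack $j_{n'}^\delta$ is symmetric and large in the bulk, the event forces a substantial downcrossing; \Cref{lem-a-priori2} is exactly the quantitative barrier bound that controls the $\bP_{s_{n,z}}$-probability of such a downcrossing. I would therefore state \eqref{gee-final} as a direct corollary: apply \Cref{lem-a-priori2} with the barrier gap $h=h_\ell$, extract the resulting bound, observe it is of the form $z\mathrm{e}^{-c_\ast z}\,\varepsilon(\ell)$ with $\varepsilon(\ell)\to0$ uniformly over $z\ge1$, divide by $z\mathrm{e}^{-c_\ast z}$, take $\limsup_{n\to\infty}$ then $\sup_{z\ge1}$, and finally let $\ell\to\infty$.

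The key steps, in order, are: (i) rewrite $\bP_{s_{n,z}}(G_{n,\ell})$ via a union bound over $u\in V_{n'}$ and over the location $j$ of the first downcrossing, or better, keep it as a single barrier event and quote \Cref{lem-a-priori2} verbatim — whichever matches the precise form in which that lemma is stated; (ii) identify the parameters: the walk is run for $s=s_{n,z}$ excursions, the target level is $n'=n-\ell$, the straight barrier has height $m_n+z$ at level $0$ (up to the $O(1/n)$ correction in $\rho_n$ versus $c_\ast$, which is harmless here) decreasing to roughly $m_n-h_\ell$ near level $n'$, and the forbidden region is below that barrier shifted down by $h_\ell+j_{n'}^\delta$; (iii) read off from \Cref{lem-a-priori2} that the probability is at most $C\,z\,\mathrm{e}^{-c_\ast z}\,\mathrm{e}^{-c h_\ell}$ (or more generally $z\mathrm{e}^{-c_\ast z}$ times a quantity tending to $0$ in $\ell$), using that an extra downward displacement of size $h_\ell$ below the barrier costs a factor exponentially small in $h_\ell$, and that $\mathrm{e}^{-c h_\ell}=\ell^{-c/2}\to0$; (iv) note the bound is uniform in $z\ge1$ because the $z$-dependence factors out as $z\mathrm{e}^{-c_\ast z}$ with the remaining constant independent of $z$; (v) assemble the three limits.

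I expect the only real subtlety — hardly an obstacle, since \Cref{lem-a-priori2} is designed precisely for this — is bookkeeping the discrepancy between the ``barrier as seen from level $0$'' normalization used in \Cref{lem-a-priori2} and the ``barrier $\bar\varphi_n$ indexed by absolute level $j$'' used here, together with the extra convex slack $j_{n'}^\delta$ in $\psi_\ell$; one must check that $j_{n'}^\delta\ge0$ only helps (it enlarges the forbidden gap, hence can be dropped for an upper bound, or retained and absorbed into the constant via the standard barrier computation with a concave perturbation of exponent $\delta<1/2$). Also I would confirm that the $O(1/n)$ gap between $\rho_n$ and $c_\ast$ in the slope of $\bar\varphi_n$ shifts the barrier by only $O(\ell)=O(1)$ at level $n'$, which is negligible compared to $h_\ell\to\infty$ and is in any case subsumed by the tolerances in \Cref{lem-a-priori2}. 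Once these normalization matches are in place, \eqref{gee-final} follows immediately, and in particular, combined with \eqref{eq2.7}, shows that $\bP_{s_{n,z}}(G_{n,\ell})$ is $o(\bP_{s_{n,z}}(\eta^\sharp_n=0))$ as needed for \eqref{eq:ubd-by-Gamma}.
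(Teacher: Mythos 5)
Your proposal is correct and follows essentially the same route as the paper: the paper's proof simply sets $h=h_\ell=\tfrac12\log\ell$ in the bound \eqref{gee.2} of \Cref{lem-a-priori2}, obtaining $\bP_{s_{n,z}}(G_{n,\ell})\le c(z+\log\ell)\,\ell^{-c_\ast/2}e^{-c_\ast z}e^{-z^2/(8n)}$, so that $z^{-1}e^{c_\ast z}\bP_{s_{n,z}}(G_{n,\ell})\le c(1+\log\ell)\ell^{-c_\ast/2}$ uniformly in $z\ge1$, which vanishes as $\ell\to\infty$. Your bookkeeping remarks (uniformity in $z$, the role of $h_\ell\to\infty$, and that the curved slack is already built into \Cref{lem-a-priori2}) match the intended argument.
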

Combining \eqref{eq:ubd-by-Gamma}-\eqref{gee-final}, we arrive at
\begin{equation}
\label{eq3.60-jay-nn}
\varlimsup_{\ell \to \ff}
\varlimsup_{z\to \infty} \varlimsup_{n\to \infty}
\frac{\bP_{s_{n,z}}(\eta^{\sharp}_n =0)}
{\bE_{s_{n,z}}[\Gamma_{n,\ell}]}\le 1 \,.
\end{equation}
Restricting hereafter to $\delta \in (0,\frac{1}{6})$ 
allows us to further show in  \Cref{zerosubsection} the following equivalence of  first moments
(c.f. \eqref{delta-under-16} for why we take $\delta$ small).
\begin{lemma}\label{lem-Gamma-Lambda}
For any $\delta \in (0,\frac{1}{6})$ we have that 
\begin{equation}
  \label{eq-clear240113}
\lim_{\ell \to \ff} \varlimsup_{z\to \infty} \{
z^{-1} e^{c_\ast z} \varlimsup_{n\to \infty} \bE_{s_{n,z}} [\Gamma_{n,\ell} - 
\Lambda_{n,\ell}] 
\} = 0 \,.
\end{equation}
\end{lemma}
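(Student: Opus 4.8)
The plan is to estimate $\bE_{s_{n,z}}[\Gamma_{n,\ell}-\Lambda_{n,\ell}]$ by decomposing the event $F_{n,\ell}(u)$ according to whether the stricter requirements defining $E_{n,\ell}(u)$ fail. Since $E_{n,\ell}(u)\subseteq F_{n,\ell}(u)$ (the straight barrier $\bar\varphi_n$ lies below the relaxed curved barrier $\varphi_{n,n',h_\ell}=\bar\varphi_n-\psi_\ell$, and $\psi_\ell\ge 0$), the difference $\mathbf 1_{F_{n,\ell}(u)}-\mathbf 1_{E_{n,\ell}(u)}$ is supported on the union of three ``bad'' sub-events: (i) the occupation-time path $j\mapsto\hat\eta_u(j)$ dips below the straight barrier $\bar\varphi_n$ at some $0\le j\le n'$ while still staying above the curved one (``entropic repulsion failure''); (ii) the terminal value $\hat\eta_u(n')$ is positive but lies \emph{below} $I_\ell=\sqrt\ell\,[r_\ell^{-1},r_\ell]$, i.e. is atypically small; or (iii) $\hat\eta_u(n')$ lies \emph{above} $I_\ell$, i.e. is atypically large. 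In each case we must bound the expected number of $u\in V_{n'}$ on which that sub-event holds together with $\{\eta^\sharp_\ell(u)=0\}$, and show that, after multiplying by $z^{-1}e^{c_\ast z}$, the bound tends to $0$ as $n\to\infty$, then $z\to\infty$, then $\ell\to\infty$.

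The key tool is a first-moment/barrier estimate: for a single $u\in V_{n'}$, conditionally on the occupation times along the geodesic $\rho\to u$, the probability that the sub-tree of depth $\ell$ rooted at $u$ is not covered factorizes through the Markov property and is governed by a Bessel-type (0-dimensional Bessel) hitting computation. Combining this with barrier estimates for the path $j\mapsto\eta_u(j)$ — of exactly the type catalogued in Section~\ref{sec-re}, refined from \cite{BRZ} — one gets, schematically,
\begin{equation}
\bE_{s_{n,z}}\Big[\sum_{u\in V_{n'}}\mathbf 1_{\{\text{path of }u\text{ stays above }\bar\varphi_n,\ \hat\eta_u(n')\approx b,\ \eta^\sharp_\ell(u)=0\}}\Big]
\;\lesssim\; z\,e^{-c_\ast z}\,\Psi_\ell(b)\,,
\nonumber
\end{equation}
where $\Psi_\ell(b)$ is an explicit (Gaussian-type, times Bessel-cover) weight in the terminal excess $b$, integrable in $b$ over $(0,\infty)$, and the total mass over $b\in I_\ell$ recovers (up to $1+o(1)$) the $\bE[\Lambda_{n,\ell}]$ asymptotics. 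Then: for region (iii), $\int_{b>r_\ell\sqrt\ell}\Psi_\ell(b)\,db\to 0$ as $\ell\to\infty$ by the Gaussian tail beyond $r_\ell\sqrt\ell=\sqrt{\ell\log\ell}$; for region (ii), $\int_{0<b<\sqrt\ell/r_\ell}\Psi_\ell(b)\,db\to 0$ because the cover-failure probability of a depth-$\ell$ sub-tree whose root-edge excess is only $o(\sqrt\ell)$ is negligible on the relevant scale (this is where $r_\ell=\sqrt{\log\ell}\to\infty$ is used); for region (i), the standard ballot/barrier bound shows that forcing the path to return below the straight barrier $\bar\varphi_n$ after having been pushed up costs an extra factor that vanishes as the curvature gap $\psi_\ell(j)=h_\ell+j_{n'}^\delta$ grows — here $h_\ell=\tfrac12\log\ell\to\infty$ and the interior term $j_{n'}^\delta$ supply the needed room, and the restriction $\delta\in(0,\tfrac16)$ keeps $j_{n'}^\delta$ small enough that the relaxed-barrier count $\Gamma_{n,\ell}$ is not inflated beyond the $z e^{-c_\ast z}$ scale (cf.\ \eqref{delta-under-16}).

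I expect the main obstacle to be region (i): quantifying the cost of the ``barrier relaxation'' uniformly in $n$ and in $z\ge 1$, i.e. showing that the expected number of $u\in V_{n'}$ whose path stays above the curved barrier $\varphi_{n,n',h_\ell}$ but dips below the straight barrier $\bar\varphi_n$ somewhere, while the sub-tree at $u$ stays uncovered, is $o(z e^{-c_\ast z})$ after $n\to\infty$ then $z\to\infty$ then $\ell\to\infty$. This requires a two-point/barrier decomposition: split at the first level $j^\ast$ where the path goes below $\bar\varphi_n$, use a ballot estimate on $[0,j^\ast]$ (above the curved barrier, below the straight one at the endpoint) and another on $[j^\ast,n']$ (from below the straight barrier back up, then terminal value in the allowed range, then sub-tree uncovered), and sum over $j^\ast$. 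The geometry is favorable because the gap $\psi_\ell(j)$ between the two barriers is $\ge h_\ell\to\infty$ in the bulk, which forces an $e^{-c\,h_\ell^2}$-type penalty near the endpoints and a polynomially decaying penalty in the interior, summably small once multiplied by the per-site cost; the bookkeeping to make this uniform in $z$ (and to ensure the leading $ze^{-c_\ast z}$ is matched, not exceeded) is the delicate part. The refined barrier estimates of \Cref{sec-re} and \Cref{sec-sharpbar}, together with the a-priori bound \eqref{eq2.7} for calibration, are exactly what make this go through.
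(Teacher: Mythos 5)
Your plan is essentially the paper's own proof. Using the Markov property at level $n'$, the paper bounds $\bE_{s_{n,z}}[\Gamma_{n,\ell}-\Lambda_{n,\ell}]$ by the sum of two terms: the contribution of terminal values $r\notin I_\ell$ under the relaxed-barrier measure $\mu_{n,z}$ (your regions (ii)+(iii), the paper's ${\sf I}_n$), and the contribution of paths that end in $I_\ell$ but cross below the straight barrier (your region (i), the paper's ${\sf II}_n$). For the latter the paper decomposes at the \emph{last} down-crossing $\tau=\max\{j<n':\hat\eta_v(j)\le 0\}$ rather than your first crossing --- a cosmetic difference --- and then applies the two-leg bound \eqref{eq:bd-phi} of \Cref{lem-apriori-c} and sums over the crossing level as in \eqref{eq:neglig-IIn-final}. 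So the decomposition, the tools (the barrier estimates of \Cref{sec-re} together with \eqref{35.1}), and the order of limits all match.

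Two of your stated mechanisms are off in direction, and are worth correcting before executing the plan. First, in region (i) the cost of the dip does \emph{not} ``vanish as the curvature gap $\psi_\ell$ grows'': enlarging the gap enlarges $F_{n,\ell}(u)$ and hence $\Gamma_{n,\ell}$, so a larger gap only hurts. What makes the crossing term negligible is that $\psi_\ell(k)=h_\ell+k_{n'}^\delta$ is small compared with the diffusive scale $\sqrt{k_{n'}}$, which produces the factor $\psi_\ell(k)^3\,k_{n'}^{-3/2}$ in \eqref{eq:bd-phi}; this is summable in $k$ precisely because $\delta<\tfrac16$ (see \eqref{delta-under-16}), while near the two endpoints one uses instead the Gaussian penalties $e^{-z^2/(16k)}$ and $e^{-r^2/(4(n'-k))}$, with $r\in I_\ell$ and $z$ large supplying $b_\ell=\ell/\log\ell$ as in \eqref{dfn:b-ell}. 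Second, in region (ii) it is not that the non-covering probability of a depth-$\ell$ subtree with root excess $o(\sqrt\ell)$ is negligible --- by \eqref{35.1} that probability is in fact \emph{larger} for small $r$, because of the factor $e^{-c_\ast r}$; rather, its product with the barrier weight $\mu_{n,z}(H_r)$ yields the density $(r+2h_\ell)^2\ell^{-3/2}e^{-r^2/(8\ell)}$ of \eqref{mun-bd}, whose sum over the window $-h_\ell\le r\le\sqrt\ell/r_\ell$ (and over $r\ge r_\ell\sqrt\ell$, your region (iii)) tends to $0$ as $\ell\to\infty$. With these two mechanisms stated correctly, your decomposition and the estimates you cite carry the proof through exactly as in the paper.
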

\noindent
Now, from  \eqref{eq3.60-jay-nn} and \eqref{eq-clear240113}, we have  the upper bound
\begin{equation}
\label{eq3.60nn}
\varlimsup_{\ell \to \ff}
\varlimsup_{z\to \infty} \varlimsup_{n\to \infty}
\frac{\bP_{s_{n,z}}(\eta^{\sharp}_n =0)}
{\bE_{s_{n,z}}[\Lambda_{n,\ell}]}\le 1 \,.
\end{equation}
\noindent 
For such expected counts with straight barriers,  we establish in \Cref{sec-3.5}, 
using the connection to the $0$-Bessel process, 
the following large $n$ and $z$ asymptotic.
\begin{proposition}\label{prop-asymptotic-first-moment}
There exists $\alpha_\ell >0$ such that 
\begin{align}\label{eqmainresultinsecondsection}
\lim_{\ell \to \infty}  \alpha_{\ell}^{-1} \big\{
{\mathop{\underline{\overline{\lim }}}_{z \to\ff}}
z^{-1} \mathrm{e}^{c_\ast z} 
{\mathop{\underline{\overline{\lim }}}_{n\to\ff}}
\bE_{s_{n,z}}[\Lambda_{n,\ell}] \big\} = 1 
 \,,
\end{align}
where by \eqref{eq2.7} and \eqref{eq3.60nn}, 
$\liminf \{\alpha_{\ell}\}$ is strictly positive.
\end{proposition}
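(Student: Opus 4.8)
\textbf{Proof plan for Proposition \ref{prop-asymptotic-first-moment}.}

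The plan is to compute $\bE_{s_{n,z}}[\Lambda_{n,\ell}]$ exactly by linearity over $u \in V_{n'}$, using the i.i.d.\ structure of root excursions and the Markovian structure of the $\TT_k$-projection, and then take limits in the stated order ($n\to\infty$, then $z\to\infty$, then $\ell\to\infty$). Each term $\bP_{s_{n,z}}(E_{n,\ell}(u))$ factors, via the Markov property at level $n'$, as the product of two pieces: first, the probability of a \emph{barrier-crossing event} for the path $j\mapsto\eta_u(j)$ along the geodesic from the root to $u$ — namely that $\hat\eta_u(j)=\eta_u(j)-\bar\varphi_n(j)>0$ for all $0\le j\le n'$ and $\hat\eta_u(n')\in I_\ell$ — and second, conditionally on the endpoint value $\eta_u(n')=\rho_n\ell+r$ (equivalently $T^s_{u,n'}$ having this normalized value), the probability that a fresh SRW in the depth-$\ell$ subtree rooted at $u$ fails to cover $V^u_n$ within $T^s_{u,n'}$ excursions, i.e.\ $\eta^\sharp_\ell(u)=0$. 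The first piece is a barrier estimate for the excursion-count chain started from $\eta_u(-1)=\sqrt{2s}=\sqrt{2 s_{n,z}}\approx m_n+z$ and forced to stay above the linear barrier $\bar\varphi_n$; here I would invoke the close relation between this Markov chain and the $0$-dimensional Bessel process (as announced before the proposition and to be developed in \Cref{sec-sharpbar}) to get the probability up to a $(1+o(1))$ factor. The Bessel comparison should yield an asymptotic of the form $C\,(z/\sqrt{n})\,e^{-c_\ast z}\cdot(\text{entrance density into }I_\ell\text{ at level }n')$, with the $\log n$ correction in $\rho_n$ precisely arranged (as in \cite{BRZ}, \cite{DingZeitouni-ASharpEstimateForCoverTimesOnBinaryTrees}) so that after summing $2^{n'}$ terms the polynomial-in-$n$ factors cancel and a clean $z e^{-c_\ast z}$ survives.

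The key steps, in order, would be: (i) write $\bE_{s_{n,z}}[\Lambda_{n,\ell}]=2^{n'}\,\bP_{s_{n,z}}(E_{n,\ell}(u_0))$ for a fixed $u_0\in V_{n'}$, by symmetry; (ii) condition on $\FF_{n'}$ and split the event using the Markov property into the geodesic barrier event times the subtree non-covering event, integrating over the endpoint $\eta_{u_0}(n')\in I_\ell$; (iii) for the subtree non-covering probability, recognize that given $\eta_{u_0}(n')=\rho_n\ell+r$, the conditional law is exactly that of $\{\eta^\sharp_\ell=0\}$ for a depth-$\ell$ tree started from the corresponding number of root excursions, so it equals $\bar\ga_\ell(\cdot)$ evaluated at that normalized height $\approx \rho_n\ell+r$; since $\ell$ is fixed while $n\to\infty$, with $\rho_n\to c_\ast$, this converges to a fixed function $q_\ell(r)$ of $r$ alone; (iv) for the geodesic barrier event, apply the sharp Bessel-type barrier estimate to extract the factor $z e^{-c_\ast z}(1+o(1))$ times a density $p_\ell(r)\,dr$ for the entrance location $r$ into $I_\ell$ at the last level, uniformly enough in $z$ and $n$ after the $2^{n'}$ multiplication absorbs the normalization; (v) combine to get $z^{-1}e^{c_\ast z}\bE_{s_{n,z}}[\Lambda_{n,\ell}]\to \alpha_\ell:=\int_{I_\ell} p_\ell(r)\,q_\ell(r)\,dr$ in the iterated-limit sense claimed, and note $\alpha_\ell>0$ since both factors are positive on $I_\ell$; (vi) observe that \eqref{eq3.60nn} combined with the lower bound \eqref{eq2.7} forces $\liminf_\ell\alpha_\ell>0$, since otherwise $\bE_{s_{n,z}}[\Lambda_{n,\ell}]$ would be too small to dominate $\bP_{s_{n,z}}(\eta^\sharp_n=0)\gtrsim z e^{-c_\ast z}$.

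The main obstacle I expect is step (iv): obtaining the barrier-crossing probability for the excursion-count Markov chain \emph{up to a $(1+o(1))$ multiplicative factor} — not merely up to constants — uniformly over the relevant range of $z$ (large) and $n$ (large), including the delicate endpoint localization to $I_\ell=\sqrt\ell\,[r_\ell^{-1},r_\ell]$ with $r_\ell=\sqrt{\log\ell}$. This is genuinely harder than the two-sided bounds of \cite{BRZ}, because the constant $\alpha_\ell$ must be pinned down exactly; it requires a sharp comparison of the chain's transition kernel with that of the $0$-Bessel process (controlling the discrepancy over $n'$ steps so it contributes only $1+o(1)$), together with an entrance-law computation for Bessel bridges/excursions above a linear barrier. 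A secondary technical point is ensuring the interchange of limits in \eqref{eqmainresultinsecondsection} is legitimate — i.e.\ that the $o(1)$ errors in the Bessel comparison are uniform enough that the $\varlimsup/\varliminf$ over $z$ and $n$ can be taken in the stated nested order — which I would handle by isolating the $z$- and $n$-dependence into explicit ratios that tend to their limits monotonically or with controlled rate, deferring the heaviest estimates to \Cref{sec-sharpbar} and the a-priori barrier bounds to \Cref{sec-re}.
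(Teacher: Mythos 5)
Your plan follows the same route as the paper: by symmetry and the Markov property at level $n'$ the first moment is written as $2^{n'}$ times a single-geodesic barrier probability weighted by the depth-$\ell$ non-covering probability at the endpoint (this is exactly \eqref{3a.48}), the sharp constant is extracted through the connection with the $0$-dimensional Bessel process, and $\alpha_\ell$ emerges as an integral of an entrance density against the subtree non-covering probability (compare your $\int_{I_\ell}p_\ell(r)q_\ell(r)\,dr$ with \eqref{3a.54m}); the positivity of $\liminf\alpha_\ell$ via \eqref{eq2.7} and \eqref{eq3.60nn} is also how the paper argues. So as a strategy there is nothing to object to.

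However, what you label step (iv) and defer as ``the main obstacle'' is not a technical afterthought: it \emph{is} the proof, and your proposal does not carry it out. The paper's Section 6 supplies precisely this content, and in a specific way your sketch does not anticipate: (a) the excursion-count chain is not merely ``compared'' to the Bessel process up to $(1+o(1))$ kernel errors; rather one uses the exact interlaced Markov chain of \cite[Lemma 3.1]{BRZ}, in which the conditional law of $\eta(j)$ given $Y_j$ is $\sqrt{2\,\mathrm{Poisson}(Y_j^2/2)}$ --- this is why the limiting constant involves the Poisson-smoothed $\wt\ga_\ell$ of \eqref{3a.49}, via \eqref{dfn:wt-g}, and not the naive limit $q_\ell$ of $\ga_{n,\ell}$ that your step (iii) proposes; (b) the curvature corrections $\pm\psi_\ell$ needed to pass between the chain and the Bessel process are absorbed into $(1\pm\vep_\ell)$ factors (\Cref{theo-sbarrierd}); (c) the Bessel barrier problem is converted to a Brownian one by Girsanov, with the density $\sqrt{x/W_{n'}}\,e^{-\frac38\int_1^{n'}W_s^{-2}ds}$ contributing only $1-\vep_\ell$ on the event (\Cref{theo-sbarrierbmd}); (d) the Brownian probabilities are then computed exactly for linear barriers by the reflection principle and compared to the curved ones by Bramson's estimate (\Cref{lem-evening1}); and (e) the random initial condition $Y_1=\sqrt{2s}+U_s$ must be handled, producing the factor $\bE[e^{-c_\ast U_\infty}]=e^{c_\ast^2/4}$ in \eqref{eq:Us-limit}, with uniformity supplied by the sub-Gaussian bound \eqref{eq-U-sub-gaussian}. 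None of (a)--(e) is established, or even sketched quantitatively, in your proposal, so as written it identifies where the difficulty lies but does not prove the proposition, and in particular it does not pin down $\alpha_\ell$ (your $p_\ell,q_\ell$ are never defined beyond their intended role, and the Poissonization that the method forces into $\alpha_\ell$ is absent).
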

\noindent
As shown in \Cref{firstsubsection},
the barrier event we have added in the definition \eqref{eq-big-definition-sharp} of $E_{n,\ell}(u)$ 
yields the following tight control on the second moment of $\Lambda_{n,\ell}$.
\begin{lemma}\label{lem-second-moment}
We have that 
\begin{equation}
\label{newequation72}
\lim_{\ell \to \ff}
\varlimsup_{z\to \infty} \big\{ z^{-1} e^{c_\ast z} \varlimsup_{n \to \infty} 
\bE_{s_{n,z}} [\Lambda_{n,\ell} (\Lambda_{n,\ell}-1) ]  \big\} = 0\,.
\end{equation}
\end{lemma}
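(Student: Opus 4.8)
\textbf{Proof plan for Lemma \ref{lem-second-moment}.}

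The plan is to estimate $\bE_{s_{n,z}}[\Lambda_{n,\ell}(\Lambda_{n,\ell}-1)] = \sum_{u \neq u' \in V_{n'}} \bP_{s_{n,z}}(E_{n,\ell}(u) \cap E_{n,\ell}(u'))$ by conditioning on the split level. For a pair $u,u' \in V_{n'}$ with $w = u \wedge u'$ at level $m = |w|$, the two edge-occupation-time paths $j \mapsto \eta_u(j)$ and $j \mapsto \eta_{u'}(j)$ agree up to level $m$ and then evolve conditionally independently (given the excursion count at $w$), in the sense already exploited via the Markov property in \eqref{dmart.10}. So I would write the joint probability as an expectation, over the common path up to level $m$ staying above the straight barrier $\bar\varphi_n$ and hitting some value $\eta_w(m)$, of the product of two conditional probabilities — each being roughly $\bP(\text{path from level } m \text{ to } n' \text{ stays above } \bar\varphi_n, \text{ lands in } I_\ell \text{ at } n', \text{ subtree not covered})$, starting from $\eta_w(m)$. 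The key structural input is the ballistic/Bessel-type barrier estimates from \Cref{sec-re} (refining \cite[Section 5]{BRZ}) that give, for the relevant range of starting and ending heights $a,b$ above the barrier over a time interval of length $L$, an upper bound of order $C\,\frac{a(b+1)}{L^{3/2}} e^{-(b^2-\text{stuff})/(2L)}\cdots$ — i.e., the standard ballot-type factor with the extra $(1+o(1))$ room coming from the gap between the straight barrier in $E_{n,\ell}$ and the curved relaxed barrier, plus the non-covering probability of a depth-$\ell$ subtree which contributes the $z e^{-c_\ast z}$-type decay.

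The main steps, in order: (i) decompose the double sum according to $m = |u \wedge u'|$, from $m=0$ up to $m = n'-1$, so that $\bE_{s_{n,z}}[\Lambda_{n,\ell}(\Lambda_{n,\ell}-1)] = \sum_{m=0}^{n'-1} 2^m \cdot (\text{number of ordered pairs branching at level } m \text{ per ancestor}) \cdot \bP(\cdots)$; the combinatorial count of pairs $(u,u')$ with $u\wedge u'$ a given level-$m$ vertex is $\asymp 2^{2(n'-m)}$. (ii) For each such pair, split the path estimate at level $m$: bound $\bP_{s_{n,z}}(E_{n,\ell}(u)\cap E_{n,\ell}(u'))$ by $\bE_{s_{n,z}}\big[ {\bf 1}_{\{\text{common path}\ge \bar\varphi_n \text{ on } [0,m]\}} \, q_u \, q_{u'}\big]$ where $q_u,q_{u'}$ are conditional barrier-plus-noncovering probabilities for the two disjoint sub-excursions over $[m,n']$, and likewise over $[n',n]$ for the subtree non-covering. (iii) Use the barrier estimates to bound the common-path contribution and each $q_u$; the product of the two ballot factors over the disjoint intervals $[m,n']$ produces a gain of a factor like $2^{-(n'-m)}$ relative to the first-moment bound $\bE_{s_{n,z}}[\Lambda_{n,\ell}]$, after summing over the landing heights in $I_\ell$ (this is where one sees that the straight barrier forces enough separation that the two paths cannot both stay low cheaply). (iv) Sum over $m$: the early-split terms ($m$ small) contribute $\asymp \bE_{s_{n,z}}[\Lambda_{n,\ell}]^2$ times a summable-in-$m$ factor, hence after dividing by $\bE_{s_{n,z}}[\Lambda_{n,\ell}]$ (which is $\asymp z e^{-c_\ast z}$ by \Cref{prop-asymptotic-first-moment}) and using that this quantity tends to $0$, while the late-split terms ($m$ close to $n'$) are handled by the extra smallness coming from the restriction $\hat\eta_u(n') \in I_\ell$ together with $r_\ell = \sqrt{\log\ell} \to \infty$, which is exactly the $\ell$-dependent gain that makes the $\lim_{\ell\to\infty}$ work.

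The hard part will be step (iii)–(iv): getting the barrier/non-covering estimates to combine so that, after summing over the split level $m$ and over the discretized heights at level $n'$, the double-sum is genuinely of smaller order than $z e^{-c_\ast z}$ as $n\to\infty$, then $z\to\infty$, then $\ell\to\infty$ — in particular making sure the contribution from splits at levels $m$ comparable to $n'$ does not blow up, which is why the window $I_\ell = \sqrt{\ell}\,[r_\ell^{-1}, r_\ell]$ with $r_\ell\to\infty$ was inserted into the definition of $E_{n,\ell}(u)$ in the first place. The bookkeeping of the two excursion counts being conditionally independent given $\FF_m$ (the $\TT_m$-projection), and transferring the a-priori barrier estimates from \Cref{sec-re} (which are stated for a single path) to the two-path setting, is routine but must be done carefully; I expect to invoke \Cref{lem-a-priori2} (or its analog) once for the common segment and once for each of the two branches, exactly as in the first-moment computation but now squared over the post-split interval.
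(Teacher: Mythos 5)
Your skeleton is the same as the paper's: decompose the off-diagonal sum according to the split level $k'=|u\wedge u'|$ (about $2^{2n'-k'}$ ordered pairs), use the Markov property at the split vertex to write each pair probability as an integral of the \emph{square} of a single-branch barrier-plus-noncovering probability (the quantity $\theta_{n,k,\ell}$ of \eqref{eq:bd-theta}, $k=n'-k'$) against the common-path measure $\nu_{n,k',z}$ of \eqref{dfn:nu-z}, and exploit the window $I_\ell$ to control splits very close to level $n'$. This is precisely the structure of \eqref{newequation73} and the estimates that follow it, so at the level of strategy you are on the paper's track.

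However, the quantitative mechanism you propose in steps (iii)--(iv) is not correct, and as described the argument would not close. First, there is no ``gain of a factor like $2^{-(n'-m)}$'': the pair entropy $2^{2(n'-m)}$ is exactly cancelled by the factor $2^{-(n'-m)}$ carried by \emph{each} branch probability (see \eqref{eq:bd-theta}), and what survives is only polynomial. Concretely, the common path reaches height $i$ above the straight barrier at the split with weight of order $(1+i)e^{c_\ast i}$ (see \eqref{eq-oferlau1}), while the two branches pay $\big((1+i)e^{-c_\ast i}\big)^2$, so a split at distance $k$ from $n'$ contributes roughly $z e^{-c_\ast z}\,k_{n'}^{-3/2}$ with $k_{n'}=k\wedge(n'-k)$ --- i.e.\ \emph{first-moment} order times a polynomial factor, not $\bE_{s_{n,z}}[\Lambda_{n,\ell}]^2$ times a summable factor (the latter is true only for splits at bounded distance from the root). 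Hence dividing by $\bE_{s_{n,z}}[\Lambda_{n,\ell}]$ does not make the early/middle terms vanish; the second moment is genuinely of the same order as the first moment, and the required $o(1)$ must be extracted, as in the paper, from three separate regimes: (a) splits with $k_{n'}\ge\ell$ sum to $O(\ell^{-1/2})\, z e^{-c_\ast z}$; (b) splits within distance $\ell$ of the root admit the bound $c\,e^{-c_\ast z}$ \emph{without} the factor $z$, so their normalized total is $O(\ell/z)\to0$ as $z\to\infty$; (c) splits within distance $\ell$ of $n'$ require both the bound $\sqrt{\ell}\,k^{-3}$ for $k\ge \ell^{1/3}$ and, for $k<\ell^{1/3}$, the crude estimate $2^k\sup_y\theta_{n,k,\ell}(y)\le 4^{\ell^{1/3}}\sup_{r\in I_\ell}\ga_{n,\ell}(r)\to0$ (where \eqref{35.1} and $r\ge\sqrt{\ell}/r_\ell$ beat the entropy), combined with the identity \eqref{theta-first-mom} to compare with $\bE_{s_{n,z}}[\Lambda_{n,\ell}]$ and with \eqref{eq-right-tail-ex} to bound it. Your plan correctly anticipates regime (c) and the role of $I_\ell$, but the claimed $2^{-(n'-m)}$ gain and the ``$\asymp\bE[\Lambda]^2$'' description of the early splits are the gap: carried out as written they would at best yield $O(z e^{-c_\ast z})$, not the needed $o(z e^{-c_\ast z})$ in the iterated limit $n\to\infty$, $z\to\infty$, $\ell\to\infty$.
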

\noindent
Note that $\Lambda_{n,\ell} \ge 1$ implies having $\eta_v(n)=0$ for some $v \in V_n$,
that is, having $\eta^\sharp_n=0$. Hence, with $\Lambda_{n,\ell}$ integer valued, 
for any choice of $\ell$,
\begin{align}\label{eq:2mom-lbd}
\bP_{s_{n,z}}(\eta^{\sharp}_n =0) \geq \bP_{s_{n,z}}(\Lambda_{n,\ell} \ge 1)
\geq \bE_{s_{n,z}} [\Lambda_{n,\ell}] - \bE_{s_{n,z}}[\Lambda_{n,\ell} (\Lambda_{n,\ell}-1)] \,.
\end{align}
Having a positive $\liminf \{\alpha_\ell \}$,  the latter bound, together 
with \eqref{eqmainresultinsecondsection} and  \eqref{newequation72}, imply that 
\begin{equation}
\label{eq3.61nn}
\varliminf_{\ell \to \infty} \varliminf_{z\to \infty}  \varliminf_{n\to\ff}
\frac{\bP_{s_{n,z}}(\eta^{\sharp}_n =0)}
{\bE_{s_{n,z}}[\Lambda_{n,\ell}]}\ge 1 \,.
\end{equation}
\begin{proof}[Proof of \Cref{prop-limiting-tail-gff}] Comparing first \eqref{eq3.60nn} to \eqref{eq3.61nn} and then
with \eqref{eqmainresultinsecondsection}, we  conclude that
\begin{equation}\label{eq:punch-line}
\lim_{\ell \to \infty}  \alpha_{\ell}^{-1} \big\{
{\mathop{\underline{\overline{\lim }}}_{n\to\ff}}
 z^{-1} \mathrm{e}^{c_\ast z} 
{\mathop{\underline{\overline{\lim }}}_{n\to\ff}}
\bP_{s_{n,z}} (\eta^\sharp_n = 0) \big\} = 1 \,.
\end{equation}
Necessarily $\alpha_\ell \to \alpha_\ast$ for which \eqref{eqforP3.1} holds
(with $\alpha_\ast>0$  in view of \eqref{eq2.7} and $\alpha_\ast<\infty$ by 
\cite[Proposition 5.2]{BRZ}).
\end{proof}


\section{Barrier bounds for excursion counts}\label{sec-re}

We keep the barrier sequences of \eqref{eq:barrier} and 
all other related notation from \Cref{sec-rt}. Further, with $\rho_n \to c_\ast > 1.1$, see \eqref{28.00}, \abbr{wlog}
we restrict to $n \ge n_\ast \ge 64$ with $\rho_n \ge \rho_\ast =: 1.1$, starting at 
the following a-priori bound on the events $G_{n,k'}(h)$ from \eqref{eq-def-G-N-prelim}. 
Recall the notation $s_{n,z}$ of 
\eqref{eq:defs}.
\begin{lemma}\label{lem-a-priori2} 
For some $c<\infty$, any $n \ge n_\ast$, $z,k' \ge 1$ and $h \in [0,n-k']$,
\begin{equation}
\bP_{s_{n,z}} (G_{n,k'}(h))  \le c (z+h) e^{ -c_\ast (z+h)} e^{-(z+h)^2/(8n)} \,.
\label{gee.2}
\end{equation}  
\end{lemma}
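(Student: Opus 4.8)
The plan is to bound the probability of the union event $G_{n,k'}(h)=\bigcup_{u\in V_{k'}}\bigcup_{0\le j\le k'}\{\hat\eta_u(j)\le-\psi_{k',h}(j)\}$ by reducing it, via the tree structure, to a one-dimensional barrier-crossing estimate for the process $j\mapsto\eta_u(j)=\sqrt{2T^{s}_{u,j}}$ along a single geodesic, and then summing over the $2^{k'}$ choices of $u\in V_{k'}$. Recall that $s=s_{n,z}=(m_n+z)^2/2$, so $\sqrt{2s}=m_n+z$, and the barrier is $\bar\varphi_n(j)=\rho_n(n-j)$ with $\bar\varphi_n(0)=m_n$. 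Projecting the \abbr{srw} on $\TT_n$ to a geodesic, the count $T^s_{u,j}$ is (as in the proof of \Cref{lem-wish}) a functional of a one-dimensional walk, and $\eta_u(\cdot)$ behaves like a time-changed $0$-dimensional Bessel-type process started near $\sqrt{2s}=m_n+z$; the event inside the union says this process drops below the straight line $\bar\varphi_n(j)-\psi_{k',h}(j)$ at some level $j\le k'$. So the single-$u$ probability is a lower-barrier-crossing probability for a Bessel-like process whose endpoint value and target line differ by roughly $z+h$ at level $0$ (since $\sqrt{2s}-\bar\varphi_n(0)+\psi_{k',h}(0)=z+h$) and by $\bar\varphi_n(k')-\bar\varphi_n(0)+\cdots$ at the far end.

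The key steps, in order: (i) Fix $u\in V_{k'}$ and apply the union bound over $u$, so that it suffices to show $\bP_{s_{n,z}}(\exists\, j\le k':\hat\eta_u(j)\le-\psi_{k',h}(j))\le c\,2^{-k'}(z+h)e^{-c_\ast(z+h)}e^{-(z+h)^2/(8n)}$; the factor $2^{-k'}$ must come from the cost of the occupation-time process staying high (near the linear barrier $\rho_n(n-\cdot)$ with slope $\rho_n\approx c_\ast=\sqrt{2\log2}$) along the whole geodesic, which is exactly where the $e^{-c_\ast(\cdot)}$-type Girsanov/Gaussian cost $2^{-k'}\sim e^{-c_\ast^2 k'/2}$ appears. (ii) Translate to the Bessel picture as in \cite[Section 5]{BRZ}: write $\eta_u(j)$ started from level $k'$ backwards to level $0$, and use the known barrier estimates for Bessel processes to control $\bP(\exists j:\eta_u(j)\le\bar\varphi_n(j)-\psi_{k',h}(j))$. (iii) Extract the three factors: the linear term $(z+h)$ is the standard entropic/ballot prefactor; the exponential $e^{-c_\ast(z+h)}$ comes from the gap at the root endpoint together with the first-moment normalization; and the Gaussian term $e^{-(z+h)^2/(8n)}$ is the additional cost for a walk of length $\sim n$ to move a distance $\sim(z+h)$ — this is where the $1/(8n)$, rather than $1/(2n)$, enters, reflecting the $\sqrt{2}$ in $\eta=\sqrt{2T}$ (variance of the underlying walk is twice that of the \abbr{brw}, cf. \eqref{ct2.8}). (iv) Check uniformity in $k'\in[1,n]$ and $h\in[0,n-k']$; the worst case is $k'$ close to $n$, and the $\psi_{k',h}$ perturbation ($j^\delta_{k'}$ with $j_{k'}=j\wedge(k'-j)$, $\delta<1/2$) is subdiffusive so it only contributes constants.

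I expect the main obstacle to be step (iii)–(iv): getting the \emph{precise} exponent $e^{-(z+h)^2/(8n)}$ with a clean constant $c$ uniform over all admissible $k',h,z,n\ge n_\ast$, rather than a cruder bound with an extra $\log$ or a worse Gaussian constant. This requires the sharp form of the Bessel barrier estimates from \cite{BRZ} (with explicit control of both the polynomial prefactor and the Gaussian tail as the displacement grows with $z+h$), and careful bookkeeping of how the relaxation $\psi_{k',h}$ interacts with the two regimes $z+h\lesssim\sqrt{n}$ (where the Gaussian factor is $\Theta(1)$) and $z+h\gtrsim\sqrt{n}$ (where it genuinely decays). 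Since the claim only asserts the existence of \emph{some} finite $c$, I would aim to be generous with constants everywhere except the rate $c_\ast$ and the Gaussian constant $1/(8n)$, which are the only features used downstream in \Cref{lem-G-neglig} (there one needs $\bP_{s_{n,z}}(G_{n,\ell})$ to be $o(z e^{-c_\ast z})$ as $\ell\to\infty$, i.e. one needs the extra $e^{-c_\ast h}$ with $h=h_\ell=\tfrac12\log\ell\to\infty$, which the bound \eqref{gee.2} supplies).
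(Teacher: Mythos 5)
Your step (i) contains a genuine gap that the remaining steps cannot repair. The intermediate claim there --- that for a single $u\in V_{k'}$ one has $\bP_{s_{n,z}}\big(\exists\, j\le k':\hat\eta_u(j)\le-\psi_{k',h}(j)\big)\le c\,2^{-k'}(z+h)e^{-c_\ast(z+h)}e^{-(z+h)^2/(8n)}$ --- is false, and the heuristic you give for the factor $2^{-k'}$ (``the cost of staying high along the whole geodesic'') misreads the event: $G_{n,k'}(h)$ only asks for a down-crossing somewhere, it imposes no constraint to stay near the barrier, and staying well above the barrier is the \emph{typical} behaviour of $j\mapsto\eta_u(j)$ (whose mean stays at $\sqrt{2s_{n,z}}=m_n+z$ while the barrier $\bar\varphi_n(j)-\psi_{k',h}(j)$ recedes at rate $\rho_n$). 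Concretely, a crossing at level $j$ requires a downward displacement of about $\rho_n j+z+h+j_{k'}^\delta$ over $j$ roughly unit-variance steps, which costs about $\exp\big(-(\rho_n j+z+h)^2/(2j)\big)\approx 2^{-j}e^{-c_\ast(z+h)}e^{-(z+h)^2/(2j)}$; for small $j$ and bounded $z+h$ this is of order one, not $O(2^{-k'})$. Hence the single-geodesic bound you postulate fails, and the blanket union bound over all $2^{k'}$ vertices of $V_{k'}$ is far too lossy, because the crossing event at level $j$ depends only on the level-$j$ ancestor, which is shared by $2^{k'-j}$ of the $u$'s.

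The paper's proof fixes precisely this accounting. It decomposes $G_{n,k'}(h)$ according to the first crossing level $\tau_u=k$, observes that $\{\tau_u=k\}$ is a function of $u(k)$ alone, and union-bounds over the $2^k$ vertices of $V_k$ only. That factor $2^k$ is cancelled by the $2^{-k}$ appearing in the barrier estimate \eqref{eq-oferlau1} for $q_{n,k,z}(i;h')$ with $h'=\psi_{k',h}(k)$ (the probability of staying above the curved barrier up to level $k$ while landing in the window $H_{b+i}$ is at most $c_1 2^{-k} z_{h'}e^{-c_\ast z_{h'}}e^{-z_{h'}^2/(8n)}g_{2n}(i+1)$); a one-step crossing bound from \cite[Lemma 4.6]{BK} contributes $e^{-i^2/4}$, making the sum over $i$ finite, and the curvature term $k_{k'}^\delta$ inside $h'$ supplies the extra decay $e^{-c_\ast k_{k'}^\delta}$ that makes the sum over $k\le k'$ converge to a constant, yielding \eqref{gee.2}. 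Your steps (ii)--(iv) point at the right ingredients (the barrier estimates adapted from \cite{BRZ} and the origin of each factor), but without the first-crossing decomposition and the level-$k$ union bound the argument does not go through; note also that no Bessel/Girsanov comparison is needed for this lemma --- that machinery is reserved for the sharp first-moment asymptotics of \Cref{prop-asymptotic-first-moment}.
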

\begin{proof}[Proof of \Cref{lem-G-neglig}]
Setting $h=h_\ell=
 \frac{1}{2} \log \ell$ in \eqref{gee.2}, results with 
\begin{align}
\bP_{s_{n,z}}\big(G_{n,\ell}\big)  &\le 
c(z+\log \ell) \ell^{-c_\ast/2}  e^{ -c_\ast z}e^{-z^2/(8n)}\,,
\label{gee.3}
\end{align}	
so taking $\ell \to \infty$ 
establishes \eqref{gee-final}.
\end{proof}
Before embarking on the proof of \Cref{lem-a-priori2}, we deduce from it 
certain useful a-priori tail bounds on the non-covering events $\{\eta^\sharp_\ell = 0 \}$.
\begin{corollary}
\label{lem-prelim-taillower}\label{lem2.7}\label{cor: Upper Bound cover} 
For some $c<\infty$ and  all  $n \ge n_\star$, $z\geq 1$,
\begin{equation}\label{eq-right-tail-ex}
\bP_{s_{n,z}}(\eta^{\sharp}_n =0) \le  c z e^{ -c_\ast z} e^{-z^2/(8n)}\,.
\end{equation}
Further, for some $\hat \ell$ finite, 
any $\hat \ell \le \ell \leq n/\log n$ and $r \ge - h_\ell$,
\begin{equation}
\ga_{n,\ell}(r) := \bP_{[(\rho_{n}\ell+r)^{2}/2]}(\eta^{\sharp}_{\ell} =0) \le 
c \ell^{-1} (r+\log \ell) e^{-c_\ast r} e^{-r^{2}/(8 \ell)} \,.
		\label{35.1}
\end{equation}
\end{corollary}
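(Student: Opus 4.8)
The plan is to derive both bounds of \Cref{cor: Upper Bound cover} from \Cref{lem-a-priori2}, since the non-covering event is essentially contained in the barrier events $G_{n,k'}(h)$ up to an event of comparable probability. For \eqref{eq-right-tail-ex}, first I would observe that $\{\eta^\sharp_n = 0\}$ means some leaf $v \in V_n$ is not reached during the first $s_{n,z}$ excursions, i.e. $\eta_v(n) = 0$. Along the geodesic from the root to $v$ the normalized occupation times $j \mapsto \eta_v(j)$ start (at the edge above the root) near $m_n + z$ and end at $\eta_v(n) = 0$, so the path $\hat\eta_v(j) = \eta_v(j) - \bar\varphi_n(j)$ starts near $z$ and ends at $-\bar\varphi_n(n) = 0$ up to the discrepancy between $m_n$ and $\rho_n n$, which is $o(1)$. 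Hence either the path dips below the straight-line barrier $\bar\varphi_n$ by some margin $\psi_{n,0}(j)$ at some level, putting us inside $G_{n,n}(0)$ (or more precisely a version with $k' = n$, $h$ a small constant absorbing boundary effects), or it stays above it all the way, which is impossible if $\eta_v(n) = 0$ because $\bar\varphi_n(n) = 0$ while $\psi$ is bounded below. A cleaner route: apply \eqref{gee.2} directly with $k' = n$ (or $k' = n - 1$), $h$ a fixed constant, noting $\{\eta^\sharp_n = 0\} \subseteq G_{n,n}(h)$ up to the trivial fact that reaching $0$ at level $n$ forces $\hat\eta_v(n) \le -\psi_{n,h}(n)$ once $\psi_{n,h}(n) = h \le $ the gap; then \eqref{gee.2} at these parameters gives exactly $c z e^{-c_\ast z} e^{-z^2/(8n)}$ with the $\log$-type correction $z + h$ absorbed into $cz$ since $h = O(1)$ and $z \ge 1$.

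For \eqref{35.1}, the strategy is the same but applied on a sub-tree of depth $\ell$ rather than $n$, with the starting height $\rho_n \ell + r$ playing the role of $m_n + z$. I would set up the correspondence $z \leftrightarrow r$, $n \leftrightarrow \ell$, but must be careful because here the relevant barrier has slope $\rho_n$ (the slope inherited from the ambient depth-$n$ tree), not $\rho_\ell$, and the initial condition is $(\rho_n \ell + r)^2/2$ excursions rather than $s_{\ell, r}$. So I would first note that $\sqrt{2 \cdot [(\rho_n\ell+r)^2/2]} = \rho_n \ell + r + O(1) = \bar\varphi_n(0)$ evaluated on the depth-$\ell$ tree (whose root edge is at ambient level $n - \ell = n'$, so $\bar\varphi_n(n') = \rho_n \ell$), meaning the normalized occupation time at the top of this sub-tree is $r$ above the barrier $\bar\varphi_n$ restricted to $[n', n]$. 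Then non-covering of the sub-tree forces the centered path to fall below the barrier before level $n$, i.e. to enter (a translated copy of) $G_{n, \ell}(h)$ — here the allowed downward margin is $\psi$ with the constant $h$ chosen to handle the $r \ge -h_\ell$ regime where the path may legitimately start slightly below the nominal barrier. Invoking \eqref{gee.2} with parameters $(n, k', z, h)$ matched to $(n, \ell, r + h_\ell, \text{const})$ — so that the combination $z + h$ there becomes $r + \log\ell$ here and $(z+h)^2/(8n)$ becomes $r^2/(8\ell)$ — yields the stated bound, with the prefactor $\ell^{-1}$ coming from the $e^{-c_\ast(z+h)}$ term contributing $\ell^{-c_\ast h_\ell/\text{stuff}}$... actually the $\ell^{-1}$ should come from the $e^{-c_\ast r}$ when $r$ is replaced properly, or from a union bound over the $2^\ell$ leaves balanced against the barrier cost; I would track this carefully, likely extracting it as $e^{-c_\ast \log\ell \cdot (\text{const})}$ after shifting $r \mapsto r + h_\ell$ inside the exponent and using $c_\ast/2 > 1$... hmm, more honestly the $\ell^{-1}$ is the analogue of what in \eqref{gee.3} was $\ell^{-c_\ast/2}$, here with the sub-tree depth itself being $\ell$ so $e^{-z^2/(8n)}|_{z = \log\ell, n = \ell}$-type terms and the $h_\ell = \frac12\log\ell$ shift combine to beat $\ell$.

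The main obstacle I anticipate is bookkeeping the shift between the nominal barrier $\bar\varphi_n$ and where the occupation-time path actually starts and ends, together with the boundary corrections $m_n - \rho_n n = \frac{1}{c_\ast}\log(1 - \ell/n)$-type terms and the role of $\psi_{k',h}$ at the endpoints $j = 0$ and $j = k'$ (where $j_k = 0$, so $\psi_{k',h}(j) = h$). One must verify that the inclusion $\{\eta^\sharp_\bullet = 0\} \subseteq G_{n,\bullet}(h)$ genuinely holds for a \emph{fixed finite} $h$ (independent of $n, z, \ell, r$) — this is where the restriction $\ell \le n/\log n$ and $r \ge -h_\ell$ enter, ensuring the start and end heights of the relevant path segment differ from the barrier by $O(1)$ quantities that $h$ can dominate. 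A secondary point is that \eqref{gee.2} is stated for $z, k' \ge 1$ and $h \in [0, n-k']$, so I need $r + h_\ell \ge 1$ (true once $\ell \ge \hat\ell$ large and $r \ge -h_\ell$, with a separate trivial treatment of the borderline) and the chosen constant $h$ to lie in the admissible range, which holds since $k' = \ell \le n/\log n \ll n$. Once these inclusions are set up the bounds follow by direct substitution into \eqref{gee.2}, exactly as \Cref{lem-G-neglig} was deduced above, so I do not expect the estimates themselves to be the difficulty — only the careful alignment of the geometric picture.
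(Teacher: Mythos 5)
Your treatment of \eqref{eq-right-tail-ex} is essentially the paper's argument: the inclusion of $\{\eta^\sharp_n=0\}$ in a $G$-event at $k'=n$ followed by \eqref{gee.2}. One correction there: you cannot take "$h$ a fixed constant". At $j=k'=n$ the gap between $\eta_v(n)=0$ and the barrier is exactly zero (since $\bar\varphi_n(n)=0$ and $\psi_{n,h}(n)=h$), and moreover \eqref{gee.2} only allows $h\in[0,n-k']=\{0\}$ when $k'=n$; so the inclusion and the application of \Cref{lem-a-priori2} force $h=0$, which is what the paper uses and which suffices.

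For \eqref{35.1}, however, there is a genuine gap: you never identify the substitution that actually produces the prefactor $\ell^{-1}$, and the mechanisms you float are quantitatively wrong. The paper's step is to set $z:=r+(\log\ell-1)/c_\ast$ and observe that $\rho_n\ell+r=c_\ast\ell-\frac{\ell\log n}{c_\ast n}+r\ge m_\ell+z$ precisely because $\ell\le n/\log n$ and because $m_\ell$ carries the correction $-\frac{\log\ell}{c_\ast}$; monotonicity of non-covering in the number of excursions then lets one apply \eqref{eq-right-tail-ex} \emph{at depth $\ell$} with this $z$, and the whole factor $\ell^{-1}$ comes from $e^{-c_\ast z}=e\,\ell^{-1}e^{-c_\ast r}$, while $z\le r+\log\ell$ gives the stated polynomial factor and $e^{-z^2/(8\ell)}\le C e^{-r^2/(8\ell)}$ on the range $r\ge-h_\ell$. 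Your proposed matchings miss this: taking "$z\leftrightarrow r+h_\ell$, $h$ const" in \eqref{gee.2} would only yield $e^{-c_\ast h_\ell}=\ell^{-c_\ast/2}$, and your appeal to "$c_\ast/2>1$" is false ($c_\ast=\sqrt{2\log 2}\approx 1.18$, so $c_\ast/2\approx 0.59$), so that route falls short of $\ell^{-1}$; a union bound over $2^\ell$ leaves is hopeless at this scale; and keeping the first slot equal to $n$ in \eqref{gee.2} gives a Gaussian factor $e^{-(\cdot)^2/(8n)}$, not the required $e^{-r^2/(8\ell)}$. Relatedly, your check of the admissibility condition is off: $r\ge -h_\ell$ gives $r+h_\ell\ge 0$, not $\ge 1$; what one actually needs is $z\ge 1$, which holds for $\ell\ge\hat\ell$ exactly because $1/c_\ast>1/2$ (this is where the paper's choice $\hat\ell=n_\star\vee\exp(2(c_\ast+1)/(2-c_\ast))$ comes from). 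In short, the second bound is not a re-run of the $G$-event argument with ad hoc shifts; it is a reduction to the first bound at depth $\ell$ via the inequality $\rho_n\ell+r\ge m_\ell+z$, and without that step your proof of \eqref{35.1} does not close.
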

\begin{proof} The event $\eta^\sharp_n =0$ amounts to $\eta_v(n)=0$ for some $v \in V_n$.
With $\varphi_{n,n,0}(n)=0$ (see \eqref{eq:barrier}-\eqref{psi-def}), this implies that 
$\eta_v(n) \le \varphi_{n,n,0}(n)$ and consequently that 
$G_{n,n}(0)$ also occurs (take $j=k'=n$ 
in \eqref{eq-def-G-N-prelim}). 
That is $\{\eta^\sharp_n = 0\} \subseteq G_{n,n}(0)$, so
the bound \eqref{eq-right-tail-ex} follows from \eqref{gee.2}.
Proceeding to prove \eqref{35.1}, setting $\hat \ell := n_\star \vee \exp(2(c_\ast + 1)/(2-c_\ast))$, 
one easily
checks that $z = r + (\log \ell - 1)/c_\ast \ge 1$ whenever 
$r \ge - \frac{1}{2} \log \ell$ and $\ell \ge \hat \ell$. If further
$1 \le \ell \le n/\log n$, then 
\[
\rho_{n} \ell + r =  c_\ast \ell - \frac{\ell \log n}{c_\ast n} + r 
\ge m_\ell + z \,,
\] 
so \eqref{eq-right-tail-ex} at $n=\ell$ and such $z$ yields the bound \eqref{35.1},
possibly with $c \mapsto e c$.
\end{proof}
\noindent
We recall \eqref{def:eta-hat},  \eqref{psi-def} and take throughout 
\begin{equation}\label{hx-def}
H_x:=[x,x+1] \,.
\end{equation}
The key to 
this section are
the following a-priori barrier  estimates adapted from  \cite[Section 5]{BRZ} (though it is 
advised to skip the proofs at first reading).
\begin{lemma}\label{lem:barrier-est}
Let  $g_m(i):=i \exp(c_\ast i + i^2/m)$, $\beta_{n,k} := \frac{k}{n} \log n - \log k$
and $z_h:=z+h$. For some $c_1 \ge 1$, all $z \ge 1$, $k \ge 0$, $h \in [0,n-k)$ and $i \in \bZ_+$
\begin{align}\label{eq-oferlau1-def}
q_{n,k,z} & (i;h) 
:= \bP_{s_{n,z}} (\min_{j \le k} \{ \hat \eta_v(j) + \psi_{k,h}(j)  \} \ge 0,  \; \hat \eta_v(k) \in H_{i-h})  \\
\le &  c_1 2^{-k} \big( \frac{z_h}{\sqrt{k_n}} \wedge k \big) \, e^{\beta_{n,k}} \, e^{- c_\ast z_h} 
e^{-z_h^2/(4m)} \, g_m (i+1) \,, \;\;  \forall m \in [2k,n^2] 
\label{eq-oferlau1}
\end{align}
(replacing for $k=0$ the ill-defined factor $\big(\frac{z_h}{\sqrt{k_n}} \wedge k\big) e^{\beta_{n,k}}$ by $1$).

Likewise, for $i, k'  \in \bZ_+$, $n' = k'+k \in (k',n)$, $m \in [2k,(n-k')^2]$ and $z \ge 0$,
 \begin{align}\label{eq-amirlau1}
  p_{n,k,z} (i)  &:=   
\bP(\min_{j \in (k',n']} \{ \hat \eta_{v}(j) \}  \ge  0, \hat \eta_v (n')  \in  H_i \,|\, \hat \eta_v(k') = z) \nn \\ 
& \le c_1 
 \frac{2^{-k} e^{\beta_{n,k}}}{\sqrt{k_{n-k'}}} (z \vee 1) e^{- c_\ast z} 
e^{-(z \vee 1)^2/(4m)} \, g_m (i+1) \,.
\end{align}
The bound \eqref{eq-amirlau1} applies also to  $z \in [- \rho_n k,0]$, now with $m=-4k$. 
\end{lemma}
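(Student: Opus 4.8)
The plan is to reduce everything to a one–dimensional random walk estimate and then to a Bessel–type barrier calculation, following the template of \cite[Section 5]{BRZ}. First I would fix $v\in V_n$ and note that, projecting the \abbr{srw} on $\TT_n$ onto the geodesic from $\rho$ to $v$, the process $j\mapsto T^{s}_{v,j}$ (for $j$ running from $0$ up to $n$) is, under $\bP_{s_{n,z}}$, a Markov chain whose increments are governed by the geometric hitting laws computed in the proof of \Cref{lem-wish} (namely $\bP(T_j\ge 1)={\sf p}_j=1/(j+1)$ and a geometric law with the same success probability conditionally on $T_j\ge 1$). Passing to $\eta_v(j)=\sqrt{2T^{s}_{v,j}}$, this chain is, up to explicit error terms, a discretization of the $0$-dimensional Bessel process; this is exactly the correspondence invoked just before \Cref{prop-asymptotic-first-moment} and developed in \Cref{sec-sharpbar}. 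The starting value is $\eta_v(0)=\sqrt{2s_{n,z}}=m_n+z$, so in the centered coordinate $\hat\eta_v(j)=\eta_v(j)-\bar\varphi_n(j)$ (recall $\bar\varphi_n(j)=\rho_n(n-j)$ and $m_n=\rho_n n$) the chain starts at $\hat\eta_v(0)=z$, and one wants the probability that it stays above the curved barrier $-\psi_{k,h}(j)$ on $[0,k]$ and lands in the unit interval $H_{i-h}$ at level $k$.

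Next I would establish \eqref{eq-oferlau1} by the standard two–step barrier decomposition: (i) a ballot/Brownian–bridge estimate giving that a centered $0$-Bessel (equivalently, a drifted Gaussian after the $\log$–change of variables turning $\sqrt{2T}$ into a sum) started at height $z_h=z+h$ stays positive up to time $k$ and reaches height $\approx i$, which produces the factor $2^{-k}$ (the tree's branching normalization), the entropic factor $\big(\tfrac{z_h}{\sqrt{k_n}}\wedge k\big)$, and the Gaussian cost $e^{-z_h^2/(4m)}$ for the free choice of $m\in[2k,n^2]$; and (ii) the Girsanov/tilting factor $e^{-c_\ast z_h}\,g_m(i+1)$ coming from the linear drift $c_\ast$ in the barrier $\bar\varphi_n$, together with the correction $e^{\beta_{n,k}}$ accounting for the $\tfrac{\log n}{c_\ast n}$ term in $\rho_n$ over $k$ steps (this is precisely $\beta_{n,k}=\tfrac kn\log n-\log k$). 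Here one must be careful that the Bessel/random–walk discrepancy, the non-constant variances $\Var(T_j)=2j$, and the curvature of $\psi_{k,h}$ only contribute to the absolute constant $c_1$; the relaxation term $j_k^\delta$ in $\psi_{k,h}$ is small enough (being sublinear) that the ballot estimate is unchanged up to constants, which is why $\delta<1/2$ suffices at this stage. For $k=0$ the chain has not moved, so the only content is $\hat\eta_v(0)=z\in H_{i-h}$, whence the stated replacement of the entropic factor by $1$.

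For \eqref{eq-amirlau1} the argument is the same but without the lower endpoint constraint at $j=0$: conditioning on $\hat\eta_v(k')=z$, I would run the chain on the levels $j\in(k',n']$, so the relevant horizon is $n-k'$, the positivity constraint is only the straight barrier $\hat\eta_v(j)\ge0$, and the entropic factor becomes $1/\sqrt{k_{n-k'}}$ with the same tilt $e^{-c_\ast z}g_m(i+1)$ and correction $e^{\beta_{n,k}}$; the case $z\in[-\rho_n k,0]$ is handled by allowing $m=-4k$, i.e. using the other sign in the Gaussian factor, which is legitimate because the Bessel comparison still gives a matching bound when the starting point is (mildly) below the barrier. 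I expect the main obstacle to be item (ii): getting the constant $c_\ast$ in the exponent and the precise polynomial $g_m(i+1)=(i+1)\exp(c_\ast(i+1)+(i+1)^2/m)$ correct requires carefully tracking the exponential change of measure together with the $\eta=\sqrt{2T}$ nonlinearity and the discretization error of the $0$-Bessel process, uniformly over the wide range of $m$; once the right tilted measure is identified, the surviving probability is a routine ballot estimate. The bookkeeping of how the sublinear corrections $j_k^\delta$ and the $\beta_{n,k}$ drift interact with the barrier over the full range $k\ge0$, $h\in[0,n-k)$ is the other place where care is needed, but it is a matter of absorbing lower-order terms into $c_1$ rather than a genuine difficulty.
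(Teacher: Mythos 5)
Your overall orientation (project onto the geodesic, view $\eta_v(j)=\sqrt{2T^s_{v,j}}$ as a near $0$-Bessel chain, prove a ballot-type barrier estimate, then extract the exponential factors) is in the same family as the paper's argument, but as a proof plan it has a genuine gap exactly where all the work lies. The paper does not re-derive the barrier estimate by a Girsanov/tilting computation for a Bessel approximation: it imports, wholesale, ready-made estimates for this non-Gaussian excursion-count chain from \cite{BRZ} — namely \cite[(1.1)]{BRZ}, the proof of \cite[(4.2)]{BRZ} modified via \cite[Remark 2.6]{BRZ} and \cite[(4.16)]{BRZ}, together with \cite[Lemma 3.6]{BRZ17} — which give the intermediate bound \eqref{eq-oferlau}, i.e. $q_{n,k,z}(i;h)\le c\,\tfrac{(1+z_h)(1+i)}{k}\sqrt{x/(ky)}\,\sup_{w\in H_y}e^{-(x-w)^2/(2k)}$, uniformly over the stated ranges (in particular with the curved relaxation $j_k^\delta$ and with $x/L$, $y/L$ unbounded). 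In your plan this uniform ballot estimate for the actual discrete chain is precisely the step you assert away ("the Bessel/random-walk discrepancy \dots only contribute to the absolute constant $c_1$"); establishing it is the content of \cite{BRZ}, not a routine consequence of a Brownian ballot bound plus a change of measure, and your sketch gives no indication of how the discretization, the square-root nonlinearity and the curvature are controlled uniformly in $k,h,z,i$.

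Second, your bookkeeping of where the factors come from is off. There is no branching anywhere in this lemma — it concerns a single geodesic — so the $2^{-k}$ is not "the tree's branching normalization"; it, together with $e^{\beta_{n,k}}$, $e^{-c_\ast z_h}$ and the $m$-split $e^{-z_h^2/(4m)}g_m(i+1)$, all come from a single algebraic expansion of the Gaussian bridge cost $e^{-(x-b-\wt w)^2/(2k)}$ along the line of slope $-\rho_n$, using $x-b=c_\ast k+z_h-\vep_{n,k}$, $c_\ast^2/2=\log 2$, and an elementary inequality with the free parameter $m$ (this is \eqref{eq:jay-ident1}--\eqref{eq:bd-el-xw}); splitting them between a "ballot step" and a separate "Girsanov step" is not how the bound decomposes. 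Finally, the extension of \eqref{eq-amirlau1} to $z\in[-\rho_n k,0]$ is not merely "the Bessel comparison still gives a matching bound": the paper's device is to lower the whole barrier line so that it passes through the starting point (legitimate precisely because $z\ge-\rho_n k$ keeps the line positive) and then to retain the positive $\wt w^2$ term in the expansion, which is what produces $m=-4k$, i.e. the Gaussian decay in $i$ at scale $\sqrt{k}$ inside $g_{-4k}(i+1)$. As written, your plan restates the conclusion at these points rather than proving it.
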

\begin{proof} In case $k \ge 1$, setting $a = \rho_n n - h
$ and $b = \rho_n (n-k) - h
$,  the event considered in \eqref{eq-oferlau1-def} corresponds to  \cite[(1.1)]{BRZ}
for $L=k$, $C=1$, $\vep=\frac{1}{2}-\delta$ and the line $f_{a,b}(j;k)$ between $(0,a)$ and $(k,b)$,
taking there 
$y=b+i$ and $x= m_n+z \ge a$.  Having $z \ge 1$ and $m_n \ge 1$
yields that $x \ge 2$. Further, with $h \in [0,n-k)$ and  $\rho_n \ge \rho_\ast$,  
\begin{equation}\label{eq:y-lbd}
y \ge b  = \rho_n  (n-k) - h  \ge  1 + \frac{1}{10} (n-k)  
\end{equation}
(the restriction to $y \ge \sqrt{2}$ in \cite[(1.1)]{BRZ} clearly can be replaced with $y> 1$ there, since for $y\in [1,\sqrt{2})$ 
one has $H_y^2/2 \cap \bZ\subset (H_{\sqrt{2}})^2/2 \cap \bZ$). 
Here $x/L$ and $y/L$ are not uniformly bounded above but
following \cite[proof of (4.2)]{BRZ}
and utilizing \cite[Remark 2.6]{BRZ} to suitably modify \cite[(4.16)]{BRZ},
we nevertheless arrive at the bound 
\begin{align}\label{eq-oferlau}
q_{n,k,z}(i;h) \le c \, 
\frac{(1+z_h)(1+i)}{k}
\sqrt{\frac{x}{ky}} \sup_{w \in H_y} \{ e^{-(x-w)^2/(2k)} \} \,.
\end{align}
In addition, whenever $x \ge \sqrt{2}$, $y,k \ge 1$, we have that 
\begin{align}\label{eq-oferlau-s}
q_{n,k,z}(i;h) \le \bP_{s_{n,z}} ( \hat \eta_v(k) \in H_{i-h}) \le c \sup_{w \in H_y} \{ e^{-(x-w)^2/(2k)} \} 
\end{align}
(see \cite[Lemma 3.6]{BRZ17}).
Next, since $x \le c_\ast n + z$,  we deduce from \eqref{eq:y-lbd} that 
\begin{equation}\label{eq:bd-xky}
\frac{x}{ky} \le \frac{20}{k_n} \Big(c_\ast + \frac{z}{n} \Big) \le \frac{c_o}{k_n} \exp\big(\frac{z^2}{12 n^2}\big) \,,
\end{equation}
for some 
constant $c_o<\infty$.
Further, as $x-a=z_h$ we have from \eqref{28.00} that 
\begin{equation}
\label{eq-latenight}
x - b = c_\ast k + z_h  - \vep_{n,k} \,, \qquad  \vep_{n,k} := \frac{k \log n}{c_\ast n} \,,
\end{equation}
and since $\frac{c^2_\ast}{2} = \log 2$, we get,  for any real  $\wt w$,
\begin{equation}\label{eq:jay-ident1}
\frac{1}{2k} (x-b-\wt w)^2 = k \log 2 + c_\ast (z_h-\vep_{n,k}-\wt w) + \frac{1}{2k} (z_h-\vep_{n,k}-\wt w)^2 \,.
\end{equation}
By an elementary inequality, for any $m \ge 2k$,
\begin{equation}\label{eq:bd-el-xw}
(z_h - \wt w-\vep_{n,k})^2 \ge \frac{2k}{3m} (z_h^2 - 3 \wt w^2 - 3 \vep_{n,k}^2) \,, 
\end{equation}
so that by \eqref{eq:jay-ident1}
\begin{equation}\label{eq:bd-xbw2}
\frac{1}{2k} (x-b-\wt w)^2  \ge 
 k \log 2 + c_\ast \big(z_h - \vep_{n,k}  - \wt w \big) 
+ \frac{z_h^2}{3m} - \frac{\wt w^2}{m} - \frac{\vep_{n,k}^2}{m}  \,.
\end{equation}
With $c_\ast \vep_{n,k} - \log k = \beta_{n,k} \le 1$ for $k \in [1,n]$ 
(and $\vep_{n,k}^2/k$ uniformly bounded), 
we plug into the smaller among 
\eqref{eq-oferlau} and \eqref{eq-oferlau-s}
the bounds  \eqref{eq:bd-xky} and \eqref{eq:bd-xbw2} 
(for $w=b+\wt w$ and $\wt w \in H_i$),
to arrive at \eqref{eq-oferlau1}.
By definition $q_{n,0,z}(i;h) = {\bf 1}_{z_h}(i)$,  and since
$g_m(z_h+1) \ge \exp(c_\ast z_h + z_h^2/(4m))$,
clearly \eqref{eq-oferlau1} holds also for $k=0$
(under our convention).

Turning to the proof of  \eqref{eq-amirlau1}, we consider first $z > 0$, proceeding as in the proof 
of \eqref{eq-oferlau1} with the line $f_{a,b}(j;k)$ of length 
$k$ and same slope as in the preceding, now connecting $(k',a)$ to $(n',b)$,
where $a =
\rho_n (n-k')$ and $b = 
\rho_n (n-n')$. For $x=a+z$ and $y=b+i$
we have $x \ge a > b$ and $y \ge b>0$, thanks to our assumption that $n' \in (k',n)$.
By the Markov property of $j \mapsto \eta_v(j)$, for such values of $(a,b,x,y)$ 
the \abbr{rhs} of \eqref{eq-oferlau} with $h=0$ necessarily bounds the probability $p_{n,k,z}(i)$,
and thereafter one merely follows  the derivation of \eqref{eq-oferlau1}, now with  
$h=0$ and $n-k'$ replacing $n$ when bounding $x/y$. The latter modification 
results in having $\sqrt{k_{n-k'}}$ in \eqref{eq-amirlau1}, instead of $\sqrt{k_n}$. 
Next, for $z \le 0$ we simply lower the barrier line $f_{a,b}(j;k)$ to start at
$a=x=\bar \varphi_n(k') + z$, where our assumption that $z \ge -\rho_n k$ guarantees
having $x \ge \bar \varphi_n(n') >0$ (thereby yielding \eqref{eq-oferlau}). Here
$x/(ky) \le c_o/k_{n-k'}$, and $z_h =z \le 0$ allows us to replace the
\abbr{rhs} of \eqref{eq:bd-el-xw} by $\frac{\wt w^2}{2} -  \vep_{n,k}^2$, 
yielding the stated form of \eqref{eq-amirlau1}.
\end{proof}

We conclude this sub-section by adapting the bounds of \Cref{lem:barrier-est} to the form 
needed when proving \Cref{lem-second-moment} and \Cref{lem-Gamma-Lambda}. 
\begin{lemma}\label{lem-apriori-c}
There exists a constant $c_2<\infty$ satisfying the following.
Fix $\hat \ell$ as in   \Cref{lem2.7} and $I_\ell$ as in  \eqref{dfn:I_ell}. 
For  any $z \ge 0$,  $\ell \in [\hat \ell, \frac{n}{\log n}]$ and  $k\geq 1$, setting $k'=n'-k$ with $n'$ as in \eqref{eq:bar-st},
\begin{align}\label{eq:bd-theta}
\theta_{n,k,\ell} (z) &:=   
\bP
(\min_{j \in (k',n']} \{ \hat \eta_{v}(j) \}  \ge 0, \, \hat \eta_v (n')  \in  I_\ell,\
\eta^\sharp_\ell(v) = 0 \,|\, \hat \eta_v(k') = z)  \nn \\
& \le c_2 \,
2^{-k} e^{\beta_{n,k}} (1 \vee \sqrt{\ell/k})  (1+z) e^{- c_\ast z} e^{-z^2/(8(k \vee 8\ell))} 
\end{align}

If in addition  $z \ge 4 h_\ell$, $n \ge 3 \ell$, then for any $r \ge 0$, 
\begin{align}\label{eq:bd-phi}
{\bf p}_{n,k,z}(r) &:=  \bP_{s_{n,z}}(\min_{j \le n'} 
\{\hat \eta_v (j) + \psi_{\ell}(j) {\bf 1}_{j \le k}  \} > 0 \,, \; \hat \eta_{v} (k)  \le 0, \, \; \hat \eta_{v} (n')  \in  H_r ) 
 \nn \\
&\le c_2 \, 2^{-n'} 
\frac{\psi_\ell(k)^3}{k_{n'}^{3/2} \ell^{1/2}} 
e^{-z^2/(16k)} \,  z  (r+1) e^{-c_\ast (z- r)} e^{-\frac{r^2}{4(n'-k)}}\,.
\end{align}
\end{lemma}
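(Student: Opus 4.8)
The plan is to derive both bounds from \Cref{lem:barrier-est} by decomposing the path $j \mapsto \eta_v(j)$ at the intermediate level $k'$ (resp.\ $k$) and using the strong Markov property of $j \mapsto \eta_v(j)$, together with \Cref{lem2.7} to handle the no-covering event $\{\eta^\sharp_\ell(v)=0\}$. For \eqref{eq:bd-theta}, I would condition on $\hat\eta_v(n')$; given $\hat\eta_v(k')=z$, the event on the first $n'-k'=k$ steps is exactly the one controlled by $p_{n,k,z}(i)$ in \eqref{eq-amirlau1} (with $h=0$, the line of length $k$ from $(k',a)$ to $(n',b)$), while the event $\{\eta^\sharp_\ell(v)=0\}$ on the remaining $\ell$ levels is, after re-centering, governed by $\ga_{n,\ell}(r)$ of \eqref{35.1} applied at excess value $r=\hat\eta_v(n')$. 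Since $\hat\eta_v(n')\in I_\ell = \sqrt\ell\,[r_\ell^{-1},r_\ell]$ with $r_\ell=\sqrt{\log\ell}$, one has $r\in I_\ell$, so summing $p_{n,k,z}(i)\,\ga_{n,\ell}(i)$ over the integers $i$ in $I_\ell$ and optimizing the free parameter $m\in[2k,(n-k')^2]$ (taking $m\asymp k\vee 8\ell$) collapses the Gaussian factors; the polynomial factors $g_m(i+1)$ from \eqref{eq-amirlau1} times the $\ell^{-1}(i+\log\ell)e^{-i^2/(8\ell)}$ factor from \eqref{35.1}, summed over $i\asymp\sqrt\ell$, produce the stated $(1\vee\sqrt{\ell/k})$ and the loss of the $e^{-c_\ast r}$ cancellation, leaving $e^{-c_\ast z}$. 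The case $z\in[-\rho_n k,0]$ is covered by the last sentence of \Cref{lem:barrier-est}.

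For \eqref{eq:bd-phi}, the path is split at level $k$: on $[0,k]$ one has the curved relaxed barrier $\hat\eta_v(j)+\psi_\ell(j)>0$ ending with $\hat\eta_v(k)\le 0$, which is precisely $q_{n,k,z}(i;h_\ell)$ of \eqref{eq-oferlau1-def}--\eqref{eq-oferlau1} evaluated at the endpoint $\hat\eta_v(k)=i-h_\ell\le 0$, i.e.\ summed over $i\le h_\ell$ (here $\psi_{n',h_\ell}$ restricted to $[0,k]$ is dominated by $\psi_{k,h_\ell}$ up to the $j_k^\delta$ term, which I would absorb by slightly enlarging the barrier and noting $k\le n'$); on $[k,n']$ one has the straight-barrier event $\min_{k<j\le n'}\hat\eta_v(j)\ge 0$ ending in $H_r$, governed by $p_{n,n'-k,w}(r)$ with $w=\hat\eta_v(k)\le 0$, for which one again invokes the $z\le 0$ clause of \Cref{lem:barrier-est} (with $m=-4(n'-k)$ there, giving the Gaussian $e^{-w^2/2}$). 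Multiplying the two estimates, summing $w=i-h_\ell$ over $i\le h_\ell$, and choosing $m\asymp k$ in \eqref{eq-oferlau1} yields: the leading $2^{-k}e^{\beta_{n,k}}$ from the first leg times $2^{-(n'-k)}e^{\beta_{n-k',n'-k}}$ from the second multiply (using $n'=k+(n'-k)$ and the additivity $\beta_{n,k}+\beta_{n-k',n'-k}\asymp\beta$ up to constants, since $k'=n'-k$) to $2^{-n'}$ times a bounded factor; the factor $\big(\tfrac{z_h}{\sqrt{k_n}}\wedge k\big)$ from \eqref{eq-oferlau1} combined with the $k^{3/2}$-type denominators and the endpoint sum over $i\le h_\ell$ produces $\psi_\ell(k)^3/(k_{n'}^{3/2}\ell^{1/2})$ after using $z\ge 4h_\ell$ to control $\psi_\ell(k)=h_\ell+k_{n'}^\delta$; the Gaussian pieces combine (via $z_{h}=z+h_\ell\ge z$ and an elementary inequality splitting $z^2/(4m)$) into $e^{-z^2/(16k)}e^{-r^2/(4(n'-k))}$, while the exponential cost $e^{-c_\ast z_h}$ from the first leg and $e^{c_\ast |w|}=e^{c_\ast(h_\ell - i)}$ summed with $e^{-c_\ast r}$ from the second leg give the net $e^{-c_\ast(z-r)}$.

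The main obstacle I anticipate is the bookkeeping of polynomial factors and barrier corrections: one must check that replacing $\psi_\ell=\psi_{n',h_\ell}$ on $[0,k]$ by $\psi_{k,h_\ell}$ (so as to apply \eqref{eq-oferlau1} verbatim) is harmless, that the two additivity identities for $\beta_{n,k}$ across the split at $k$ hold up to a universal constant absorbed into $c_2$, and — most delicately — that summing the product of the two $g_m(i+1)$-type polynomial weights against the endpoint constraints ($i\le h_\ell$ in the second case, $i\in I_\ell$ in the first) reproduces exactly the powers of $k_{n'}$, $\ell$ and $\psi_\ell(k)$ claimed, rather than something slightly larger. The requirement $z\ge 4h_\ell$ and $n\ge 3\ell$ are exactly what make the Gaussian exponents dominate the stray polynomial growth in $i\asymp\sqrt\ell$, so I would keep careful track of where each is used. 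Everything else is a routine, if tedious, application of \Cref{lem:barrier-est} and \Cref{lem2.7}.
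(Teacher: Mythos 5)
Your plan follows essentially the same route as the paper's proof: for \eqref{eq:bd-theta} it decomposes at level $n'$ and sums $p_{n,k,z}(r)\,\ga_{n,\ell}(r)$ over $r\in I_\ell$ with $m\asymp k\vee 8\ell$, and for \eqref{eq:bd-phi} it splits at level $k$, pairing $q_{n,k,z}(\cdot;\cdot)$ at $m\asymp k$ with the $z\le 0$ clause of \eqref{eq-amirlau1} for the second leg and using the additivity of $\beta_{n,\cdot}$ across the split, exactly as in the paper. The one caveat is the barrier comparison on $[0,k]$: as you state it the inequality goes the wrong way (in fact $\psi_{k,h_\ell}\le\psi_{n',h_\ell}$ there), so one must enlarge the shift to $h=\psi_\ell(k)$ as in \eqref{eq:psi-dom} --- which is precisely the ``slightly enlarged barrier'' you propose, and is what makes the endpoint sum run up to $\psi_\ell(k)$ rather than $h_\ell$ and supplies one of the three factors of $\psi_\ell(k)$ in \eqref{eq:bd-phi}.
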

\begin{proof} Starting with \eqref{eq:bd-theta}, by the Markov property of $\eta_v(j)$ at $j=n'$
and monotonicity of $y \mapsto \ga_{n,\ell}(y)$ of \eqref{35.1}, we have 
in terms of $p_{n,k,z}(\cdot)$ of \eqref{eq-amirlau1}
\[
\theta_{n,k,\ell}(z) \le  \sum_{r \in I_\ell} p_{n,k,z}(r) \ga_{n,\ell}(r) \,.
\]
Plugging the bounds of \eqref{35.1} and \eqref{eq-amirlau1} (at $m=2 (k  \vee 8 \ell)$), 
yields for $c'_1$ finite
\[
\theta_{n,k,\ell}(z) \le c'_1
 \frac{2^{-k} e^{\beta_{n,k}}\sqrt{\ell}}{\sqrt{k_{n-k'}}} (1+z) e^{- c_\ast z} 
e^{-z^2/(8(k \vee 8 \ell))} \, \sum_{r \in I_\ell} \frac{r^2}{\ell^{3/2}}   e^{-r^{2}/(16 \ell)} \,.
\]
With the latter sum uniformly bounded and $k_{n-k'}=k \wedge \ell$, we arrive at \eqref{eq:bd-theta}.

Next, turning to establish \eqref{eq:bd-phi}, note first that  
\begin{equation}\label{eq:psi-pre-dom}
j_{k'}^\bb\leq k_{k'}^\bb+j_k^\bb \,, \qquad\qquad \forall \; 0 \le j \le k < k'
\end{equation}
when $\bb=1$ and consequently also for all $\bb \in [0,1]$.  For $\bb=\delta$, it results with
\[
\psi_{k',h}(j) \le \psi_{k,h'}(j) \,, \quad \mbox{ for } \quad h'=\psi_{k',h}(k), \quad j \in [0,k] ,
\]
with equality at $j=k$. In particular, recalling  \eqref{dfn:psi-ell} and considering 
$k'=n'$, we have  that 
\begin{equation}\label{eq:psi-dom}
\psi_\ell(j) \le \psi_{k,h}(j) \quad \mbox{ for } \quad h = \psi_\ell(k), \;\;  j \in [0,k] \,.
\end{equation}
 Employing \eqref{eq:psi-dom} to enlarge 
the event whose probability is ${\bf p}_{n,k,z}(i)$, we get  by the Markov property of
$\eta_v(j)$ at $j=k$, in terms of $q_{n,k,z}(\cdot;\cdot)$, 
$p_{n,k,z}(\cdot)$
and $h=\psi_\ell(k)$, that 
\begin{align}\label{eq:varphi-def}
{\bf p}_{n,k,z}(r) \le &  \sum_{i=1}^{h} q_{n,k,z} (h-i;h) 
\sup_{z' \in H_{-i}} \{ p_{n,k',z'}(r) \} 
\,.
\end{align}
Substituting first our bound \eqref{eq-amirlau1} at $m=-4k'$, and then 
\eqref{eq-oferlau1} at $m=2k$, yields that for 
some $c'_1
$ finite,
\begin{align}\label{eq:phi-bd2}
& {\bf p}_{n,k,z}(r) \le  c_1 \sum_{i=1}^{h} q_{n,k,z} (h-i;h) 
2^{-k'} e^{\beta_{n,k'}} (k'_{k'+\ell})^{-1/2}  e^{c_\ast  (r+i)} (r+1) e^{-\frac{r^2}{4k'}} \nn \\
&\le  c'_1 \, h \, 2^{-n'}
\frac{e^{\beta_{n,k} + \beta_{n,k'}}} {\sqrt{k'_{k'+\ell} k_n}} z_h e^{-c_\ast z_h} 
 e^{-z^2/(8k)}  g_{2k}(h) (r+1) e^{c_\ast r}  e^{-r^2/(4k')}\,.
\end{align}
With $\delta \le \frac{1}{2}$ and $z \ge 4 h_\ell$, it follows that 
\[
\frac{h^2}{2k} \le \frac{h_\ell^2 + k_{n'}^{2\delta}}{k} 
\le \frac{z^2}{16k} + 1 \,.
\]
Further, recall that $k+k'=n'$ and $\beta_{n,n'} \le 1$, hence 
\begin{equation}\label{eq:sum-beta}
e^{\beta_{n,k}+\beta_{n,k'}} = e^{\beta_{n,n'}} \frac{n'}{k k'} \le \frac{2 e}{k_{n'}} \,.
\end{equation}
Our assumption $n \ge 3 \ell$ results with $k \vee k' \ge \ell$ and thereby 
$k'_{k'+\ell} \, k_n \ge \ell \, k_{n'}$. Applying the preceding 
within \eqref{eq:phi-bd2}, we arrive at \eqref{eq:bd-phi} .
\end{proof}

\subsection{Negligible crossings: Proof of \Cref{lem-a-priori2}}
Fixing $n,k',h$ as in \Cref{lem-a-priori2}, consider for $u \in V_{k'}$,  the 
first time 
\[
\tau_u := \min\{ j \ge 0 : \eta_u (j) \le \varphi_{n,k',h} (j) \} 
\]
that the process $j \mapsto \eta_u (j)$ reaches 
the relevant barrier of \eqref{eq:barrier}, see Figure \ref{fig:Gnl}. For $z \ge 1$ we have that $\tau_u \ge 1$, since $\eta_{u}(0)=\sqrt{2s_{n,z}} = m_n + z > \varphi_{n,k',h}(0)$
under $\bP_{s_{n,z}}$.
Decomposing $G_{n,k'}(h)$ according to the possible values of $\{\tau_u\}$, results with
		\begin{align}
	\bP_{s_{n,z}}(G_{n,k'}(h))		
		&\le \sum_{k=1}^{k'}
		\underset{({\rm I}_k)}{\underbrace{
			\bP_{s_{n,z}}
				\Big(\exists u\in V_{k'} \; \text{ such that } \;\tau_{u} = k \Big) 
				}}				\,.
				\label{eq: upper bound split}
		\end{align}
The event $\{\tau_u=k\}$ depends only on the value of $u(k) \in V_k$. Hence, by the union bound 
we have that for any fixed $u \in V_{k'}$
\begin{equation}
({\rm I}_k) \le	2^{k}\, \bP_{s_{n,z}} (\tau_u=k) . \label{eq: sum over levels2}
\end{equation}
With $\rho_n > 1 > \delta$, 
it is easy to verify 
that $j \mapsto \varphi_{n,k',h}(j)$ is strictly 
decreasing with $\varphi_{n,k',h}(k')=\rho_n (n-k') - h \ge 0$.
Further, 
for $b:=\varphi_{n,k',h}(k)$, $\wt{b}:=\varphi_{n,k',h}(k+1)$,  we get 
upon conditioning on $\eta_u(k)=y$, that   
	\begin{align}
&	\bP_{s_{n,z}} (\tau_u=k+1) \le \nn \\ 
& \qquad \sum_{i=0}^\infty 
	\bP_{s_{n,z}} ( \tau_u > k, 
	\eta_{u} (k) \in H_{b+i} )
	\sup_{y \in H_{b+i}} \bP_{\frac{y^2}{2}} (\eta_u (1) 
	\le \wt{b}) \,.
\label{eq: cond on height2}
	\end{align}
	Applying \cite[Lemma 4.6]{BK} at $p=q=1/2$ and 
	$\theta=\wt{b}^2/2 \le y^2/2$
		\begin{equation}
		\label{eq-I5.6}
		\sup_{y \in H_{b+i}} \bP_{\frac{y^2}{2}} (\eta_u (1)
		\le \wt{b} ) \leq 
		e^{-i^2/4}.
		\end{equation} 
Setting  $h'=\psi_{k',h}(k)$, we proceed to bound the first probability on the \abbr{rhs} 
of \eqref{eq: cond on height2}. To this end, recall  \eqref{eq:psi-pre-dom}, yielding that  
$\varphi_{n,k,h'}(j)  \le \varphi_{n,k',h}(j)$ for $j \in [0,k]$, with equality at $j=k$
(see \eqref{eq:barrier}--\eqref{psi-def}). Consequently, 
\begin{equation}
\bP_{s_{n,z}} (\tau_{u} > k, \eta_{u} (k) \in H_{b+i}) \leq  q_{n,k,z} (i;h'),
\end{equation}
for $q_{n,k,z}(\cdot;\cdot)$ of \Cref{lem:barrier-est}.  Since $n-k' \ge h$ and $\delta<1$, for any
$k < k'$,
\[
n-k - h' 
\ge k'-k - (k'-k)^\delta > 0 \,,
\] 
in which case, by \eqref{eq-oferlau1} we have that for any $i \in \bZ_+$
\begin{equation}\label{eq:oferlau2}
q_{n,k,z}(i;h') 
\le c_1 2^{-k} z_{h'} e^{- c_\ast z_{h'}} 
e^{ -z_{h'}^2/(8n)} \, g_{2n} (i+1) \,.
\end{equation}
Noting that  
$\sup_{n \ge 3} \{ g_{2n}(i+1) \} e^{-i^2/4}$ is summable (and $z_{h'} \ge z_h$),
we find upon combining \eqref{eq: sum over levels2}--\eqref{eq:oferlau2}, that for 
some $c_3$ finite and any $1 \le k < k'$,
\begin{align}\label{eq:final-ofla}
({\rm I}_{k+1}) &\le	2^{k+1} \sum_{i=0}^\infty q_{n,k,z}(i;h') e^{-i^2/4} \nn \\
&\le
c_3 (z_h+k_{k'}^\delta) e^{- c_\ast (z_h + k_{k'}^{\delta})} 
e^{-z_h^2/(8n)}  \,.
\end{align}
Further, with $\varphi_{n,k',h}(1) \le m_n-h$ we have similarly to 
\eqref{eq: sum over levels2}--\eqref{eq-I5.6} that  
\[
({\rm I}_{1}) \le	2 \bP_{s_{n,z}} (\tau_u=1) \le
	2 \bP_{\frac{x^2}{2}} (\eta_{u} (1) \le m_n-h) \le 2 e^{-z_h^2/4} \,,
\]
which is further bounded for $z_h \ge z \ge 1$ by the \abbr{rhs} of
\eqref{eq:final-ofla} at $k=0$ (possibly increasing the 
universal constant $c3$). Summing over $k \le k'$ it follows from 
\eqref{eq: upper bound split} and \eqref{eq:final-ofla} that for some 
universal $c_4<\infty$,
\begin{align*}
\bP_{s_{n,z}} (G_{n,k'}(h)) &\le c_3 e^{-c_\ast z_h} e^{-z_h^2/(8n)}
\sum_{k=0}^{k'} (z_h+k_{k'}^{\delta}) e^{-c_\ast k_{k'}^{\delta}} \\
&\le c_4 z_h  e^{ -c_{\ast} z_h} e^{-z_h^2/(8n)} \,,
\end{align*}
as claimed in \eqref{gee.2}.

\subsection{Comparing barriers: Proof of \Cref{lem-Gamma-Lambda}}
\label{zerosubsection}

Hereafter, let $\nu_{n,k,z}(\cdot)$ denote the 
finite measure on $[0,\infty)$ such that 
\begin{equation}\label{dfn:nu-z}
\nu_{n,k,z}(A) := 2^k \bP_{s_{n,z}}(\min_{j \le k} \{\hat \eta_v (j)\} > 0  \,, \; \hat \eta_{v} (k)  \in  A) \,,
\end{equation}
using the abbreviation 
$\nu_{n,z}=\nu_{n,n',z}$ (where $n'=n-\ell$). In view
of \eqref{eq-big-definition-sharp} 
and the Markov property of $\{\eta_v(j)\}$ at $j=n'$ we  
have that 
\begin{align}
\bE_{s_{n,z}}[\Lambda_{n,\ell}] = 
2^{n'} \bP_{s_{n,z}}(E_{n,\ell}(v)) 
 =  \int_{I_\ell} \ga_{n,\ell} ( y) \nu_{n,z}(dy) \,.
\label{equpperbd}
\end{align}
for $\ga_{n,\ell}(\cdot)$ of \eqref{35.1}. Similarly, setting the finite measure 
on $[-h_\ell,\infty)$
\begin{equation}\label{dfn:mu-z}
\mu_{n,z}(A) :=  2^{n'} \bP_{s_{n,z}}(\min_{j \le n'} 
\{\hat \eta_v (j) + \psi_{\ell}(j) \} > 0 \,, \; \hat \eta_{v} (n')  \in  A) \,,
\end{equation}
we have by the Markov property and \eqref{eq-big-definition} that 
\begin{align}
\bE_{s_{n,z}}[\Gamma_{n,\ell}] = 2^{n'} \bP_{s_{n,z}}(F_{n,\ell}(v)) 
 =  \int_{-h_\ell}^\infty \ga_{n,\ell} ( y) \mu_{n,z}(dy) \,.
\label{Markov-Gamma}
\end{align}
For $r \ge 0$,  recalling $H_r=[r,r+1]$, we decompose $\mu_{n,z}(H_r)-\nu_{n,z}(H_r)$ according to the possible 
values of 
$\tau := \max \{ j < n' :  \hat \eta_v(j) \le 0 \}$, 
to arrive at 
\begin{align}\label{eq:def-mu-nu}
\mu_{n,z}(H_r) - \nu_{n,z}(H_r) \le  2^{n'} \sum_{k=1}^{n'-1} {\bf p}_{n,k,z}(r) \,,  
\end{align}
for ${\bf p}_{n,k,z}(r)$ of  \Cref{lem-apriori-c}.
By \eqref{equpperbd}, \eqref{Markov-Gamma}, 
\eqref{eq:def-mu-nu} and the 
monotonicity of $y \mapsto \ga_{n,\ell}(y)$ we have that 
\begin{align}\label{amir-decmp}
\bE_{s_{n,z}}[\Gamma_{n,\ell} - \Lambda_{n,\ell}]  
\le& \sum_{r \notin I_\ell}  \ga_{n,\ell}(r) \mu_{n,z}(H_r) 
+ \sum_{r \in I_\ell} \ga_{n,\ell}(r)  \sum_{k=1}^{n'-1} 2^{n'} {\bf p}_{n,k,z} (r)  \nn \\
&  := \; {\sf I}_n (z,\ell) \; + \; {\sf II}_n (z,\ell) \,.
\end{align}
Dealing first with ${\sf I}_n(z,\ell)$ of \eqref{amir-decmp}, note that  
$\mu_{n,z}(H_r)=2^{n'} q_{n,n'}(r+h;h)$
for $q_{n,k}(i;h)$ of \Cref{lem:barrier-est} and $h=h_\ell$.  Combining \eqref{35.1} with
\eqref{eq-oferlau1} at $k=n'$ (where $k_n=\ell$), and having  $z+h_\ell \le 2z$ (as $z \to \infty$
before $\ell \to \infty$), yields  
for some $c_5$ finite, any $\ell \ge \hat \ell$, large $n$ 
and all $r \ge -h_\ell$
\begin{equation}\label{mun-bd}
\ga_{n,\ell}(r) \mu_{n,z} (H_r)  \leq  c_5 \,  z  e^{-c_\ast z}   \frac{(r+2h_\ell)^2}{\ell^{3/2}} e^{-r^2/(8 \ell)} 
e^{(r+h_\ell)^2/n} \,.  
\end{equation}
Substituting \eqref{mun-bd} in \eqref{amir-decmp} and taking $n \to \infty$ results with 
\begin{equation}\label{eq:neglig-In}
\varlimsup_{z \to \infty} \big\{ z^{-1} e^{c_\ast z} 
 \varlimsup_{n \to \infty} {\sf I}_n(z,\ell) \big\} \le  \vep_{\sf I}(\ell)  \,,
\end{equation}
where by our choice \eqref{dfn:I_ell} of $I_\ell$,
for any $\ell \to \infty$ 
\[
\vep_{\sf I}(\ell) := c_5
 \sum_{r \not\in I_\ell} \frac{(r+2 h_\ell)^2}{\ell^{3/2}} e^{-r^2/(8 \ell)}   \longrightarrow 0 \,. 
\]
In view of \eqref{35.1}, it suffices to show that for some $\vep_{\sf II}(\ell) \to 0$ 
and all $r \in I_\ell$,
\begin{equation}\label{eq:neglig-IIn}
\varlimsup_{z \to \infty} \big\{ z^{-1} e^{c_\ast z} 
 \varlimsup_{n \to \infty} \sum_{k=1}^{n'-1} 2^{n'}  {\bf p}_{n,k,z} (r) \big\}
\le \frac{\vep_{\sf II}(\ell)}{\sqrt{\ell}} \,  (r+1) e^{c_\ast r}  \,,
\end{equation}
in order to get the analog of \eqref{eq:neglig-In} for ${\sf II}_n(z,\ell)$ and
thereby complete the proof of the lemma.  
In view of \eqref{eq:bd-phi} 
we get 
\eqref{eq:neglig-IIn} upon showing that
\begin{equation}\label{eq:neglig-IIn-final}
\lim_{z \to \infty} \sup_{r \in I_\ell} \,
\varlimsup_{n \to \infty} 
\sum_{k=1}^{n'-1}  \psi_\ell(k)^3  
k_{n'}^{-3/2}  \exp\big(-\frac{z^2}{16k}-\frac{r^2}{4(n'-k)}\big) = 0 \,.
\end{equation}
Even without the exponential factor, since $\delta<\frac{1}{6}$
the sum in 
\eqref{eq:neglig-IIn-final}
over $\{ k : k_{n'}^\delta  \ge h_\ell \}$, where $\psi_{\ell}(k) \le 2 k_{n'}^\delta$
is bounded above by 
\begin{equation}\label{delta-under-16}
4 \sum_{k \ge h_\ell^{1/\delta}} k^{3\delta - 3/2} \le c\,  
{h_\ell^{(3-\frac{1}{2 \delta})}}_{\stackrel{\longrightarrow}{\ell \to \infty}} 0 \,.
\end{equation}
Further, the sum in \eqref{eq:neglig-IIn-final} over 
$\{ k : k_{n'}^\delta < h_\ell\}$ has 
$2 h_\ell^{1/\delta}$ terms,  which are 
uniformly bounded by $(2 h_\ell)^3\exp\big(-b_\ell/(4 h_\ell^{1/\delta})\big)$, where having 
\begin{equation}\label{dfn:b-ell}
b_\ell := \frac{z^2}{4} \wedge  \inf_{r \in I_\ell} \{ r^2 \} = \frac{\ell}{\log \ell} \,,
\end{equation}
makes that sum also negligible,  
as claimed in \eqref{eq:neglig-IIn-final}.
\qed

\subsection{Second moment: proof of \Cref{lem-second-moment}}
\label{firstsubsection}

In view of \eqref{def:Lambda-n} we have that 
\[
\bE_s [\Lambda_{n,\ell} (\Lambda_{n,\ell}-1)]
= \sum_{\stackrel{ u,v \in V_{n'}}{u \ne v}} \bP_s (E_{n,\ell}(u)  \cap E_{n,\ell}(v)  ) \,.
\]
We recall  the definition \eqref{eq-big-definition-sharp} of $E_{n,\ell}(\cdot)$ and 
split the preceding sum according to the values of 
$k'=|u \wedge v|  < n'
$ and $\hat \eta_v(k') > 0$. Specifically, 
having $2^{n'+k-1}$ such ordered pairs (for $k=n'-k'$), 
yields the bound
\begin{align}
\label{newequation73}
\bE_{s_{n,z}} [\Lambda_{n,\ell} (\Lambda_{n,\ell}-1)] &\le  \sum_{k=1}^{n'} 
2^{2k} \int_0^\infty \theta^2_{n,k,\ell} (y) \nu_{n,k',z}(dy)  =: \sum_{k=1}^{n'} {\sf J_{k}}
\end{align}
in terms of $\theta_{n,k,\ell}(\cdot)$ and 
$\nu_{n,k',z}(\cdot)$ of \eqref{eq:bd-theta} and \eqref{dfn:nu-z}, respectively. Further, the 
Markov property of $\eta_v(j)$ at $j=k'$ yields in terms of $q_{n,k',z}(\cdot;0)$ of
\eqref{eq-oferlau1-def}
\[
{\sf J_k} \le \sum_{i=0}^\infty 2^{k'} q_{n,k',z}(i;0) \big[ 2^k \sup_{y \in H_i} \theta_{n,k,\ell}(y) \big]^2 \,.
\]
Plugging into the preceding the bounds \eqref{eq:bd-theta} and \eqref{eq-oferlau1} (at $m=64 n
 \le n^2$),
we find that for some $c_6<\infty$, any $k \in (0,n')$, $z \ge 1$ and $n \ge n_0(\ell)$ as in 
\Cref{lem-apriori-c},
\begin{equation}\label{eq:bd-Jk}
{\sf J_k} \le 
c_6
e^{\beta_{n,k'}+2\beta_{n,k}} (1 \vee \ell/k)
 \frac{z}{\sqrt{k'_n}
 \wedge k'}  e^{- c_\ast z}  \, \sum_{i=0}^\infty (i+1)^3 e^{-c_\ast i}
\end{equation}
Using \eqref{eq:sum-beta}, $\beta_{n,k} \le 1$ and $k'_n \ge k_{n'}$ we get from \eqref{eq:bd-Jk} that 
for some $c_7<\infty$, 
\begin{equation}\label{eq:bd-Jk-main}
{\sf J_k} \le \left\{
\begin{array}{l}
 c_7 \, z \, e^{-c_\ast z} \,
k_{n'}^{-3/2},  \qquad  \quad  k_{n'} \ge \ell \,, \\
c_7 \, e^{-c_\ast z}  \,, \qquad \qquad \qquad k' < \ell \,, \\
c_7\,  z \, e^{-c_\ast z}\, 
\sqrt{\ell} k^{-3}, \quad  \quad \; \;  k < \ell \,,
\end{array} \right.
\end{equation}
where 
for $k < \ell$ we used the alternative bounds  $\beta_{n,k} \le 1 - \log k$ and 
$k'_n \ge \ell$.  Now, \eqref{eq:bd-Jk-main} implies that for all $n$,
\[
z^{-1} e^{c_\ast z} \sum_{k= \ell^{1/3}}^{n'} {\sf J}_k \le  
c_7 \, [ \,   \sqrt{\ell} \sum_{k = \ell^{1/3}}^{\ell-1} k^{-3} + \sum_{k=\ell}^{n'-\ell} k_{n'}^{-3/2} 
+ \sum_{k=n'-\ell}^{n'} z^{-1}   \, ] \le \delta_{\ell,z}
\]
for some $\delta_{\ell,z}
\to 0$, when $z \to \infty$ followed by $\ell \to \infty$.
Turning to control the remaining sum of ${\sf J}_k$ over $k < \ell^{1/3}$, 
note that by \eqref{dfn:b-ell}, upon comparing \eqref{35.1} and \eqref{eq:bd-theta}
we find that for some $\vep(\ell)
\to 0$ as $\ell \to \infty$ and any $n \ge n_0(\ell)$,
\[ 
\sum_{k=1}^{\ell^{1/3}} 2^k \sup_{y \ge 0} \{ \theta_{n,k,\ell}(y) \}  
\le 4^{\ell^{1/3}} \sup_{r \in I_\ell} \{  \ga_{n,\ell}(r) \}  \le \vep(\ell) \,.
\]
To complete the proof of \Cref{lem-second-moment}, recall 
\eqref{eq-big-definition-sharp}  and  \eqref{dfn:nu-z}, that for $k'=n'-k$ 
\begin{equation}\label{theta-first-mom}
2^k \int_0^\infty \theta_{n,k,\ell} (y) \nu_{n,k',z}(dy) = \bE_{s_{n,z}} [\Lambda_{n,\ell}] \,.
\end{equation}
Hence, we have on the \abbr{rhs} of \eqref{newequation73} that 
\[
\varlimsup_{z \to \infty} \big\{ z^{-1} e^{c_\ast z} 
\varlimsup_{n \to \infty} \sum_{k=1}^{\ell^{1/3}} {\sf J}_k  \big\} \le  \vep(\ell)  
\varlimsup_{z \to \infty} \big\{ z^{-1} e^{c_\ast z}
\varlimsup_{n \to \infty} \bE_{s_{n,z}} [\Lambda_{n,\ell}] \big\} \,,
\]
which together with \eqref{eq:2mom-lbd} and  \eqref{eq-right-tail-ex} imply that 
the \abbr{rhs} 
is for all $\ell$ large enough
at most $2 c \vep(\ell)$ (i.e. negligible, as claimed). \qed

\section{The Bessel process: proof of \Cref{prop-asymptotic-first-moment}}\label{sec-sharpbar}

Hereafter, set
$\la_\ell (y) := \frac{1}{2} (c_\ast \ell +y)^2$, $y\geq -c^\ast \ell$, with
\begin{equation}
\begin{aligned}
  \wh \ga_{\ell} (y) & :=   
  \bP_{[\la_\ell(y)]} (\eta^{\sharp}_{\ell}=0, \; \eta(0) \in c_\ast \ell + I_\ell)  \,, \\
  \wt \ga_{\ell}(y) & := \bE^\xi \big\{  \bP_{\xi(\la_\ell(y))} (\eta^{\sharp}_{\ell}=0,
  \; \eta(0) \in c_\ast \ell + I_\ell)   )\big\}  \,,
  \end{aligned}
 \label{3a.49}
\end{equation}
which are 
$\ga_{\infty,\ell}(\cdot)$ from \eqref{35.1}, restricted to $I_\ell$ of \eqref{dfn:I_ell},
and its regularization by an expectation, denoted $\bE^\xi$, over the independent Poisson($\la$) 
variable $\xi(\la)$ at $\la=\la_\ell(y)$.  We emphasize that the law of $\xi$ depends on $\ell$ but we suppress this from the notation. We follow this convention of suppressing
dependence in $\ell,n$ in many places throughout this section, e.g. in the definitions \eqref{dfn:B-kappa}, \eqref{eq-Ak}, \eqref{dfn-D-psi-T}, \eqref{dfn:Fpm_T}
and \eqref{eq-evening2-T} below.
Our goal here is to prove \Cref{prop-asymptotic-first-moment}, with
\begin{equation}
 \alpha_\ell :=\frac{1}{\sqrt{\pi \ell}}  \int_0^\infty y e^{c_{\ast} y}    \, \wt \ga_{\ell}(y) \,d y\,. \label{3a.54m}
 \end{equation}
In particular $\al_\ell <\infty$, since by standard Poisson tail estimates
for some $c<\infty$, 
\begin{equation}\label{bd-Pois-for-ofer}
\wt \ga_\ell(y) \le \bP(\sqrt{2\xi(\la_\ell(y))}  - c_\ast \ell  \in I_\ell) \le \exp\big\{-c \, {\rm dist}(y,I_\ell)^2
\big\}
\end{equation}
(see \cite[(3.8)]{BRZ}). 
Omitting hereafter from the notation  the (irrelevant) specific choice $v \in V_{n'}$, 
we recall from \eqref{def:eta-hat}, \eqref{dfn:nu-z} and \eqref{equpperbd} that 
\begin{align}\label{3a.48} 
2^{-n'} \bE_{s_{n,z}}[\Lambda_{n,\ell}] 
& = \bE_{s_{n,z}}
\Big( \wh \ga_{\ell}\big(\eta(n') -c_{\ast}\ell \big); \min_{j < n'} \{\hat \eta (j) \} > 0 
\Big). 
\end{align}
The first step towards \Cref{prop-asymptotic-first-moment} is our next lemma, 
utilizing the Markov structure from \cite[Lemma 3.1]{BRZ} to estimate the 
barrier probabilities on the \abbr{rhs} of \eqref{3a.48} via the law
$\bP_x^Y$ of a $0$-dimensional Bessel process $\{Y_t\}$, starting at $Y_1=x$.
To this end,  define for  $\kappa \in \bR$ the events
\begin{equation}\label{dfn:B-kappa}
B_\kappa :=\bigcap_{j=1}^{n'}  \left\{ Y_j > \bar\varphi_n (j) + \kappa \psi_\ell(j)  \right\},
\end{equation}
in terms of the barrier notations \eqref{eq:bar-st}, \eqref{psi-def}, \eqref{dfn:psi-ell}, and
associate to each $[0,1]$-valued $g(\cdot)$, the function 
\begin{equation}\label{dfn:wt-g}
\wt g(w) :=  \bE^\xi \big[ g\big(\sqrt{2\xi(w^2/2)}\big) \big] \,,
\end{equation}
so in particular $g(w)=\wh \ga_\ell(w-c_\ast \ell)$ yields $\wt g(w)=\wt \gamma_\ell(w - c_\ast \ell)$. 
 \begin{lemma}\label{theo-sbarrierd} 
There exist $U_s \overset{dist}{\Longrightarrow}  U_\infty$, 
\corOF{a centered Gaussian of variance $1/2$}, and $\vep_\ell \to 0$ as $\ell \to \infty$,
such that  for $s=s_{n,z}$, any
$[0,1]$-valued $g(\cdot)$ supported on $[\bar \varphi_n(n'),\infty)$  and $z 
\ge \ell$,
\begin{align}\label{18.01d}
 (1- \vep_\ell)^{-1}  \bE^{Y,s} 
 \big(
 \; \wt g(Y_{n'}) ; B_{-1} \big)  
&  \ge  \bE_{s}  \Big( g(\eta(n')) ; \min_{j < n'} \{\wh \eta(j)\}  > 0   \Big) 
\\
&  \ge (1-\vep_\ell)
\bE^{Y,s}
 \big(  \wt g (Y_{n'}); B_2  \big)  
 ,  \label{18.02d}
\end{align}
where $\bE^{Y,s}$ denotes expectation with respect to a $0$-dimensional Bessel process starting at
$Y_1= U_s + \sqrt{2s}$.
Further,  for some $\delta>0$,
\begin{equation}\label{eq-U-sub-gaussian}
\sup_s \, \{\, \bE [ e^{\delta U^2_s} ]\, \} \, < \infty \,.
\end{equation}
\end{lemma}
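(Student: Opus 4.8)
The plan is to establish \Cref{theo-sbarrierd} in two stages: first a pathwise Markov-chain comparison that replaces the excursion-count process $\{j \mapsto \eta_v(j)\}$ and the terminal "non-covering" factor $\wh\ga_\ell$ by the $0$-dimensional Bessel process $\{Y_t\}$ and the regularized factor $\wt\ga_\ell$; second, a check that the resulting error factors and the starting-point fluctuation variable $U_s$ behave as claimed.

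\textbf{Step 1: The Markov-chain / Bessel comparison.} I would invoke the Markov structure of \cite[Lemma 3.1]{BRZ}, which represents the law of $j \mapsto 2T^s_{v,j}$ (projected onto the geodesic) in a way directly comparable, level by level, to a discrete skeleton of a squared $0$-dimensional Bessel process started from $2s$. Concretely, at each level the transition kernel of $\eta_v(j)^2/2$ is a mixture governed by a Poisson$(\cdot)$ thinning, which is exactly the $0$-Bessel transition structure; this is the origin of the regularization $\wt g$ in \eqref{dfn:wt-g} and of the Poisson-smoothed $\wt\ga_\ell$ in \eqref{3a.49}. The barrier event $\min_{j<n'}\{\wh\eta(j)\}>0$ translates into the Bessel barrier events $B_\kappa$ of \eqref{dfn:B-kappa}, and the slack $\kappa \in \{-1,2\}$ in the two bounds \eqref{18.01d}--\eqref{18.02d} absorbs the discretization discrepancy between the chain and the continuous process: the upper bound relaxes the barrier downward by one unit of $\psi_\ell$, the lower bound tightens it upward by two units. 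The multiplicative error $(1\pm\vep_\ell)$ comes from (a) the difference between the exact Poisson/geometric transition probabilities of the chain and the Bessel kernel, which is uniformly $1+O(1/\sqrt{\ell})$ once we are $\ell$ levels away from the leaves (this is where the restriction $z \ge \ell$ and the fact that $g$ is supported on $[\bar\varphi_n(n'),\infty)$ enter, keeping us in the regime where the Gaussian local CLT approximation is sharp), and (b) the a-priori barrier bounds of \Cref{lem:barrier-est} and \Cref{lem-apriori-c}, which control the contribution of paths that come close to the barrier and where the comparison is lossy.

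\textbf{Step 2: Identifying $U_s$ and its tail.} The Bessel process must be started not exactly at $\sqrt{2s}$ but at $Y_1 = U_s + \sqrt{2s}$, where $U_s = \wh\eta_v(0;s)$-type fluctuation — essentially $(\eta_v(0)^2/2 - s)$ rescaled, i.e. the first-step centered-and-normalized excursion count. By \Cref{lem-wish} and the CLT, $\sqrt{2T^s_{v,0}}-\sqrt{2s}$ converges in law to a centered Gaussian; tracking the variance through \eqref{ct2.6}--\eqref{ct2.8} at level $0$ gives variance $\tfrac12$ (the factor-of-two discrepancy between the BRW covariance and the excursion-count covariance, halved again by the $f_s$ normalization of \eqref{eq:f_s}), which yields $U_\infty \sim \dN(0,1/2)$. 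For the uniform sub-Gaussian bound \eqref{eq-U-sub-gaussian} I would use that $T^s_{v,0}$ is a sum of $s$ i.i.d. geometric-type variables with exponential moments, so $U_s$ has a uniformly bounded exponential-square moment by a standard Bernstein/Chernoff estimate — this is routine and I would not grind through it.

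\textbf{Main obstacle.} The genuinely delicate point is Step 1(a): making the transition-kernel comparison sharp enough that the aggregated error over all $n'$ levels is still only a multiplicative $1+\vep_\ell$ with $\vep_\ell\to 0$ \emph{uniformly in $n$ and in $z \ge \ell$}. A naive level-by-level comparison loses a factor $1+O(1/j)$ at level $j$, whose product over $j\le n'$ diverges; the resolution is that the comparison error is not multiplicative per level but is instead controlled by the barrier estimates — one shows the Bessel and chain path-measures restricted to the barrier event are mutually absolutely continuous with Radon–Nikodym derivative $1+O(\psi_\ell(\cdot)/\sqrt{\cdot})$ concentrated near the barrier, and then the a-priori bounds of \Cref{lem-apriori-c} show that near-barrier excursions contribute a vanishing (as $\ell\to\infty$) fraction. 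Getting the bookkeeping of this "error is supported where the a-priori bounds bite" argument exactly right, with the correct powers of $\psi_\ell$ and $k_{n'}$, is the heart of the proof and mirrors the analogous step in \cite[Section 3]{BRZ}, now refined to a $(1+o(1))$-type statement rather than an order-of-magnitude one.
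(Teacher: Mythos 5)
Your proposal correctly points to the right external input (the Markov structure of \cite[Lemma 3.1]{BRZ}, the Poissonization behind $\wt g$, a CLT-scale fluctuation $U_s$ with sub-Gaussian tails), but it misses the actual mechanism that produces the $(1\pm\vep_\ell)$ factors, and the substitute you offer is exactly the part you leave unexecuted. In the paper there is no transition-kernel approximation and no local CLT step at all: \cite[Lemma 3.1]{BRZ} supplies a single time-inhomogeneous chain $(\eta(0)=\sqrt{2s},Y_1,\eta(1),\dots,Y_{n'},\eta(n'),\dots)$ in which the $\eta$-marginal is \emph{exactly} $\bP_s$, the $Y$-marginal is \emph{exactly} $\bP^{Y,s}$, $\bQ[g(\eta(j))\mid Y_j]=\wt g(Y_j)$, and $Y_1=\sqrt{2\LL_1(s)}$ for a $\Gamma(s,1)$ variable $\LL_1(s)$ (so $U_s$ is a Gamma fluctuation; it is \emph{not} a normalized excursion count at level $0$ -- indeed $T^s_{v,0}=s$ deterministically, so your identification of $U_s$ is off, although your limit $\dN(0,1/2)$ and the Gamma large-deviation route to \eqref{eq-U-sub-gaussian} are the right conclusions). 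Given this exact interlacing, \eqref{18.01d}--\eqref{18.02d} follow from the product identities of \cite[Lemma 3.1(b,c)]{BRZ}: for instance $\bQ_1^s(g(\eta(n'));A_{n'}\cap B_{-1})=\bE_s\big(g(\eta(n'))\prod_{j\le n'}F^\eta_j;A_{n'}\big)$, where $F^\eta_j$ is the conditional probability that the interlaced $Y_j$ stays above the barrier lowered by $\psi_\ell(j)$ given $\eta(j-1),\eta(j)$; by \cite[(3.14),(3.16)]{BRZ} these conditional factors are at least $1-c\,e^{-c\,\psi_\ell(j)^2}$ whenever the two adjacent values lie above the unshifted (resp.\ raised, on $B_2$) barrier. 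Since $\psi_\ell(j)\ge h_\ell=\tfrac12\log\ell$, the product $\prod_j\big(1-c\,e^{-c\,\psi_\ell(j)^2}\big)\ge 1-\vep_\ell$ with $\vep_\ell\to0$ uniformly in $n$, as in \eqref{dfn:vep-ell}; that is the entire error analysis.

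By contrast, your Step 1(a) posits a per-level kernel error of size $1+O(1/\sqrt{\ell})$, and your ``main obstacle'' paragraph proposes to tame the aggregated error via mutual absolute continuity of the chain and Bessel path measures near the barrier, invoking \Cref{lem:barrier-est} and \Cref{lem-apriori-c}. Those a-priori lemmas are not used in this proof (they belong to the first- and second-moment computations), and the near-barrier Radon--Nikodym bookkeeping you describe is precisely what you acknowledge you have not carried out, after noting that a naive level-by-level comparison diverges. So the central step of your argument is a genuine gap; the fix is to use the interlacing identities rather than any kernel comparison. Two further mischaracterizations: the support condition on $g$ is only used to include the constraint at $j=n'$ for free (take $k=n'$ in \eqref{eq-Ak}), and positivity of $z$ makes the constraint at $j=0$ free -- neither has to do with a ``sharp Gaussian local CLT regime''; and the Girsanov-type absolute-continuity comparison appears only in the next step of the paper (\Cref{theo-sbarrierbmd}, Bessel versus Brownian motion), not here.
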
 
\begin{proof} Recall from \cite[Lemma 3.1]{BRZ} the time in-homogeneous Markov chain 
\[
(\eta(0)=\corOF{\sqrt{2s}},Y_1,\eta(1),\cdots,Y_{n'},\eta(n'),\cdots) ,
\]
of law $\bQ_1^s$.
 From 
\cite[Lemma 3.1(a)]{BRZ} we have
that $\bQ[g(\eta(j))|Y_j]=\wt g(Y_j)$ of \eqref{dfn:wt-g},
and that $Y_1=\sqrt{2 \LL_1(s)}$ for a $\Gamma(s,1)$-random variable $\LL_1(s)$. 
Set $U_s := \sqrt{2 \LL_1(s)} - \sqrt{2s}$ and note that by \cite[Lemma 3.1(d,e)]{BRZ}, 
the random variables $\{\eta(j), j \ge 0\}$ and $\{Y_j, j \ge 1\}$ have respectively, 
the marginal laws $\bP_{s}$ and $\bP^{Y,s}$.

Standard 
large deviations for Gamma variables yield \eqref{eq-U-sub-gaussian} 
with $\delta<1/2$ (c.f. \cite[(3.13)]{BRZ}).
\corOF{Recall that $(\LL_1(s)-s)/\sqrt{2s} \overset{dist}{\Longrightarrow}  U_\infty$ when $s \to \infty$
(by the \abbr{clt}), hence the same convergence applies 
for $U_s = f_s((\LL_1(s)-s)/\sqrt{2s})$ and $f_s(\cdot)$ of \eqref{eq:f_s}.}
In addition,  setting for $k \in \bN$  the events 
\begin{equation}
\label{eq-Ak}
A_k := \bigcap_{j=0}^{k} 
\left\{ \eta(j) > \bar \varphi_n (j)  \right\}, 
\end{equation}
we have by the preceding and \eqref{dfn:B-kappa}
that the bound  \eqref{18.01d} follows from 
\begin{equation}\label{18.01-eff}
\bQ_{1}^{s} (\, g(\eta(n')); A_{n'} \cap B_{-1} ) \ge  (1-\vep_\ell) \bE_s  (\,  g(\eta(n')); A_{n'} )
\end{equation}
(taking $k=n'$ due to the assumed support of $g(\cdot)$ and including $j=0$ at no loss of 
generality since $z>0$).
Now, recall  from \cite[Lemma 3.1(b)]{BRZ} that 
\begin{align}\label{eq:prod}
\bQ_{1}^{s} (\, g(\eta(n'));\,A_{n'} \cap B_{-1})= \bE_s \big(\, g(\eta(n')) \prod_{j=1}^{n'} F^\eta_j ; \, A_{n'} \big) \,,
\end{align}
where if 
\begin{equation}\label{cond-A}
\sqrt{\eta^2(j-1)/2+\eta^2(j)/2} > \bar \varphi_n(j) \,,
\end{equation}
then by \cite[(3.14)]{BRZ},
\[
F^\eta_j := \bQ_1^s \big(Y_j > 
\bar \varphi_n(j) - \psi_\ell(j) \,\big|\,\eta(j-1),\eta(j)\, \big) \ge 1 -c  e^{-c \psi^2_\ell(j)} \,.
\]
Since  \eqref{cond-A} holds on the event $A_{n'}$  for $j=1,\ldots,n'$
\corON{recalling \eqref{dfn:psi-ell} that $\psi_\ell(j)=\psi_\ell(n'-j)$ and
splitting the product on the \abbr{lhs} of \eqref{eq:prod} to $j > n/2$ and $j \le n/2$,}
yields 
the inequality \eqref{18.01-eff}, and thereby \eqref{18.01d}, with
\begin{equation}\label{dfn:vep-ell}
\vep_\ell = 1 - \prod_{j=0}^\infty \big[1-c e^{- c (j^\delta+h_\ell)^2} \big]^{2}  \,,
\end{equation}
which converge to zero when $\ell \to \infty$. 

Recall from \cite[(3.4)]{BRZ} the notation $\bQ_2^{x^2/2}$ for the law of the Markov chain
$(Y_1,\eta(1),\cdots)$ started at $Y_1=x$.  To see
\eqref{18.02d}, we will show
\begin{equation}
\bQ_{2}^{x^2/2} (   \wt g(Y_{n'}) ;\,A_{n'-1} \cap B_2)
\geq (1-\vep_\ell) \bP^Y_x (\wt g (Y_{n'}) ;\,B_2 ) \,.
\label{120.5d}
\end{equation}
Taking the expectation over $x$ with respect to  the law of $Y_1$ under $Q_1^s$ and arguing as in   the proof of 
 \eqref{18.01d} will then give \eqref{18.02d}.
Turning to establishing \eqref{120.5d}, recall from \cite[Lemma 3.1(c)]{BRZ} that 
\[
\bQ_{2}^{x^2/2} (   \wt g(Y_{n'}) ;\,A_{n'-1} \cap B_2) = \bP_x^Y  (  \wt g(Y_{n'}) \prod_{j=1}^{n'-1} F^Y_j ; B_2) \,,
\]
where by \cite[(3.16)]{BRZ},
\[
F^Y_j := \bQ_2^{x^2/2}(\eta(j) > \bar \varphi_n(j) \,|\, Y_j, Y_{j+1}) \ge 1 - c e^{-c \psi^2_\ell(j)} \,,
\]
provided that
\begin{equation}\label{cond-B'}
\sqrt{Y_j Y_{j+1}} \ge \bar \varphi_n(j) + \psi_\ell(j)  \,.
\end{equation}
On $B_2$, we have  that for any $j < n'$, and all $\ell$ larger than some fixed universal constant,
\[
\sqrt{Y_j Y_{j+1}} > \bar \varphi_n(j+1) + 2 \psi_\ell(j+1) \ge \bar \varphi_n(j) + \psi_\ell(j) \,.
\]
In particular, with \eqref{cond-B'} holding on $B_2$ for any $j \in \{1,\ldots,n'-1\}$, by the same reasoning 
as before, this yields the inequality \eqref{120.5d}, and hence also \eqref{18.02d}. 
\end{proof}
We next estimate the barrier probabilities for 
$\{Y_j\}$
in terms of the law $\bP^W_x$ of a Brownian motion 
$\{W_t\}$, starting at  $W_1=Y_1=x$.  For \corOF{$0 \le T < T'  \le n'$, introduce the events
\begin{equation}\label{dfn-D-psi-T} 
D_{\pm 2 \psi,T,T'} := \{ W_t > \bar \varphi_n(t) \pm 2 \psi_\ell(t),  \;\forall t\in [T+1,T'] \},
\end{equation}
using hereafter $D_{ \kappa h_\ell,T,T'}$
if $\pm 2\psi_\ell(t)$ in \eqref{dfn-D-psi-T} is replaced by
the constant function $\kappa h_\ell$, with abbreviated notation
$D_{\pm 2 \psi,T}$ when $T'=n'-T$ and $D_{\pm 2 \psi}$ for $D_{\pm 2 \psi,0}$.} 
See Figure \ref{fig:DPsi} for a pictorial description. Recall the sets $B_\kappa$, see \eqref{dfn:B-kappa}.
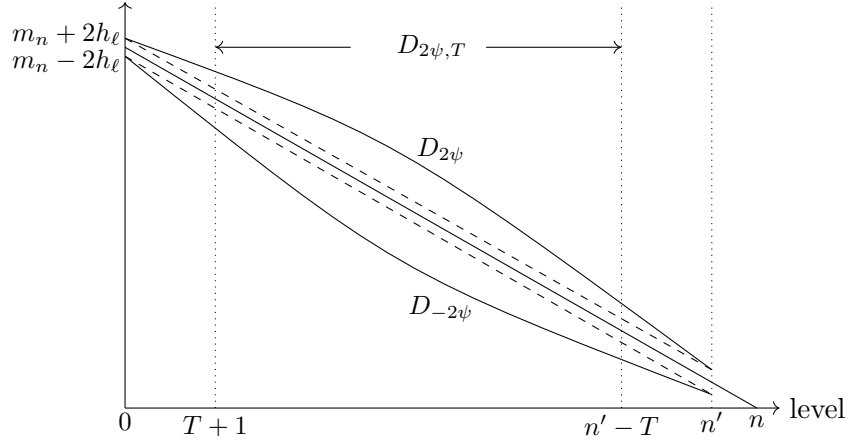
\begin{figure}
\begin{center}
\begin{tikzpicture}[scale=0.6]

\begin{scope}[shift={(-5,0)}]
    \draw [solid,->] (0.0,0) -- (14.5,0) node [ right]  {level};
    \draw [solid,->] (0,0) -- (0,9) node [below left] {};
    \draw[-] (0,7.8) .. controls (6,3) .. (13,0.3);
    \draw[-] (0,8.2) .. controls (6,6) .. (13,0.85);
    \draw[dashed] (0,7.8) -- (13,0.3);
    \draw[dashed] (0,8.2) -- (13,0.85);
  \end{scope}
   \begin{scope}[shift={(-5,0)}]
\draw[-] (0,8) -- (14,0);
 \node at (0,-0.3) {\small $0$};
 \node at (14,-0.3) {\small $n$};
 \node at (13,-0.3) {\small $n'$};
  \node at (-1.3,7.7) {\small $m_n-2h_\ell$};
  \node at (-1.3,8.3) {\small $m_n+2h_\ell$};
   \node at (2,-0.4) {\small $T+1$};
   \node at (11,-0.4) {\small $n'-T$};
     \draw [dotted] (2,0) -- (2,9);
    \draw [dotted] (11,0) -- (11,9);
     \draw [dotted] (13,0) -- (13,9);
     \node at (7,5.7) {\small $D_{2\psi}$};
      \node at (7,2.2) {\small $D_{-2\psi}$};
      \draw[{<[length=3,width=5]}-] (2,8) --(5,8);
      \draw[-{>[length=3,width=5]}] (8,8) -- (11,8) ;
      \node at (6.8,8) {\small $D_{2\psi,T}$};
  \end{scope}


\end{tikzpicture}
\end{center}
\caption{
 The curves in the events $D_{\pm 2h_\ell}$ (dashed lines) and $D_{\pm 2\psi}$ (curved, solid lines). The event $D_{2\psi,T}$ involves the curves between $T+1$ and $n'-T$.
}
\label{fig:DPsi}
\end{figure}
\begin{lemma}\label{theo-sbarrierbmd} 
For $\wh g(w):=\wt g(w)/\sqrt{w}$,
some $\vep_\ell \to 0$,
any
$\wt g(\cdot)$, $n,\ell$ as in \Cref{theo-sbarrierd} and all
$x>0$,
\begin{align}
\label{48.02d}
 \bE^Y_{x} \big(  \wt g (Y_{n'}); B_2   \big) &\ge 
  (1-\vep_{\ell} )  \sqrt{x} \, \bE^W_{x} \big( \wh g(W_{n'}) ; D_{2\psi}  \big),  \\
 \bE^Y_{x} \big( \wt g(Y_{n'}) ;  B_{-1} \big) 
\label{48.01d}
 & \leq (1-\vep_\ell)^{-1}
 \sqrt{x} \, \bE^W_{x} \big( \wh g(W_{n'}) ; D_{-2\psi}  \big). 
\end{align}
\end{lemma}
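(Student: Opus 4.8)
The plan is to compare the $0$-dimensional Bessel process $\{Y_t\}$ with Brownian motion $\{W_t\}$ by Girsanov's theorem, transporting the barrier events along the change of measure and absorbing the discretisation loss into a new sequence $\vep_\ell\to 0$. Recall that $Y$ solves $dY_t = dW_t - \tfrac{1}{2Y_t}\,dt$ up to its absorption at $0$. Applying It\^o's formula to $\log W_t$ and Girsanov's theorem, one finds that, up to the first hitting time of $0$,
\[
\frac{d\bP^Y_x}{d\bP^W_x}\Big|_{\mathcal F_{n'}} = \sqrt{\frac{x}{W_{n'}}}\,\exp\Big\{-\tfrac{3}{8}\int_1^{n'}\frac{dt}{W_t^2}\Big\},
\]
the prefactor being the scale part $s(y)=y$ of $\mathrm{BES}(0)$ and the exponential the Feynman--Kac potential $3/(8y^2)$ that separates $\mathrm{BES}(0)$ from $\mathrm{BES}(3)$. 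Since $\wh g = \wt g/\sqrt{\cdot}$ one has $\wt g(w)\sqrt{x/w} = \sqrt{x}\,\wh g(w)$, so for any path event $\Gamma$ contained in $\{\inf_{[1,n']}\cdot>0\}$,
\[
\bE^Y_x\big(\wt g(Y_{n'});\,\Gamma\big) = \sqrt{x}\,\bE^W_x\Big(\wh g(W_{n'})\,e^{-\frac{3}{8}\int_1^{n'}W_t^{-2}dt};\,\Gamma\Big).
\]

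\textbf{Lower bound \eqref{48.02d}.} I would bound $\bE^Y_x(\wt g(Y_{n'});B_2)$ from below by the smaller event $\{Y_t > \bar\varphi_n(t) + 2\psi_\ell(t)\ \forall t\in[1,n']\}\subseteq B_2$, on which $Y$ never reaches $0$ and which maps under the change of measure exactly onto $D_{2\psi}$. On $D_{2\psi}$ one has $W_t > \bar\varphi_n(t) \ge \rho_\ast(n-t)\ge\rho_\ast\ell$ for $t\le n'=n-\ell$, hence $\int_1^{n'}W_t^{-2}dt \le \rho_\ast^{-2}\int_1^{n'}(n-t)^{-2}dt \le (\rho_\ast^2\ell)^{-1}$, so the Feynman--Kac factor is at least $1-\vep_\ell$ with $\vep_\ell:=3/(8\rho_\ast^2\ell)$. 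This yields \eqref{48.02d}.

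\textbf{Upper bound \eqref{48.01d}.} For $\ell$ large the barrier level $\bar\varphi_n(j)-\psi_\ell(j) = \rho_n(n-j) - h_\ell - j_{n'}^{\delta}$ is positive for all $j\in[1,n']$ (it equals $\rho_n\ell - h_\ell>0$ at $j=n'$, and $j_{n'}^{\delta}=o(n-j)$ while $\rho_\ast>1$ otherwise), so on $B_{-1}$ the absorbed process $Y$ stays positive on $[1,n']$; applying the change of measure, bounding the Feynman--Kac factor by $1$, and dropping the (harmless) positivity restriction on $W$, I get $\bE^Y_x(\wt g(Y_{n'});B_{-1}) \le \sqrt{x}\,\bE^W_x(\wh g(W_{n'}); B^W_{-1})$, where $B^W_{-1}:=\bigcap_{j\le n'}\{W_j>\bar\varphi_n(j)-\psi_\ell(j)\}$ is the discretely-checked barrier. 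It remains to pass from $B^W_{-1}$ to the continuous tube $D_{-2\psi}$. Conditioning on $(W_j)_{j\le n'}$, which measures both $\mathbf 1_{B^W_{-1}}$ and $\wh g(W_{n'})$: on $[j,j+1]$ the bridge can fall below $\bar\varphi_n(\cdot)-2\psi_\ell(\cdot)$ while $W_j,W_{j+1}>\bar\varphi_n-\psi_\ell$ only through a downward fluctuation of size $\ge 2\psi_\ell(j)-\psi_\ell(j)-\rho_n \ge h_\ell + j_{n'}^{\delta} - O(1)$ (as $\bar\varphi_n$ decreases by $\rho_n$ and $\psi_\ell$ changes by $O(1)$ over a unit step), so it has conditional probability $\le c\,e^{-c(h_\ell+j_{n'}^{\delta})^2}$ by the standard Gaussian bridge estimate (cf.\ \cite[Lemma 4.6]{BK}). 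Summing,
\[
\sum_{j=1}^{n'-1} e^{-c(h_\ell+j_{n'}^{\delta})^2} \le 2\,e^{-c h_\ell^2}\sum_{i\ge 0} e^{-c i^{2\delta}} \le \vep_\ell,
\]
uniformly in $n'$. Hence $\bE^W_x(\wh g(W_{n'}); B^W_{-1}\setminus D_{-2\psi}) \le \vep_\ell\,\bE^W_x(\wh g(W_{n'}); B^W_{-1})$, which rearranges to $\bE^W_x(\wh g(W_{n'}); B^W_{-1}) \le (1-\vep_\ell)^{-1}\bE^W_x(\wh g(W_{n'}); D_{-2\psi})$, giving \eqref{48.01d}.

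The hard part is the final step: replacing the discrete-time barrier $B_\kappa$ by the continuous-time tube $D_{\pm 2\psi}$ with an error factor $1\pm o_\ell(1)$ that is \emph{uniform in $n$}. This hinges on the per-unit-interval exceptional probabilities being summable independently of $n'$, which works precisely because the barrier gap $\psi_\ell(j)=h_\ell+j_{n'}^{\delta}$ grows polynomially away from the endpoints (a constant gap $h_\ell$ would not suffice), and it is why one may let $n\to\infty$ first. A secondary, easier point is checking that the Feynman--Kac potential $3/(8Y_t^2)$ is genuinely negligible, which holds because the barrier forces $Y_t\gtrsim\ell\to\infty$ throughout $[1,n']$.
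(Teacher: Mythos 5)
Your proposal is correct and follows essentially the same route as the paper's proof: the Girsanov identity \eqref{Girs.1} for the $0$-dimensional Bessel process, with the Feynman--Kac factor bounded below by $1-\vep_\ell$ on the tube for \eqref{48.02d}, and a discrete-to-continuous barrier comparison (conditioning on the skeleton $(W_j)_{j\le n'}$) for \eqref{48.01d}. The only immaterial difference is that you use a union bound over unit intervals with a Gaussian-bridge dip estimate, where the paper multiplies the exact reflection-principle non-crossing probabilities of the chords (using convexity of $\bar\varphi_n-2\psi_\ell$) to get the factor $1-\vep_\ell$.
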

\begin{proof} Recall that up to the absorption time $\tau_\ast := \inf \{ t >1 : Y_t =0 \}$, the
$0$-dimensional Bessel process satisfies the \abbr{sde} 
\[
Y_{t}=W_{t}-\int_1^t \frac{1}{2Y_{s}} ds \,,  \qquad Y_1 = W_1 = x \,,
\]
with $\{W_{t}\}$ having the Brownian law $\bP^W_x$. Further, the event 
$\{Y_{n'}>0\}$ implies that $\{\tau_\ast > n'\}$, in which case by  Girsanov's theorem  
and monotone convergence, we have that for any bounded $\FF_{n'}$-measurable $Z$,
\be
\bE_{x}^{Y} ( Z;  Y_{n'}>0 )=\bE_{ x}^{W}
\Big(Z 
\sqrt{\frac{x}{W_{n'}}}
\,\,e^{-\frac{3}{8}\int_{1}^{n'} (W_{s})^{-2} ds}\,  ; \;
\inf_{t \in [1,n']}  \{W_{t} \} >0\Big).\label{Girs.1}
\ee
\corON{With $B_2$ containing the event corresponding to $D_{2 \psi}$  for the 
process $Y_t$, we} get \eqref{48.02d} by considering \eqref{Girs.1} for 
$Z=\wt g(W_{n'}) {\bf 1}_{D_{2\psi}}$. Indeed, the event $D_{2\psi}$  implies  
that $\inf_{t \le n'} \{ W_t - \bar \varphi_n(t) \} \ge 0$, hence
\begin{equation*}
e^{-\frac{3}{8} \int_{1}^{n'} (W_{s})^{-2} ds} \ge  
e^{-\frac{3}{8} \int_1^{n'} (n-s)^{-2} ds}  \ge 
e^{-\frac{3}{8 \ell}} := 1 -\vep_\ell 
\end{equation*}
with $\vep_\ell \to 0$ as $\ell \to \infty$.
Next, for all $\ell$ larger than some universal constant,
\[
\inf_{t \in [1,n']} \{ \bar \varphi_n(t) - 2 \psi_\ell(t)  \} = \bar \varphi_n(n') - 2 h_\ell > 0 \,,
\]
and with
$B'  :=  \{  W_j > \bar \varphi_n(j)  -  \psi_{\ell} (j), j = 1,2,\ldots,n'\}$,
it suffices for \eqref{48.01d} to show that  
\begin{align}
\bE^W_x (\wh g(W_{n'}); D_{-2\psi} \cap B' ) \ge (1 - \vep_{\ell} ) \bE^W_{x} ( \wh g(W_{n'});  B' ) \,.
\label{48.01n}
\end{align}
To this end,  since $\phi_t := \bar \varphi_n(t) - 2\psi_\ell(t)$ is a convex function, 
we get upon conditioning on $\{W_1,W_2,\ldots,W_{n'}\}$,  that 
\[
\bE^W_x (\wh g(W_{n'}) ; D_{-2\psi} \cap B') \ge \bE^W_x (\wh g(W_{n'}) \prod_{j=1}^{n'-1} F_j^W; B') \,,
\]
where by the reflection principle (see \cite[(2.1)]{BRZ} or \cite[Lemma 2.2]{Bramson1}), 
\begin{align}\label{eq:FW-bd}
F_j^W &:= \bP^W( \min_{u \in [0,1]} \{W_{j+u} - f_{\phi_j,\phi_{j+1}} (u;1)\} > 0  \,|\, W_j,W_{j+1}) \nn \\
& = 1 - \exp\big(-2 (W_j-\phi_j) (W_{j+1}-\phi_{j+1})\big) \,,
\end{align}
with $f_{a,b}(\cdot;1)$ denoting the line segment between $(0,a)$ and $(1,b)$. On the event
$B'$ we thus have that $F_j^W \ge 1-\exp(-2 \psi_\ell(j) \psi_\ell(j+1))$ for all 
$j \in \{1,2,\ldots,n'-1\}$,
thereby in analogy with \eqref{dfn:vep-ell}, establishing \eqref{48.01n} for 
\[
\vep_\ell = 1 - \prod_{j=0}^\infty  \big[1-e^{- 2 (j^\delta+h_\ell)((j+1)^\delta + h_\ell)} \big]^{2} \,,
\]
which converges to zero as $\ell \to \infty$.
 \end{proof}

 \subsection{ Proof of \Cref{prop-asymptotic-first-moment}}\label{sec-3.5}

Taking $s=s_{n,z}$ yields that 
$W_1=m_n + z + U_s$. For such $W_1$ let
\begin{equation}\label{dfn:alpha-n-pm}
\alpha^{(\pm)}_{n,\ell,z} := z^{-1} e^{c_\ast z} 2^{n'} 
\bE\big[ \sqrt{W_1/W_{n'}} \, \wt \ga_\ell(W_{n'} - c_\ast \ell) {\sf q}^{(\pm)}_{\wt n}
(W_1,W_{n'}) \, \big] \,,
\end{equation}
with $\wt n := n'-1$ denoting our barrier length and 
${\sf q}^{(\pm)}_{\wt n}(x,w) := {\sf q}^{(\pm)}_{\wt n,0}(x,w)$ for 
the corresponding non-crossing probabilities
\begin{equation}\label{dfn:Fpm_T}
{\sf q}^{(\pm)}_{\wt n,T}(x,w) := \bP^W_x(D_{\pm 2 \psi,T} \, | \, W_{n'}=w).
\end{equation}
Combining \eqref{3a.48} with Lemmas \ref{theo-sbarrierd} and \ref{theo-sbarrierbmd} for 
$g(\cdot)=\wh \ga_\ell(\cdot-c_\ast \ell)$ and $\wt g(\cdot) = \wt \ga_\ell (\cdot - c_\ast \ell)$, 
respectively, we have that 
\begin{align*}
 (1-\vep_\ell)^{-2}  \alpha^{(-)}_{n,\ell,z} \ge z^{-1} e^{c_\ast z} \bE_{s_{n,z}} [\Lambda_{n,\ell}] 
 \ge (1 \corOF{-} \vep_\ell)^{2} \alpha^{(+)}_{n,\ell,z} \,.
 \end{align*}
The proof of \Cref{prop-asymptotic-first-moment} thus amounts to showing that for
any $\epsilon > 0$ and all large enough $\ell$,
\begin{equation}\label{3a.52}
(1+\epsilon)^3 \alpha_\ell \ge 
\varlimsup_{z \to \infty}\varlimsup_{n \to \infty}
 \{ \alpha^{(-)}_{n,\ell,z} \}  
\ge 
\varliminf_{z \to \infty} 
\varliminf_{n \to \infty} \{ \alpha^{(+)}_{n,\ell,z} \} 
\ge (1- \epsilon)^3 \alpha_\ell \,.
\end{equation}
To this end, setting $z'=z+U_s$ and 
$W_{n'}=c_\ast \ell + y$, we write \eqref{dfn:alpha-n-pm} explicitly as
\[
\alpha^{(\pm)}_{n,\ell,z} =  \frac{e^{c_\ast z} 2^{n'}}{z \sqrt{2 \pi \wt n}} 
\bE \Big[ \int \wt \ga_\ell(y) dy
\frac{\sqrt{m_n+z'}}{\sqrt{c_\ast \ell+y}} {\sf q}_{\wt n}^{(\pm)} (m_n+z',c_\ast \ell+y) 
e^{-(m_n-c_\ast \ell+z'-y)^2/2 \wt n}  \Big] \,,
\]
with the expectation over $z'$.
Hereafter $\ell \le n/\log n$ so $|c_\ast - \rho_n| \ell \le 1$ and
$D_{\pm 2 \psi}$ imposes heights
$a^{(\pm)}=\rho_n \wt n + b^{(\pm)}$,
$b^{(\pm)} = c_\ast \ell \pm 2 h_\ell$ at barrier end points. Thus, in the 
preceding formula one needs only consider $z'$, $y \ge \pm 2 h_\ell$. 
Recall \eqref{eq-latenight}. With $m_n-c_\ast \ell = c_\ast n'- \vep_{n,n}$, upon setting 
$\Delta_n :=  \frac{1}{2 \wt n}(z'-y+c_\ast-\vep_{n,n})^2$,
we then get similarly to \eqref{eq:jay-ident1} that 
\begin{equation*}\label{eq:new-ident1}
\frac{1}{2\wt n} (c_\ast n' + z'-y - \vep_{n,n})^2 = (n'+1) \log 2 + c_\ast (z'-y - \vep_{n,n}) + \Delta_n \,.
\end{equation*}
Since $c_\ast \vep_{n,n} = \log n$, this
simplifies our formula for $\alpha^{(\pm)}_{n,\ell,z}$ to
\begin{align}\label{eq:alpha-pm-ident}
\alpha^{(\pm)}_{n,\ell,z} &=  \frac{1}{\sqrt{\pi \ell}} \int_{\pm 2 h_\ell}^\infty e^{c_\ast y} \wt \ga_\ell(y) 
  \frac{{\wh f}^{(\pm)}_{n,\ell,z}(y)}{\sqrt{1+y/(c_\ast \ell)}}  dy \,,\\
{\wh f}^{(\pm)}_{n,\ell,z}(y) & := \frac{n}{2 \sqrt{2} z} \bE \Big[ 
\frac{\sqrt{m_n+z'}}{\sqrt{c_\ast \wt n}} {\sf q}_{\wt n}^{(\pm)} (m_n+z',c_\ast \ell+y) e^{-c_\ast U_s} 
e^{-\Delta_n}   \Big] \,. \nn
\end{align}
By our uniform tail estimate \eqref{eq-U-sub-gaussian} for $U_s$ and the tail bound 
\eqref{bd-Pois-for-ofer} on $\wt \ga_\ell(y)$, up to an
error $\vep_n \to 0$ as $n \to \infty$, we 
can restrict the evaluation of ${\wh f}^{(\pm)}_{n,\ell,z}(y)$ to $|z'|+y \le C \sqrt{\log n}$. This forces
$m_n+z' = c_\ast \wt n (1 + \vep_{n})$ and eliminates $\Delta_n$, thereby allowing
us to replace  ${\wh f}^{(\pm)}_{n,\ell,z}(y)$ in \eqref{eq:alpha-pm-ident} by
\begin{equation}\label{eq:f-alt}
f^{(\pm)}_{n,\ell,z}(y) =  \bE \Big[ \frac{e^{-c_\ast U_s}}{\sqrt{2} z}  
\frac{\wt n}{2} {\sf q}_{\wt n}^{(\pm)} (m_n+z',c_\ast \ell+y)  \Big] \,.
\end{equation}
Recalling the events $D_{\kappa h_\ell,T}$, see below \eqref{dfn-D-psi-T}, we further consider the barrier probabilities 
\begin{equation}
  \label{eq-evening2-T}
 \wt {\sf q}_{\wt n,T}^{(\pm)} (x,w) := \bP^W_x (D_{\pm 2 h_\ell,T} \,| \, W_{n'} = w) \,,
\end{equation} 
using the 
abbreviated notation $\wt {\sf q}_{\wt n}^{(\pm)} (x,w) =\wt {\sf q}_{\wt n,0}^{(\pm)} (x,w)$.
\corON{Let  $ \bP_{x \to w}^{[t_1,t_2]}\(A_{m(t)}\)$ denote the probability that the Brownian bridge, taking 
the value $x$ at $t_1$ and  $w$ at $t_2$ remains above the barrier $m(t)$ on the interval $[t_1,t_2]$. 
Recall from \cite[Lemma 2.2]{Bramson1} that for a linear barrier $m(t)$,
\begin{align}\label{eq:lin-barrier-prob-idenBB}
\bP_{x\to w}^{[t_1,t_2]}\(A_{m(t)}\) =  1 - e^{-2 (x-m(t_1))_+(w-m(t_2))_+/(t_2-t_1)}  \,.
\end{align}
It follows that
\begin{align}\label{eq:lin-barrier-prob-iden}
\wt {\sf q}_{\wt n}^{(\pm)} (x,w) =\bP_{x\to w}^{[1,n']}\(A_{\bar \varphi_n(t)\pm 2 h_\ell}\) =  1 - e^{-2 (x-a^{(\pm)})_+(w-b^{(\pm)})_+/\wt n}  \,,
\end{align}
yielding for $x-a^{(\pm)}=z' \mp 2 h_\ell$ 
and $w-b^{(\pm)}= y \mp 2 h_\ell$ which are both $O(\sqrt{\log n})$,  
\begin{align}
\wt {\sf q}_{\wt n}^{(\pm)} (m_n+z',c_\ast \ell + y)  =
\frac{2 + \vep_n}{\wt n} (z' \mp 2 h_\ell) (y \mp 2 h_\ell)\,.
\label{eq:lin-barrier-prob}
\end{align}
Note further that
\begin{equation}\label{eq:Markov}
\wt{\sf q}_{\wt n,T}^{(\pm)} (x,w) = \bE_x^W \big[\bP_{W_{T+1}\to W_{n'-T}}^{[T+1,n'-T]}\(A_{\bar \varphi_n(t)\pm 2 h_\ell}\) | W_{n'} = w \big] \,.
\end{equation}}

The next lemma paraphrases \cite[Proposition 6.1]{Bramson1}
(with the proof given there also yielding the claimed uniformity).
 \begin{lemma} \label{lem-evening1}
For each $\epsilon>0$ there exist $T_\epsilon$, $n_\epsilon$ finite so that,
for any $\ell \ge 0$, $T \in [T_\epsilon, \frac{1}{2} \wt n]$, $x-a^{(\pm)},w-b^{(\pm)} \in [0,\log \wt n]$
and all $\wt n>n_\epsilon$ 
\begin{align}
(1-\epsilon) \,\wt {\sf q}^{(+)}_{\wt n, T}(x,w) &\le     {\sf q}^{(+)}_{\wt n,T}(x,w) 
  \le 
 {\sf q}^{(-)}_{\wt n,T}(x,w)  \leq (1+\epsilon) \,\wt {\sf q}^{(-)}_{\wt n,T}(x,w) \,.  
 \label{eq-evening3}
\end{align}  
\end{lemma}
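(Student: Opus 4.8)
This is precisely Proposition 6.1 of \cite{Bramson1}, so the goal is to adapt that argument to the present (slightly curved) barrier endpoints and to record the uniformity over the stated ranges of $(x,w,\ell)$. The upshot one wants is that, after cutting off a window $[T+1,n'-T]$ at both ends (with $T$ large but fixed), the probability that a Brownian bridge of length $\wt n$ stays above the piecewise-linear curved barrier $\bar\varphi_n(t)\pm 2\psi_\ell(t)$ differs from the probability of staying above the straight barrier $\bar\varphi_n(t)\pm 2h_\ell$ by only a factor $1\pm\epsilon$, once $\wt n$ is large. The reason this should be true is that $\psi_\ell(j)=h_\ell+j_{n'}^\delta$ differs from the constant $h_\ell$ only near the two ends $j\approx 0$ and $j\approx n'$, and the excess $j_{n'}^\delta$ grows only polynomially with small exponent $\delta<1/6$; away from the ends the two barriers coincide, so conditioning on the bridge values at the cut points $T+1$ and $n'-T$ (via \eqref{eq:Markov}) reduces everything to the straight-barrier bridge probability on the middle block, which is common to both sides, times correction factors at the two end blocks $[1,T+1]$ and $[n'-T,n']$.

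\textbf{Steps, in order.} First I would invoke the Markov decomposition \eqref{eq:Markov}, writing ${\sf q}^{(\pm)}_{\wt n,T}(x,w)$ and $\wt{\sf q}^{(\pm)}_{\wt n,T}(x,w)$ as $\bE_x^W$ of the bridge probability on $[T+1,n'-T]$ above the respective barriers, conditionally on $W_{n'}=w$; on that middle interval $\psi_\ell(t)\equiv h_\ell$ provided $T\ge h_\ell^{1/\delta}$, so the two middle-block integrands are literally equal. Hence the only discrepancy comes from the laws of the pair $(W_{T+1},W_{n'-T})$ under the conditioned Brownian motion started from $x$ (and symmetrically ending at $w$) constrained to lie above the two different barriers on $[1,T+1]$ and $[n'-T,n']$. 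Second, I would control these end-block contributions exactly as in \cite[Proposition 6.1]{Bramson1}: the barrier excess over the end blocks is bounded by $2T^\delta$, a fixed constant once $T$ is fixed, so an entropic-repulsion / ballot-type estimate (reflection principle plus Gaussian tail bounds, as in \cite[Lemma 2.2]{Bramson1} and the estimates already used for \eqref{eq:FW-bd}) shows that conditioning on staying above $\bar\varphi_n(t)+2\psi_\ell(t)$ versus above $\bar\varphi_n(t)+2h_\ell$ changes the conditional law of $W_{T+1}$ (resp. $W_{n'-T}$) only by a factor $1+\epsilon(T)$ with $\epsilon(T)\downarrow 0$ as $T\to\infty$, uniformly in $\wt n>n_\epsilon$ and in $x-a^{(\pm)},w-b^{(\pm)}\in[0,\log\wt n]$. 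Third, I would combine the two end-block factors with the common middle factor to get \eqref{eq-evening3}, choosing $T_\epsilon$ so that $(1+\epsilon(T_\epsilon))^2\le 1+\epsilon$ and $n_\epsilon$ from the quantitative error in the ballot estimate. The middle inequality ${\sf q}^{(+)}_{\wt n,T}\le {\sf q}^{(-)}_{\wt n,T}$ is immediate since $D_{+2\psi,T}\subseteq D_{-2\psi,T}$ (a lower barrier is easier to stay above).

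\textbf{Main obstacle.} The delicate point is the uniformity in the endpoint displacements up to $\log\wt n$ (rather than $O(1)$), since the ratio of the two conditional bridge probabilities must be controlled even when the Brownian path is required to start/end at distance as large as $\log\wt n$ above the barrier — this is exactly the regime handled in \cite[Section 6]{Bramson1}, and the argument there (tilting the bridge by its linear interpolation and using that the barrier-excess $T^\delta$ is small compared to the fluctuation scale $\sqrt{T}$ of the bridge over the end block) carries over verbatim because $\delta<\tfrac12$; one simply has to check that replacing Bramson's constant end-barrier by the mildly curved $\bar\varphi_n(t)\pm 2\psi_\ell(t)$ introduces nothing new, which it does not, since $\bar\varphi_n$ is itself linear and the added $\pm 2(t\wedge(n'-t))^\delta$ term is convex and sublinear. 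I would therefore present the proof as a short reduction to \cite[Proposition 6.1]{Bramson1}, pointing out that the cited proof is already uniform in the relevant parameters and that the curved correction $\psi_\ell-h_\ell$ only affects the two end blocks on which it is bounded by the fixed constant $2T^\delta$.

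\begin{proof}[Proof of \Cref{lem-evening1}]
By the strong Markov property of Brownian motion at times $T+1$ and $n'-T$, expressed through \eqref{eq:Markov}, we may write, for either sign,
\[
{\sf q}^{(\pm)}_{\wt n,T}(x,w) = \bE_x^W\big[ \bP^{[T+1,n'-T]}_{W_{T+1}\to W_{n'-T}}\big(A_{\bar\varphi_n(t)\pm 2\psi_\ell(t)}\big) \,\big|\, W_{n'}=w\big],
\]
and the analogous identity for $\wt{\sf q}^{(\pm)}_{\wt n,T}(x,w)$ with the constant barrier $\bar\varphi_n(t)\pm 2 h_\ell$ in place of $\bar\varphi_n(t)\pm 2\psi_\ell(t)$. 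For $T\ge h_\ell^{1/\delta}$ one has $\psi_\ell(t) = h_\ell + (t\wedge(n'-t))^\delta$ with $(t\wedge(n'-t))^\delta \ge h_\ell$ throughout $t\in[T+1,n'-T]$ only at the cost of the sublinear, convex correction; in particular the two barriers differ on $[T+1,n'-T]$ by at most $2(T')^\delta$ over the sub-blocks near the two endpoints and coincide in the bulk.

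The inequality ${\sf q}^{(+)}_{\wt n,T}(x,w)\le {\sf q}^{(-)}_{\wt n,T}(x,w)$ is immediate, since a lower barrier is easier to stay above; the same monotonicity gives $\wt{\sf q}^{(+)}_{\wt n,T}(x,w)\le \wt{\sf q}^{(-)}_{\wt n,T}(x,w)$. It therefore remains to compare ${\sf q}^{(+)}_{\wt n,T}$ with $\wt{\sf q}^{(+)}_{\wt n,T}$ from below, and ${\sf q}^{(-)}_{\wt n,T}$ with $\wt{\sf q}^{(-)}_{\wt n,T}$ from above. Each of these comparisons is precisely the content of \cite[Proposition 6.1]{Bramson1}: decomposing the Brownian path at $T+1$ and $n'-T$, the bridge probability on the middle block $[T+1,n'-T]$ is common to the curved and the straight barrier (the two barriers agree there), so the ratio of the two quantities equals the ratio of the two contributions from the end blocks $[1,T+1]$ and $[n'-T,n']$, on each of which the barrier excess $2((t\wedge(n'-t))^\delta - h_\ell)_+$ is bounded by the fixed constant $2T^\delta$. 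A ballot/reflection estimate (as in \cite[Lemma 2.2]{Bramson1}, used also for \eqref{eq:FW-bd}), applied exactly as in \cite[Section 6]{Bramson1}, shows that changing the end barrier by this fixed amount alters the conditional law of $W_{T+1}$ (resp.\ of $W_{n'-T}$) by a factor $1+\vartheta(T)$ with $\vartheta(T)\downarrow 0$ as $T\to\infty$, uniformly in $\wt n>n_\epsilon$ and over endpoint displacements $x-a^{(\pm)},\,w-b^{(\pm)}\in[0,\log\wt n]$, since $\delta<\tfrac12$ makes $T^\delta$ negligible relative to the bridge fluctuation scale $\sqrt{T}$ over each end block. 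Multiplying the two end-block factors, we obtain
\[
(1-\vartheta(T))^2\,\wt{\sf q}^{(+)}_{\wt n,T}(x,w) \le {\sf q}^{(+)}_{\wt n,T}(x,w), \qquad
{\sf q}^{(-)}_{\wt n,T}(x,w) \le (1+\vartheta(T))^2\,\wt{\sf q}^{(-)}_{\wt n,T}(x,w).
\]
Choosing $T_\epsilon \ge h_\ell^{1/\delta}$ so large that $(1+\vartheta(T_\epsilon))^2\le 1+\epsilon$ and $(1-\vartheta(T_\epsilon))^2\ge 1-\epsilon$, and $n_\epsilon$ as the corresponding threshold from the ballot estimate, yields \eqref{eq-evening3} for all $T\in[T_\epsilon,\tfrac12\wt n]$ and $\wt n>n_\epsilon$.
\end{proof}
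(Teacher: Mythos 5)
Your reduction rests on a misreading of the barrier $\psi_\ell$. By \eqref{psi-def} and \eqref{dfn:psi-ell}, $\psi_\ell(j)=h_\ell+(j\wedge(n'-j))^\delta$, so the curved barrier $\bar\varphi_n(t)\pm 2\psi_\ell(t)$ coincides with the straight barrier $\bar\varphi_n(t)\pm 2h_\ell$ only at the endpoints $t\in\{0,n'\}$, and deviates from it by $2(t\wedge(n'-t))^\delta$, which is \emph{maximal in the middle} of the window (of order $\wt n^{\,\delta}$), not ``bounded by the fixed constant $2T^\delta$ on the end blocks''. Hence the central claim of your decomposition --- that on $[T+1,n'-T]$ the two integrands are ``literally equal'' and the discrepancy comes only from the blocks $[1,T+1]$ and $[n'-T,n']$ --- is false; the situation is exactly the opposite. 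Note also that both events $D_{\pm 2\psi,T}$ and $D_{\pm 2h_\ell,T}$ constrain the path \emph{only} on $[T+1,n'-T]$ (see \eqref{dfn-D-psi-T}), so neither ${\sf q}^{(\pm)}_{\wt n,T}$ nor $\wt{\sf q}^{(\pm)}_{\wt n,T}$ carries any barrier condition on your ``end blocks'', and the two correction factors you multiply in have no counterpart in either quantity. Finally, the choice $T_\epsilon\ge h_\ell^{1/\delta}$ is inadmissible: the lemma demands a single finite $T_\epsilon$ valid for all $\ell\ge 0$, while $h_\ell=\frac12\log\ell\to\infty$; no such choice is needed anyway, since the difference between the two barriers, $2(t\wedge(n'-t))^\delta$, does not involve $\ell$ at all (the $\pm 2h_\ell$ shift is common to both and is absorbed into $a^{(\pm)},b^{(\pm)}$).

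The correct mechanism --- and the one the paper invokes by quoting \cite[Proposition 6.1]{Bramson1} essentially verbatim, noting only that the proof there is uniform in the stated parameters --- is entropic repulsion in the bulk, not a boundary-block comparison: a Brownian bridge conditioned to stay above the straight barrier on $[T+1,n'-T]$, with endpoint displacements in $[0,\log\wt n]$, typically lies at height of order $\sqrt{t\wedge(n'-t)}$ above it, which dominates the excess $(t\wedge(n'-t))^\delta$ because $\delta<\tfrac12$; the cutoff $T$ is what makes this domination effective near the edges of the window, and letting $T\to\infty$ drives the relative error to $0$ uniformly in $\ell$, in the endpoint displacements up to $\log\wt n$, and in $\wt n>n_\epsilon$. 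Of your argument, only the monotonicity ${\sf q}^{(+)}_{\wt n,T}\le {\sf q}^{(-)}_{\wt n,T}$ (and likewise for $\wt{\sf q}$) survives as written; the outer inequalities in \eqref{eq-evening3} require Bramson's entropic-repulsion argument, not the block decomposition you propose.
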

Fixing $\epsilon>0$, we bound separately $f^{(\pm)}_{n,\ell,z}(y)$. Starting with $f^{(-)}_{n,\ell,z}(y)$,
we have from \eqref{eq:f-alt}, using the fact that $  
{\sf q}_{\wt n}^{(-)}\leq  
{\sf q}_{\wt n,T_\epsilon}^{(-)}$ and  the \abbr{rhs} of \eqref{eq-evening3}, that
\begin{eqnarray}\label{eq:f-alt-ub}
f^{(-)}_{n,\ell,z}(y) &\leq &  
 \bE \Big[ \frac{e^{-c_\ast U_s}}{\sqrt{2} z}  
\frac{\wt n}{2} {\sf q}_{\wt n,T_\epsilon}^{(-)} (m_n+z',c_\ast \ell+y)  \Big] 
\nonumber\\
&\leq &
(1+\epsilon) \bE \Big[ \frac{e^{-c_\ast U_s}}{\sqrt{2}z}  
\frac{\wt n}{2} \wt 
{\sf q}_{\wt n,T_\epsilon}^{(-)} (m_n+z',c_\ast \ell+y)  \Big]
\,.
\end{eqnarray}
Turning to evaluate $\wt{\sf q}_{\wt n,T}^{(\pm)} (m_n+z',c_\ast \ell+y)$, we get from 
\corON{ \eqref{eq:Markov}   and \eqref{eq:lin-barrier-prob-idenBB}
that 
\begin{equation}\label{eq:comp-core}
\wt{\sf q}_{\wt n,T}^{(\pm)} (m_n+z',c_\ast \ell+y)   \le
\frac{2} { \wt n-2T} \bE  \Big[ (\bar Z \mp 2 h_\ell)_+ (\bar Y \mp 2 h_\ell)_+ \Big] \,,
\end{equation}}
where $(\bar Z,\bar Y)$ follow the joint Gaussian distribution of 
\[
(W_{T+1} - \bar \varphi_n(T+1),W_{n'-T} - \bar \varphi_n(n'-T))
\]
given $W_1=m_n+z'$ and $W_{n'}=c_\ast \ell+y$.
It is further easy to verify that 
\begin{equation}
\Cov (\bar Z, \bar Y) = T \, \left[ \begin{matrix}  1  & 0 \\
 0 & 1 \end{matrix}\right]  
+ \frac{T^2}{\wt n}  
\left[ \begin{matrix}  -1  & 1 \\
 1 & -1 
\end{matrix}\right]  \,,
\end{equation}
and that 
\[
\bE [ (\bar Z, \bar Y) ] - [ z'+c_\ast + \frac{T}{\wt n} (y-z') , y + \frac{T}{\wt n} (z'-y) ] = o_{\wt n}(1) 
\]
\corOF{independently}
of $(z',y)$, decaying to zero when $\wt n \to \infty$ with $\ell,T$ kept fixed.
From this we get, in view of \eqref{eq:f-alt-ub} and \eqref{eq:comp-core}, that
\[
\varlimsup_{z \to \infty}\varlimsup_{n \to \infty}
\{ f^{(-)}_{n,\ell,z}(y) \}  \leq (1+\epsilon)  (y + 3 h_\ell)
\lim_{z,s \to \infty} \bE \big[ \frac{(z'+3 h_\ell)_+}{\sqrt{2}z}  e^{-c_\ast U_s} \big]
\]
provided $h_\ell \ge c_\ast + \sqrt{T_\epsilon/(2\pi)}$. 
In addition, with ${\mathcal B}=\{|U_s|<z/2\}$, or without such restriction, we get 
thanks to \eqref{eq-U-sub-gaussian}, via dominated convergence that  
\begin{equation}\label{eq:Us-limit}
\lim_{z,s \to \infty}  \bE \big[ {\bf 1}_{\mathcal B}
 \frac{(z' \pm 3 h_\ell)_+}{\sqrt{2} z} e^{-c_{\ast} U_s} \big]
=\frac{1}{\sqrt{2}} \bE \(e^{-c_{\ast}U_\infty}\) 
\corOF{= \frac{1}{\sqrt{2}} e^{c_\ast^2/4} } = 1 \,.
\end{equation}
\corOF{(Recall \Cref{theo-sbarrierd} that $U_\infty \sim N(0,1/2)$.)}
Combined with the previous display, we obtain
\begin{equation}\label{eq:bound-up}
\varlimsup_{z \to \infty}\varlimsup_{n \to \infty}
\{ f^{(-)}_{n,\ell,z}(y) \}  \leq (1+\epsilon)  (y + 3 h_\ell) \,.
\end{equation}
Note that for
$\ell \to \infty$
\begin{equation}
\label{eq-morning1}
\frac{1}{\sqrt{ \pi \ell}} \int_{- 2 h_\ell}^\infty e^{c_\ast y} \wt \ga_\ell(y) 
  \frac{y +3 h_\ell}{\sqrt{1+y/(c_\ast \ell)}}  dy \le \al_\ell (1+ \epsilon)\,.
\end{equation}
(Due to  \eqref{bd-Pois-for-ofer} the contribution to $\al_\ell$ outside 
$[\sqrt{\ell}/(2 r_\ell),2 r_\ell \sqrt{\ell}]$ is negligible, whereas within that interval
$y/\ell \to 0$ and $h_\ell/y \to 0$.) Combining  \eqref{eq:alpha-pm-ident},  \eqref{eq:bound-up} and  \eqref{eq-morning1} yields the \abbr{lhs} of 
\eqref{3a.52}, thereby completing the proof of the upper bound in
\Cref{prop-asymptotic-first-moment}.

Turning next to the lower bound on $f^{(+)}_{n,\ell,z}(y)$, 
we first truncate to $y \in [\sqrt{\ell}/(2r_\ell),\sqrt{\ell} \, 2 r_\ell]$ and restrict to
$z' \in [\frac{z}{2},\frac{3z}{2}]$ via the event ${\mathcal B}$.
Then, taking  $h_\ell \ge 2 (T+1)^\delta$ for 
$T=T_\epsilon$ of \Cref{lem-evening1}, guarantees that 
\[
3 h_\ell  \ge \sup_{t \in [1,T+1] \cap [n'-T,n'] } \{ 2 \psi_\ell(t) \},
\]
for $\psi_\ell(\cdot) \ge h_\ell$ of \eqref{dfn:psi-ell}. This in turn 
implies that \corOF{
\begin{align}\label{eq:event-bd}
D_{2\psi,T} &\subset  D_{2\psi}  
\bigcup (D_{3h_\ell,0,T+1}^c \cap D_{2 h_\ell,T})
\bigcup( D_{2 h_\ell,T} \cap D_{3h_\ell, \wt n - T,n'}^c ) \,,
\end{align}
where $D_{3h_\ell,T,T'}^c$ denotes the event of the Brownian motion 
crossing below the linear barrier in the definition of \corON{$D_{3h_\ell,T,T'}$,} see below \eqref{dfn-D-psi-T}.}
See figure \ref{fig:bessel} for an illustration of these events.
\begin{figure}
\begin{center}
\begin{tikzpicture}[scale=0.65]
\begin{scope}[shift={(-5,0)}]
    \draw [solid,->] (0.0,0) -- (14.0,0) node [ right]  {level};
    \draw [solid,->] (0,0) -- (0,9.8) node [below left] {};
  \draw [dotted] (13,0) -- (13,9);
   \draw [dotted] (2,0) -- (2,9);
    \draw [dotted] (11,0) -- (11,9);
    \draw[solid] (0,8.7)--(2,7.6);
    \draw[solid] (11,2.7)--(13,1.5);
    \draw[-] (0,8.2) .. controls (6,6) .. (13,0.85);
\draw[dashed] (0,8.9) .. controls (2,8.5) and  (4,7.5) .. (6,6.3);
\draw[dashed] (6,6.3) .. controls (8,4) and  (10,5) .. (13,2.6);
\draw[dotted,thick] (0,8.9) .. controls (0.5,8.4) and  (1,7) .. (2,8);
\draw[dotted,thick] (2,8) .. controls (4,9) and (5,4.5) .. (6,5.7);
\draw[dotted,thick] (6,5.7) .. controls (8,6) and  (10,4) .. (13,2.6);
  \end{scope}
   \begin{scope}[shift={(-5,0)}]
\draw[dotted] (0,8.2)--(13,0.85);
\draw[-] (2,7.06)--(11,1.97);
 \node at (13,-0.3) {\small $n'$};
 \node at (0,-0.3) {\small $1$};
  \node at (2,-0.3) {\small $T+1$};
   \node at (11,-0.3) {\small $n'-T$};
  \node at (-1.35,8.2) {\small $m_n+2h_\ell$};
   \node at (-1.45,9.2) {\small $m_n+z'$};
    \node at (-1.35,8.7) {\small $m_n+3h_\ell$};
   \node at (14.0,2.6) {\small $c^\ast \ell+y$};
    \node at (14.3,1.5) {\small $c^\ast \ell+3h_\ell$};
  \node at (14.3,0.8) {\small $c^\ast \ell+2h_\ell$};
  \end{scope}
\end{tikzpicture}
\end{center}
\caption{Depiction of the events $D_{2\psi}$ (dashed curve) and \corOF{$D_{3h_\ell,0,T+1}^c \cap D_{2 h_\ell,T}$} (dotted line).}
\label{fig:bessel}
\end{figure}
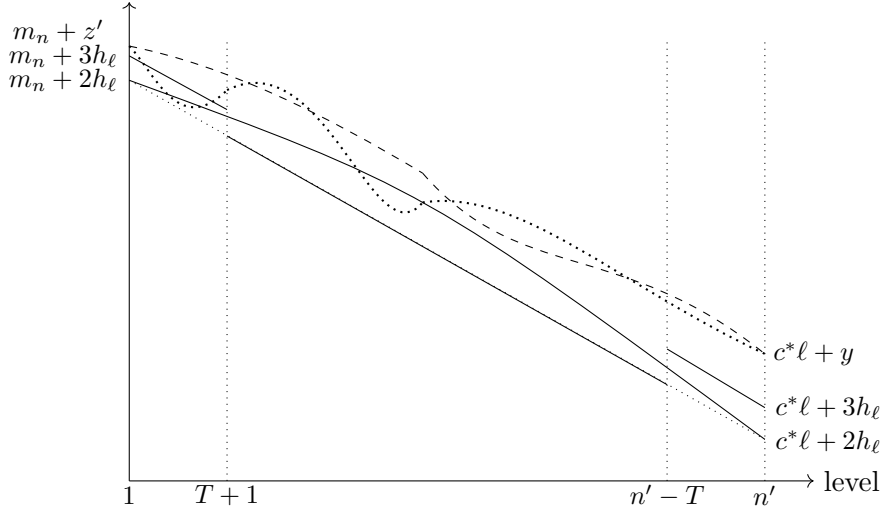
From \eqref{eq:event-bd} and the \abbr{lhs} of \eqref{eq-evening3}
we deduce by the union bound, 
that at $x=m_n+z'$, $w=c_\ast \ell + y$, for all $\wt n$ large enough
\begin{equation}\label{eq:basic-decomp}
{\sf q}^{(+)}_{\wt n} (x,w) \geq (1-\epsilon) \wt {\sf q}^{(+)}_{\wt n,T} (x,w) - 
\wt {\sf q}^{(\downarrow 3h_\ell,+)}_{\wt n,T} (x,w)
- \wt {\sf q}^{(+,\downarrow 3h_\ell)}_{\wt n,T} (x,w) \,,
\end{equation}
where $\wt {\sf q}^{(\downarrow 3h_\ell,+)}_{\wt n,T} (x,w)$ and $\wt {\sf q}^{(+,\downarrow 3h_\ell)}_{\wt n,T} (x,w)$ are the probabilities of the events 
$D_{3h_\ell,0,T+1}^c \cap D_{2 h_\ell,T}$ and $D_{2 h_\ell,T} \cap D_{3h_\ell, \wt n - T,n'}^c$
 under $\bP_x^W(\cdot|W_{n'}=w)$.

Proceeding to evaluating the latter terms,
note that conditional
on $(\bar Z,\bar Y)$ and the given values of $W_1=x$, $W_{n'}=w$,
the events $D_{3h_\ell,0,T+1}$, $D_{2 h_\ell,T}$ and 
$D_{3h_\ell,\wt n-T,n'}$ are mutually independent. Thus, setting
$y'=w - \rho_n \ell \in [y,y+1]$ and  assuming \abbr{wlog} that
$(\sqrt{\ell}/r_\ell) \wedge z \ge  8 h_\ell$, we have from 
\corON{\eqref{eq:lin-barrier-prob-idenBB}, that 
\begin{align*}
\bP(D_{3h_\ell,0, T+1}^c | \, \bar Z, \bar Y) &=1 - \bP_{x\to W_{ T+1}}^{[ 1, T+1]}\(A_{\bar \varphi_n(t)+3 h_\ell}\)= e^{-2(z'+\rho_n-3h_\ell)(\bar Z-3h_\ell)_+/T} \,, \\
\bP(D_{3h_\ell, \wt n - T,n'}^c | \, \bar Z, \bar Y) &=1 - \bP_{W_{n'-T }\to w}^{[n'-T ,n']}\(A_{\bar \varphi_n(t)+3 h_\ell}\)= e^{-2(\bar Y - 3h_\ell)_+(y'-3h_\ell)/T} \,.
\end{align*}}
Combining these identities with \eqref{eq:Markov} and the inequality \eqref{eq:basic-decomp}, we arrive at
\begin{align*}
{\sf q}^{(+)}_{\wt n} (x,w) \geq \bE \Big[  \big( 1-\epsilon -&  e^{-2(z'+\rho_n-3h_\ell)(\bar Z-3h_\ell)_+/T}
- e^{-2(y'-3h_\ell) (\bar Y - 3h_\ell)_+/T} \,  \big) \nn \\
 & \qquad  \qquad
 \corON{\bP_{\bar{Z}+\bar \varphi_n(T+1)\to \bar{Y}+\bar \varphi_n(n'-T) }^{[T+1,n'-T]}\(A_{\bar \varphi_n(t)+ 2 h_\ell}\)}  \Big] \,.
\end{align*}
The first \corOF{factor} on the \abbr{rhs} is at least $-2$ and for all $\ell$ larger than 
some universal $\ell_0(\epsilon)$ it exceeds 
$(1-\epsilon)^2$ on the event $A := \{ \bar Z \wedge \bar Y \ge 4 h_\ell \}$. 
Setting $V:= (\bar Z - 2 h_\ell)_+ (\bar Y - 2 h_\ell)_+$, we combine 
for the second term on the \abbr{rhs} the analog of 
identity \eqref{eq:lin-barrier-prob-iden} with the bound
$1-e^{-a} \in [a-a^2/2,a]$ on $\bR_+$ to arrive at
\[
(\wt n-2T) {\sf q}^{(+)}_{\wt n} (x,w) \ge 2
\bE \Big[ \big\{ (1-\epsilon)^2  - 2 {\bf 1}_{A^c}-\frac{2V}{\wt n - 2 T}\big\} V \Big] \,.
\]
Utilizing \eqref{eq:f-alt}, the uniform tail bounds one has on $(\bar Z-z',\bar Y-y)$ when $\wt n \to \infty$, 
for our truncated range of $z'$ and $y$, followed by \eqref{eq:Us-limit}, we conclude that
\begin{align*}
& \varliminf_{z \to \infty} \varliminf_{n \to \ff} 
\{ f^{(-)}_{n,\ell,z}(y) \}  \geq 
\varliminf_{z \to \infty} \varliminf_{n \to \ff}
 \bE \Big[ {\bf 1}_{\mathcal B} \frac{e^{-c_\ast U_s}}{\sqrt{2}z}  
\frac{\wt n}{2} {\sf q}_{\wt n}^{(+)} (m_n+z',c_\ast \ell+y)  \Big] \nn  \\
& 
\geq (1-\epsilon)^2 (y-3 h_\ell) \lim_{z,s \to \infty}
 \bE \Big[ {\bf 1}_{\mathcal B} \frac{(z'-3 h_\ell)_+}{\sqrt{2}z} e^{-c_\ast U_s} \Big]
 \ge (1-\epsilon)^2 (y - 3 h_\ell) \,.  
\end{align*}
Plugging this into \eqref{eq:alpha-pm-ident} and noting that for $\ell \to \infty$
\begin{equation*}
\frac{1}{\sqrt{\pi \ell}} \int_{\sqrt{\ell}/(2 r_\ell)}^{2 r_\ell \sqrt{\ell}}
 e^{c_\ast y} \wt \ga_\ell(y) 
  \frac{y - 3 h_\ell}{\sqrt{1+y/(c_\ast \ell)}} 
   dy \ge \al_\ell (1 - \epsilon)
\end{equation*}
we arrive at the \abbr{rhs} of \eqref{3a.52}, thereby completing the proof of
\Cref{prop-asymptotic-first-moment}.

\bibliographystyle{plain}

\bigskip
\noindent
\begin{tabular}{lll} 
& Amir Dembo\\
& Department of Mathematics \& Department of Statistics\\
& Stanford University, Stanford, CA 94305\\
& adembo@stanford.edu\\
& &\\
& &\\
& Jay Rosen\\
& Department of Mathematics\\
&  College of Staten Island, CUNY\\
& Staten Island, NY 10314 \\
& jrosen30@optimum.net\\
& &\\
& & \\
 & Ofer Zeitouni\\
& Faculty of Mathematics, 
 Weitzmann Institute and\\
 &Courant Institute, NYU\\
& Rehovot 76100, Israel and NYC, NY 10012 \\
& ofer.zeitouni@weizmann.ac.il
\end{tabular}

\end{document}